\title{
Gromov's Problem: Bound the Expansion Coefficient from below in terms of the Observable Diameter 
of
a Metric Measure Space, 
and its Diameter Bounds
}
\author{Ushio Tanaka\footnote{Department of Mathematics and Information Sciences, Osaka Prefecture University, Osaka 599-8531, Japan. 
\newline
{\itshape E-mail address}: utanaka@mi.s.osakafu-u.ac.jp. 
\newline
The author 
was
supported 
by Grant-in-Aid 
for Young Scientists (B), Japan Society for the Promotion of Science(JSPS), Grant Number 25730022.
}
}
\date{\today}
\chardef\bslash=`\\ 
\newcommand{\ntt}{\normalfont\ttfamily}
\newtheorem{thm}{Theorem}[section]
\newtheorem{cor}[thm]{Corollary}
\newtheorem{prop}[thm]{Proposition}
\newtheorem{example}[thm]{Example}
\theoremstyle{definition}
\newtheorem{defn}{Definition}[section]
\theoremstyle{remark}
\newtheorem{rem}{Remark}[section]
\newtheorem{exercise}{Exercise}
\newtheorem{claim}{Claim}
\newcommand{\R}{\mathbb{R}}
\newcommand\N{\mathbb{N}}
\DeclareMathOperator{\Ric}{\operatorname{Ric}}
\DeclareMathOperator{\ObsDiam}{\operatorname{ObsDiam}}
\DeclareMathOperator{\PartDiam}{\operatorname{PartDiam}}
\DeclareMathOperator{\Expansion}{\operatorname{Exp}}
\DeclareMathOperator{\Var}{\operatorname{Var}}
\DeclareMathOperator{\Lap}{\operatorname{E}}
\DeclareMathOperator{\diam}{\operatorname{diam}}
\DeclareMathOperator{\Sphere}{\mathbb{S}}
\DeclareMathOperator{\Gromov}{\operatorname{\text{Gromov}}}
\DeclareMathOperator{\Ledoux}{\operatorname{\text{Ledoux}}}
\newcommand{\eval}[2][\right]{\relax
  \ifx#1\right\relax \left.\fi#2#1\rvert}
\begin{document}
\maketitle
\markboth{Sample paper for the {\protect\ntt\lowercase{amsmath}} package}
{Sample paper for the {\protect\ntt\lowercase{amsmath}} package}
\renewcommand{\sectionmark}[1]{}
\begin{abstract}
In the celebrated 
book entitled
`Metric Structures for Riemannian and Non-Riemannian Spaces', 
so-called `Green Book', Gromov presented a problem regarding a metric measure space. Gromov posed the question `Bound the expansion coefficient from below in terms of the observable diameter'. 
The overall aim of the current study is to 
demonstrate
the answer to this problem. To 
begin solving 
this problem, 
the concentration of measure phenomenon on 
the metric measure space must be considered. The concentration function to evaluate the measure phenomenon 
is connected by
the observable diameter and the expansion coefficient. Furthermore, the procedure for our answer gives us the upper bound for the expansion coefficient in terms of the observable diameter. 
Combining the desired lower bound for the expansion coefficient with 
its
upper bound, 
we eventually obtain the upper bound for the observable diameter.
Simultaneously, 
this reasoning has enabled
us to obtain the upper bound for the diameter of a bounded metric measure space in terms of the expansion coefficient. 
We 
will
apply the above-mentioned results to a compact connected Riemannian manifold with non-negative Ricci curvature, which makes the bounds 
more explicit. More precisely, they are in terms of the doubling constant of the Riemannian measure and the first non-trivial eigenvalue of the Laplacian on the Riemannian manifold.   
\end{abstract}
\tableofcontents
\section{Introduction}
%
\label{sec:Introduction}
%
%
%
Mikhail Gromov has, in his celebrated 
`Green Book', proposed the following problem, which concerns the expansion coefficient and observable diameter of a metric measure space:
\begin{exercise}[3$\frac{1}{2}$.35 of \cite{RefGromov2001}]
\label{exercise:Gromov}
Bound the expansion coefficient from below in terms of the observable diameter. 
\end{exercise}
%
This paper aims to 
obtain
an answer to 
the above problem
and determine
the novel bounds for the diameter of a bounded metric measure space.
More precisely, we 
will
show that the upper and lower bounds for the diameter are in terms of the expansion coefficient and 
Laplace functional, respectively.

To 
the
best 
of
our knowledge, the expansion coefficient has two proposals: 
one by
Mikhail Gromov and 
the other by Michel Ledoux. 
Needless to say, the expansion coefficient 
stated in Gromov's problem 
has been proposed by
Gromov.  
However,
in the procedure for our calculations, we 
will
exploit Ledoux's expansion coefficient. Therefore, our answer to Gromov's problem is in terms of not only the observable diameter but also Ledoux's expansion coefficient (Theorem~\ref{thm:answer to Gromov}). 

When we apply our answer to a compact connected Riemannian manifold, Ledoux's expansion coefficient makes the lower bound for Gromov's expansion coefficient of the manifold explicit. Explicitly, its lower bound is in terms of the doubling constant of the Riemannian measure and the first non-trivial eigenvalue of the Laplacian on the Riemannian manifold, as well as the observable diameter    (Example~\ref{ex:example of Gromov problem}).      

In order to undertake
Gromov's problem, 
we have paid attention to the concentration of measure phenomenon on a metric measure space. The concentration function enables us to describe the concentration phenomenon. For this reason, previous studies of the concentration phenomenon on the metric measure space have 
centred on 
evaluation of
the concentration function.
Remarkably, the concentration function plays an implicit, 
although pivotal, 
role to bridge 
Gromov's and Ledoux's expansion coefficients and the observable diameter. 

Furthermore, the procedure (Theorems~\ref{thm:bound for diam by Ledoux} and \ref{thm:evaluation of concentration function using Gromov expansion coefficient}) for our answer 
yields additional
by-products. 
These include,
for a metric measure space, 
the lower bound for Ledoux's expansion coefficient of the Riemannian manifold (Corollary~\ref{cor:lower bound for Ledoux's expansion coefficient}), 
the upper bounds for Gromov's expansion coefficient (Corollary~\ref{cor:upper bound for expansion coefficient by Gromov}) and for the observable diameter (Corollary~\ref{cor:upper bound by Ledoux's expansion coefficient for the observable diameter}) and its application to the Riemannian manifold (Example~\ref{ex:result on the upper bounds for observable diameter}) 
and the upper bound for the diameter (Theorem~\ref{thm:bound for diam by Gromov and Ledoux}) and its application to the Riemannian manifold (Example~\ref{ex:upper bound for the diameter of a Riemannian manifold}).

\subsection*{Acknowledgements}
The author would like to thank Shin-ichi Ohta, an associate professor of Kyoto University: Takumi Yokota, an assistant professor of Research Institute for Mathematical Sciences: Dr's.~Yu Kitabeppu and Ryunosuke Ozawa and Mr.~Kohei Suzuki of Kyoto University for their fruitful comments and suggestions. 

\section{Review of materials}
\label{sec:Review of materials}
In this section, for the convenience of the reader, we will briefly summarize some pre-requisite materials of the metric measure spaces for a statement on Gromov's problem and its answer to the problem: the concentration function, the expansion coefficient and the observable diameter. 
It is the crux of the problem to observe the relation between these concepts. The concentration function is a device to connect the expansion coefficient with the observable diameter. These three materials play a significant role in grasping the structure of ambient sources, namely metric measure spaces, when accompanying the concentration of the measure phenomenon. See \cite{RefLedoux2001} and the references therein for further accounts. 


\subsection{Concentration function}
\label{subsec:concentration}
The concentration function on the metric measure spaces describes the concentration of the measure phenomenon on these said spaces. Loosely speaking, the concentration phenomenon occurs 
when a set of 
spaces has 
a
sufficiently large measure, where `most' of the points in the spaces get 
`close' 
to the neighbourhood of the set, which is referred to as an isoperimetric enlargement (called isoperimetric neighbourhood as well; see Definition~\ref{def:isoperimetric enlargement}). The classical isoperimetric inequality in Euclidean space is in terms of isoperimetric enlargement (see \cite[p.~170]{RefLedoux1996} for detailed accounts). 

This
notion 
of utilizing
the concentration function to evaluate the concentration of 
the
measure phenomenon was first introduced in \cite{RefAmir:Milman1980}, which has been formalized in \cite{RefGromov:Milman1983} and further 
analysed
in \cite{RefMilman:Schechtman1986}.
The function relies on two main 
entities: 
a normalized measure
(i.e. probability measure)
and a notion concerning isoperimetric enlargement, with respect to which concentration is evaluated. 

The most significant outcome
for the function is that the measure permits a very small concentration function as the isoperimetric enlargement tends to be `large'. As mentioned in \cite[Section~4]{RefMilman1988}, the chief problem in the investigation of the concentration phenomenon is estimating the concentration function; accordingly Milman shows three techniques including an isoperimetric inequalities approach. It is remarkable that the concentration function may be controlled in a rather large number of cases, especially its central classes---which are those 
that
decay exponentially or Gaussian, which we call exponential concentration or Gaussian (or normal) concentration. We refer the reader to \cite{RefLedoux1996}, \cite{RefLedoux1999} and \cite{RefLedoux2001} for detailed accounts.

\subsection{Observable diameter}
\label{subsec:observable diameter}
The current expository accounts for the observable diameter 
are
principally due to \cite[p.~336]{RefBerger2000} and \cite[Section~1.4]{RefLedoux2001}.

The idea of the observable diameter is to introduce notions corresponding to physical reality and physical experiments, 
and
as mentioned in 
\cite[p.~50]{RefFunano2007} and \cite[p.~105]{RefFunano2010}, due to the quantum and statistical mechanics. Physical reality is 
defined as
a metric space. An object can be observed only by the signals we
can perceive. 
The signals are Lipschitz functions. 
What we perceive, due to the lack of accuracy of our instruments, 
results in
only 
a small error, and the observable diameter is intended to capture this variability. 

The notion concerning the observable diameter can be defined for any geometric concept such as the central radius (the minimal radius of 
a
ball covering the whole metric space)
and
the centre of mass (or barycenter).
A metric and a measure are enough to define such notions. 

Historically, the first estimation of the observable diameter was for standard spheres. As early as 1919, Paul L\'{e}vy studied the concentration phenomenon for the spheres, which 
could
be
described
in terms of the observable diameter; see Example~\ref{ex:Observable diameter by Paul Levy} below for detailed accounts. It is true for Gromov's result for Riemannian manifolds with positive Ricci curvature as well; see Example~\ref{ex:Observable diameter of Riemannian manifold} below for detailed accounts. The contemporary treatment of the observable diameter by Gromov is found to be a `visual' description of the concentration phenomenon. 

The observable diameter may be viewed as 
a dual component of
the concentration function. It describes the diameter of a metric space viewed through a given probability measure on the Borel sets of metric space. 

\subsection{Expansion coefficient}
The notion concerning the expansion coefficient implies a volume ratio of a set of metric measure spaces to its isoperimetric enlargement. The expansion coefficient has two proponents: Gromov (see \cite[Section~{$3\frac{1}{2}.35$}]{RefGromov2001}) and Ledoux (see \cite[Section~1.5]{RefLedoux2001}); accordingly, in the present paper, each of these will be referred to as the
``Gromov's and Ledoux's expansion coefficients,'' respectively.
These two expansion coefficients may be
thought of as working side-by-side. 

It can be inferred
that Ledoux's expansion coefficient is analogous to the so-called \textit{Cheeger isoperimetric constant};
see
\cite[p.32]{RefLedoux2001} and the references therein for detailed accounts. As we will discuss, it is a conspicuous property that when Ledoux's expansion coefficient 
is greater than 1, it gives rise to the above-mentioned exponential concentration.

\section{Concentration of measure phenomenon on metric measure spaces}
\label{sec:Concentration phenomenon}
In this section, using the concentration function, we 
will
work over 
the
concentration of 
the
measure phenomenon on metric measure spaces.  
\subsection{Setup}
We now define the metric measure space in the sense of \cite[Section~1.2]{RefLedoux2001};
also 
refer to
\cite{RefGromov2001} for the pioneering work on the present topic by 
Gromov.
\begin{defn}
A \textit{metric measure space} is defined to be a metric space 
$ (X, d_X) $
equipped with a finite Borel measure $\mu_X$ on 
$(X, d_X)$. We denote it by a triplet $(X, d_X, \mu_X)$, which is often referred to as \textit{mm space} as well. 
\end{defn}
In all what follows, we 
will focus on
$\mu_X (X) = 1$, which permits a stochastic structure of $(X, d_X, \mu_X)$. 
We will
employ the same letter $X$ briefly to designate $(X, d_X, \mu_X)$. 

Let us
now discuss the concentration phenomenon on the metric measure space.
Historically, the concentration of the measure phenomenon was most vigorously put forward by V.D.~Milman in the local theory of Banach spaces in the study of Dvoretzky's theorem on almost Euclidean sections of convex bodies \cite[p.~178]{RefLedoux1996} and 
its
references 
and \cite[p.~14]{RefLedoux:Talagrand1991}. To oscillate the measure on the space dynamically makes it 
capable of
grasping
the 
space
structure.
The dynamic association is exactly the \textit{concentration of the measure phenomenon} on the space. In fact, 
the pertinence of the appellation for 
the concentration phenomenon is due to a concentration inequality on the space; see Proposition~\ref{prop:concentration inequality} and Remark \ref{rem:concentration phenomenon} below.  

As mentioned in \cite[p. 170]{RefLedoux1996}, the concentration phenomenon is rather concerned with the behaviour of `large' isoperimetric enlargement. In fact, the so-called \textit{L\'{e}vy-Gromov isoperimetric inequality} (see Theorem~\ref{thm:Levy-Gromov isoperimetic inequality}) in which the isoperimetric enlargement has a large measure establishes the concentration function. 
\begin{defn}[Isoperimetric enlargement, isoperimetric neighbourhood]
\label{def:isoperimetric enlargement} 
For 
all
$A \subset X$ and for 
all
$r \ge 0$, we define the \textit{isoperimetic enlargement} or 
\textit{isoperimetric neighbourhood} of order $r$ to be
\begin{equation*}
A_r = \{\, x \in X; d_X (x, A) \le r \,\},
\end{equation*}
which is referred to as the $r$-inflation or 
$r$-extension of $A$ with respect to $d_X$ as well.
\end{defn}

\subsection{The L\'{e}vy-Gromov isoperimetric inequality inspires the concentration function}
\label{subsec:Levy-Gromov}
This subsection aims
to find the so-called \textit{L\'{e}vy-Gromov isoperimetric inequality}, 
which is a generalization of L\'{e}vy's isoperimetric inequality; see
\cite[Theorem~2.3]{RefShioya2016} for detailed 
account.
We refer the reader to \cite[Appedix~C]{RefGromov2001} for the L\'{e}vy-Gromov isoperimetric inequality as well as its original issue \cite{RefGromov1980} by 
Gromov.
\begin{thm}[L\'{e}vy-Gromov isoperimetric inequality; e.g., p.~362 of \cite{RefLedoux1990}]
\label{thm:Levy-Gromov isoperimetic inequality}
Let $M$ be an $n \, (\ge 2)$-dimensional compact connected
Riemannian manifold with Ricci curvature $\Ric (M)$ bounded below in terms of
a positive constant. 
Let $\delta > 0$ be a radius of the $n$-dimensional Euclidean sphere ${\Sphere}^n (\delta)$ relative to its intrinsic Riemannian metric such that 
\begin{equation}
\Ric (M) = \Ric ({\Sphere}^n (\delta))\, (= (n-1)/{\delta}^2).
\label{eq:Ricci curvature condition}
\end{equation}
Write $\sigma_{\delta}^n$ for a normalized rotation-invariant measure, namely a normalized Haar measure on ${\Sphere}^n (\delta)$. 
Then, for 
all 
Borel subset $A$ in $M$ and for 
all 
$r > 0$,
\begin{equation*}
\mu_M (A_r) \ge \sigma_{\delta}^n (B_r),
\end{equation*}
where $B$ is a spherical cap of ${\Sphere}^n (\delta)$ such that $\mu_M (A) = \sigma_{\delta}^n (B)$.
\end{thm}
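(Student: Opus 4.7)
The plan is to establish the comparison $\mu_M(A_r) \ge \sigma_\delta^n(B_r)$ through the classical two-step strategy initiated by Gromov: first derive an infinitesimal (perimeter-type) isoperimetric inequality from the Ricci curvature hypothesis \eqref{eq:Ricci curvature condition}, and then integrate this differential inequality up to the enlargement form via an ODE comparison against the model sphere $\Sphere^n(\delta)$.

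First, I would reduce by approximation (e.g.\ by smoothing the distance function to $A$) to the situation in which $A$ has smooth boundary. For such an $A$ the coarea formula yields
\begin{equation*}
\mu_M(A_r) \;=\; \mu_M(A) \;+\; \int_0^r \operatorname{Per}_M(A_s)\,ds,
\end{equation*}
so controlling the enlargement $A_r$ is reduced to bounding the perimeter $\operatorname{Per}_M(A_s)$ of every $s$-enlargement from below in terms of its volume.

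Second---and this is the main obstacle---I would prove the infinitesimal perimeter comparison
\begin{equation*}
\operatorname{Per}_M(E) \;\ge\; I_{\Sphere^n(\delta)}\bigl(\mu_M(E)\bigr)
\end{equation*}
for every sufficiently regular $E \subset M$, where $I_{\Sphere^n(\delta)}$ denotes the isoperimetric profile of the model sphere. The argument studies the normal exponential flow from $\partial E$: the Riccati-type ODE satisfied by the shape operator, combined with \eqref{eq:Ricci curvature condition}, forces the Jacobian of this flow on $M$ to be pointwise dominated by the Jacobian of the normal flow issuing from a spherical cap of matching volume on $\Sphere^n(\delta)$ (this is the Bishop--Gromov / Heintze--Karcher style comparison). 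A symmetrisation step then packages the Jacobian bound into the claimed perimeter inequality. All the delicate comparison-geometric content---second variation, Jacobi field estimates, and the rigidity that makes geodesic caps extremal---is absorbed at this stage.

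Finally, setting $f(r) := \mu_M(A_r)$ and $g(r) := \sigma_\delta^n(B_r)$ with $f(0) = g(0) = \mu_M(A) = \sigma_\delta^n(B)$, the previous two steps supply $f'(r) \ge I_{\Sphere^n(\delta)}(f(r))$ for almost every $r$, while by construction $g$ satisfies $g'(r) = I_{\Sphere^n(\delta)}(g(r))$ identically (spherical caps attain the isoperimetric profile on the model). A standard Gr\"onwall-type ODE comparison then yields $f(r) \ge g(r)$ for all $r \ge 0$, which is precisely the stated inequality. The entire proof thus hinges on the perimeter comparison in the middle step; the approximation reduction in step one and the ODE comparison in step three are essentially routine.
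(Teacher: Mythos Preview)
The paper does not supply its own proof of this theorem: it is quoted as a known background result, with pointers to \cite{RefLedoux1990}, to Appendix~C of \cite{RefGromov2001}, and to the original \cite{RefGromov1980}. Your sketch is a faithful outline of exactly the classical Gromov argument those references contain---reduce to smooth domains, establish the infinitesimal (perimeter) comparison via the Heintze--Karcher/Riccati estimate under \eqref{eq:Ricci curvature condition}, and then integrate to the enlargement form by an ODE comparison against the spherical profile---so there is no discrepancy to report beyond the fact that the paper itself simply cites rather than reproves the statement.

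One small caution on your middle step: the Heintze--Karcher Jacobian bound by itself compares tubular volumes, not perimeters directly; to turn it into the perimeter inequality $\operatorname{Per}_M(E)\ge I_{\Sphere^n(\delta)}(\mu_M(E))$ one typically passes through an isoperimetric minimiser on $M$ (whose existence and regularity use the compactness of $M$) and then applies the Jacobian comparison along normal geodesics from its boundary, as in Gromov's original argument. Your phrase ``a symmetrisation step then packages the Jacobian bound'' is hiding precisely this minimiser-plus-comparison manoeuvre, so if you ever write this out in full make sure that ingredient is explicit.
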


\begin{cor}
\label{cor:Levy}
In particular, if $A \subset M$ has a sufficiently large measure, say $\mu_M (A) \ge 1/2$, then we have, by L\'{e}vy-Gromov isoperimetric inequality, 
\begin{equation}
\mu_M (A_r) \ge 1 - \sqrt{\pi / 8} \exp ( - (n - 1) r^2 / 2 {\delta}^2 ).
\label{eq:Levy-Gromov isoperimetric inequality}
\end{equation}
See \cite{RefMilman1971} for detailed 
account. 

Combining \eqref{eq:Levy-Gromov isoperimetric inequality} with \eqref{eq:Ricci curvature condition} 
yields
\begin{equation}
\mu_M (A_r) \ge 1 - \sqrt{\pi / 8} \exp ( - \Ric (M) r^2 / 2 ).
\label{eq:approx of Levy-Gromov inequality}
\end{equation}
\end{cor}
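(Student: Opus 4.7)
The plan is to derive \eqref{eq:Levy-Gromov isoperimetric inequality} directly from Theorem~\ref{thm:Levy-Gromov isoperimetic inequality} by bounding the spherical isoperimetric profile for caps of measure at least $1/2$, and then to get \eqref{eq:approx of Levy-Gromov inequality} by substituting \eqref{eq:Ricci curvature condition}. First, I would invoke Theorem~\ref{thm:Levy-Gromov isoperimetic inequality} and choose the spherical cap $B \subset \Sphere^n(\delta)$ with $\sigma_{\delta}^n(B) = \mu_M(A) \ge 1/2$. Since the spherical isoperimetric profile is monotone non-decreasing on $[1/2,1]$, one may reduce to the worst case in which $B$ is a closed hemisphere; thus it suffices to estimate $1 - \sigma_{\delta}^n(B_r) = \sigma_{\delta}^n(\Sphere^n(\delta)\setminus B_r)$, a spherical cap of geodesic height $\pi\delta/2 - r$.

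Next, using geodesic polar coordinates centred at the pole opposite to $B$, this complementary measure equals
\begin{equation*}
\frac{\int_{r}^{\pi\delta/2}\cos^{n-1}(t/\delta)\,dt}{\int_{0}^{\pi\delta/2}\cos^{n-1}(t/\delta)\,dt},
\end{equation*}
after the $(n{-}1)$-sphere volume factors in the numerator and denominator cancel. Applying the elementary bound $\cos\theta \le \exp(-\theta^2/2)$ on $[0,\pi/2]$ in the numerator gives $\int_{r}^{\infty}\exp(-(n-1)t^2/(2\delta^2))\,dt = \delta\sqrt{\pi/(2(n-1))}\,\operatorname{erfc}(r\sqrt{(n-1)/(2\delta^2)})/\sqrt{2}$ (up to a standard change of variables), while a direct Laplace-type estimate bounds the denominator below by $\delta\sqrt{2\pi/(n-1)}/2$. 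Taking the quotient produces the prefactor $\sqrt{\pi/8}$ and the Gaussian decay $\exp(-(n-1)r^2/(2\delta^2))$, i.e.\ \eqref{eq:Levy-Gromov isoperimetric inequality}. Finally, \eqref{eq:Ricci curvature condition} substitutes $(n-1)/\delta^2$ by $\Ric(M)$ in the exponent, which is exactly \eqref{eq:approx of Levy-Gromov inequality}.

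The main obstacle is the sharp constant: the factor $\sqrt{\pi/8}$ depends on a careful Gaussian comparison of both the numerator and the denominator, and a too-crude lower bound on the latter would weaken the prefactor. The cleanest route is to invoke \textsc{Milman}'s explicit computation in \cite{RefMilman1971} rather than to re-derive the asymptotics by hand; the author's parenthetical pointer to that reference supports this strategy. The reduction from a general cap of measure $\ge 1/2$ to a hemisphere, and the substitution step yielding \eqref{eq:approx of Levy-Gromov inequality}, are then immediate formalities.
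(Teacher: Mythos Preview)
Your proposal is correct and aligns with the paper's treatment: the paper itself gives no proof of this corollary beyond the pointer to \cite{RefMilman1971} for \eqref{eq:Levy-Gromov isoperimetric inequality} and the one-line substitution of \eqref{eq:Ricci curvature condition} to obtain \eqref{eq:approx of Levy-Gromov inequality}. Your sketch (reduce to the hemisphere via monotonicity of the spherical profile, write the cap measure in geodesic polar coordinates, and compare with a Gaussian integral) is exactly the classical argument behind Milman's computation, and your acknowledgement that the sharp prefactor $\sqrt{\pi/8}$ hinges on a careful two-sided Gaussian comparison---and is best taken from \cite{RefMilman1971}---matches the paper's own deferral.
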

\begin{rem}
We employ the constant 
`$\sqrt{\pi / 8}$', 
which modifies 
`$\sqrt 2$'
that 
appeared in the original article \cite[p.~844]{RefGromov:Milman1983}; 
\cite[eq.\ (2)]{RefLedoux1990}. See \cite{RefLedoux1990} for the 
account
on the Ricci curvature condition.
Historically, Corollary~\ref{cor:Levy} goes back as far as the work of 
L\'{e}vy \cite{RefLevy1951};
also refer to Poincar\'{e} \cite{RefPoincare1912}.  
A simple and heuristic proof of the result of Corollary~\ref{cor:Levy} by 
L\'{e}vy
is given by Ledoux \cite{RefLedoux1992}, to which we refer the reader for the argument due to the heat semigroup and Bochner's formula.
\end{rem}
Put this way, henceforth, we 
will
regard the sphere and Rimannian manifold as a metric measure space naturally.  

Due to the result in
\eqref{eq:approx of Levy-Gromov inequality}, 
the measure of the complement of $A_r$ is bounded above by the Gaussian kernel (Gaussian density) with the Ricci curvature as a coefficient, which will be referred to as the Gaussian (or normal) concentration; see Definition~\ref{def:Gaussian concentration} below. The Gaussian concentration is a significant class of the concentration phenomenon. Therefore, the L\'{e}vy-Gromov isoperimetric inequality 
enables
us to inspire the concept of the concentration function,
which
describes the concentration phenomenon; see Definitions~\ref{def:concentration function} and \ref{def:concentration function with epsilon} below.

As a result,
the concentration of 
the
measure phenomenon for a metric measure space is concerned with two main 
components:
a finite measure, 
such as a 
probability measure on metric measure spaces, and 
the
above-mentioned isoperimetric enlargement,
with respect to which the measure concentration is evaluated.

\begin{defn}[e.g., Section~1.2 of \cite{RefLedoux2001}]
\label{def:concentration function}
We define the \textit{concentration function} ${\alpha}_{( X, d_X, \mu_X )}$ of metric measure spaces $( X, d_X, \mu_X )$ by 
\begin{equation*}
{\alpha}_{(X, d_X, \mu_X )} (r)
:= 
\\
\sup \{\,1 - \mu_X (A_r); X \supset A: \text{Borel set}, \mu_{X} (A) \ge 1/2 \,\} 
\end{equation*}
for 
all
$r \ge 0$.
\end{defn}
In what follows, 
to exhibit an answer to Gromov's problem, we are mostly concerned with 
the
generalization of Definition~\ref{def:concentration function} 
with respect to the lower bound for $A$ as above; see also \cite[Section~1.3]{RefLedoux2001} for the generalization.
\begin{defn}
\label{def:concentration function with epsilon}
Let $0 < \varepsilon < 1$. 
We define the \textit{concentration function} ${{\alpha}^{\varepsilon}}_{( X, d_X, \mu_X )}$
\begin{equation*}
{{\alpha}^{\varepsilon}}_{( X, d_X, \mu_X )} (r) := \sup \{\,1 - \mu_X (A_r); X \supset A: \text{Borel set}, \mu_{X} (A) \ge \varepsilon \,\}
\end{equation*}
for 
all
$r \ge 0$.
\end{defn}
%
Therefore, 
we see that
\begin{prop}
\label{prop:concentration function}
\begin{equation*}
{{\alpha}^{\varepsilon}}_{( X, d_X, \mu_X )} (r) \le {{\alpha}^{1 - \varepsilon}}_{( X, d_X, \mu_X )} (r) \quad 
\mbox{for all $r \ge 0$},
\end{equation*}
provided $\varepsilon \ge 1/2$, and vice versa.
\end{prop}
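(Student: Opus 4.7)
The plan is to derive the inequality as an immediate consequence of the monotonicity of $\varepsilon \mapsto \alpha^{\varepsilon}_{(X,d_X,\mu_X)}(r)$ in the parameter $\varepsilon$, for each fixed $r \ge 0$. Concretely, I would first fix $r \ge 0$ and observe that for any $0 < \varepsilon_{1} \le \varepsilon_{2} < 1$, the family of admissible Borel sets
\begin{equation*}
\mathcal{A}_{\varepsilon_{2}} := \{\, A \subset X \text{ Borel}; \mu_{X}(A) \ge \varepsilon_{2} \,\}
\end{equation*}
is contained in $\mathcal{A}_{\varepsilon_{1}}$, since $\mu_{X}(A) \ge \varepsilon_{2}$ implies $\mu_{X}(A) \ge \varepsilon_{1}$. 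Taking the supremum of the common functional $A \mapsto 1 - \mu_{X}(A_{r})$ over the smaller family $\mathcal{A}_{\varepsilon_{2}}$ gives a value no larger than the supremum over $\mathcal{A}_{\varepsilon_{1}}$, hence
\begin{equation*}
\alpha^{\varepsilon_{2}}_{(X,d_X,\mu_X)}(r) \le \alpha^{\varepsilon_{1}}_{(X,d_X,\mu_X)}(r).
\end{equation*}

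Next, I would instantiate this monotonicity with $\varepsilon_{1} = 1 - \varepsilon$ and $\varepsilon_{2} = \varepsilon$. The hypothesis $\varepsilon \ge 1/2$ is exactly what is needed to guarantee $1 - \varepsilon \le 1/2 \le \varepsilon$, so that the instantiation is admissible and yields the claimed inequality. The ``vice versa'' clause is then handled by the symmetric instantiation $\varepsilon_{1} = \varepsilon$, $\varepsilon_{2} = 1 - \varepsilon$ under the opposite hypothesis $\varepsilon \le 1/2$, which gives the reverse comparison $\alpha^{1-\varepsilon}_{(X,d_X,\mu_X)}(r) \le \alpha^{\varepsilon}_{(X,d_X,\mu_X)}(r)$.

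There is essentially no technical obstacle here: the result is a formal consequence of the definition of the supremum over a monotone family of indexing sets. The only thing to be careful about is keeping the logical direction of the inclusion $\mathcal{A}_{\varepsilon_{2}} \subset \mathcal{A}_{\varepsilon_{1}}$ consistent with the resulting inequality of suprema, and noting that the functional $1 - \mu_{X}(A_{r})$ being supremized does not itself depend on $\varepsilon$, so no further estimate on the integrand is needed.
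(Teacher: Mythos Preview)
Your argument is correct and is exactly the intended one: the paper does not give a separate proof but simply states the proposition as an immediate consequence of Definition~\ref{def:concentration function with epsilon} (``Therefore, we see that''), which is precisely the monotonicity-of-the-supremum observation you spelled out.
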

Since the 
concept
of the concentration phenomenon is attributed to 
the isoperimetric inequality, 
as described above, the concentration function is also referred to as 
an
`isoperimetric constant'.
For further references in this paper,
we 
will
write ${{\alpha}^{\varepsilon}}_{( X, d_X, \mu_X )}$ with $\varepsilon = 1/2$ as ${\alpha}_{(X, d_X, \mu_X )}$ solely.

As mentioned in Subsection~\ref{subsec:concentration}, 
two significant classes of metric measure spaces share the exponential and 
Gaussian upper bounds for the concentration function, 
each of which
is defined as 
follows:
\begin{defn}[cf., \cite{RefLedoux2001}, especially Section~1.2]
\label{def:Exponential concentration}
Let $0 < \varepsilon < 1$. 
A metric measure space $( X, d_X, \mu_X )$ has \textit{exponential concentration} if there exist 
universal numeric
constants $C_i$, $i = 1, 2$ such that   
\begin{equation}
{{\alpha}^{\varepsilon}}_{( X, d_X, \mu_X )} (r) \le C_1 \exp (- C_2 r) \quad 
\mbox{for all $r \ge 0$}.
\label{eq:exponential concentration}
\end{equation}
\end{defn}
%
Milman has obtained necessary and sufficient conditions for Cheeger's isoperimetric and Poincar\'{e} inequalities on a metric measure space in terms of the exponential concentration;
see \cite[Theorem~1.5]{RefMilman2009} for further accounts. 

\begin{defn}[cf., Section~1.2 of \cite{RefLedoux2001}]
\label{def:Gaussian concentration}
Let $0 < \varepsilon < 1$. 
A metric measure space $( X, d_X, \mu_X )$ has \textit{Gaussian} (\textit{normal}) \textit{concentration} if there exist 
universal numeric
constants $C_i$, $i = 1, 2$ such that 
\begin{equation}
{{\alpha}^{\varepsilon}}_{( X, d_X, \mu_X )} (r) \le C_1 \exp (- C_2 r^2) \quad 
\mbox{for all $r \ge 0$}.
\label{eq:Gaussian concentration}
\end{equation}
\end{defn}

\begin{rem}
Gaussian concentration 
yields a sharper
estimation than exponential concentration:
If an arbitrary metric measure space has Gaussian concentration, then it has exponential concentration. 
Indeed, for 
each universal numeric
constant $C_i > 0$, $i = 1, 2$ there exists a constant ${C^{\prime}}_1 > 0$ such that
\begin{equation*}
C_1 \exp (- C_2 r^2) \le {C^{\prime}}_1 \exp (- C_2 r) \quad 
\mbox{for all $r \ge 0$}.
\end{equation*}
\end{rem}

We 
will show
a few examples of the exponential concentration throughout our results; see Theorems~\ref{thm:bound for diam by Ledoux} and \ref{thm:evaluation of concentration function using Gromov expansion coefficient} below. 
As one of the most typical examples of the Gaussian concentration, we give the concentration phenomenon on Euclidean spheres; see Example~\ref{ex:concentration phenomenon on a sphere}. One can deduce that the L\'{e}vy-Gromov isoperimetric inequality in Corollary~\ref{cor:Levy} is in terms of Gaussian concentration:
\begin{example}[e.g., p.~274 of \cite{RefMilman1988} and pp. 362--363 of \cite{RefLedoux1990}]
\label{ex:concentration phenomenon on a sphere}
Let ${\Sphere}^n (\delta)$ be the $n \, (\ge 2)$-dimensional Euclidean sphere of radius $\delta > 0$ equipped with the geodesic distance $d_{{\Sphere}^n (\delta)}$ and the rotation-invariant normalized measure $\mu_{{\Sphere}^n (\delta)}$. Then, $( {\Sphere}^n (\delta), d_{{\Sphere}^n (\delta)}, \mu_{{\Sphere}^n (\delta)} )$ has Gaussian concentration as follows:
\begin{equation*}
\alpha_{( {\Sphere}^n (\delta), d_{{\Sphere}^n (\delta)}, \mu_{{\Sphere}^n (\delta)} )} (r) \le C_1 \exp (- C_2 r^2) \quad 
\mbox{for all $r \ge 0$},
\label{eq:Levy concentration theorem}
\end{equation*}
with $C_1 = \sqrt{\pi / 8}$ and $C_2 = \Ric ({\Sphere}^n (\delta)) / 2 =  (n-1)/2 {\delta}^2$, the latter due to \eqref{eq:Ricci curvature condition}.
\end{example}


Next, we are concerned with the concept concerning the diameter of bounded metric measure spaces. When $X$ is bounded, it is noteworthy for the range 
of
isoperimetric enlargement $r$ in the concentration function to range up to the diameter of $X$, which 
is
denoted by 
\begin{equation*}
\diam (X) := \sup \{\, d_X (x,y); x, y \in X \,\}.
\end{equation*}
In Theorem~\ref{thm:bound for diam by Gromov and Ledoux}, we will address the estimate for $\diam (X)$.

It is conspicuous that the concentration function tends towards 0 as the isoperimetric enlargement is close to $\diam(X)$. As mentioned in \cite{RefLedoux2001}, this, however, will not usually be specified. 
The function result will
decrease rapidly as the enlargement, or the dimension of $X$, is extremely large, and this reflects the concentration phenomenon.
\subsection{Concentration inequality}
In this subsection, we 
will
establish the concentration inequality for ${{\alpha}^{\varepsilon}}_{(X, d_X, \mu_X )}$. 
Please note the
concepts mentioned below.
\begin{defn}
Set a positive real number $\varepsilon < 1$. Let $f$ be a measurable real-valued function on $(X, d_X, \mu_X)$. Define a real number $m_f$ of $f$ for $\mu_X$ such that
\begin{equation*}
\mu_X (\{\, f \le m_f \,\}) \ge \varepsilon, \quad
\mu_X (\{\, f \ge m_f \,\}) \ge 1 - \varepsilon.
\end{equation*}
If one regards $f$ as a random variable, then $m_f$ is referred to as the \textit{quantile of order $\varepsilon$} of $f$ for $\mu_X$ or the \textit{$100 p$ th percentile} of $f$ for $\mu_X$. 
In particular, if $\varepsilon = 1/2$, then $m_f$ exactly coincides with the so-called \textit{L\'{e}vy mean} or \textit{median} of $f$ for $\mu_X$. Note that $m_f$ exists and may not be unique. Nevertheless \cite[p.~21]{RefLedoux:Talagrand1991} shows that the median of the Gaussian kernel (density) for the canonical Gaussian measure on 
an $n$-dimensional Euclidean space
is uniquely determined.  
\end{defn}

As will be shown below, the Lipschitz property on metric spaces $(X, d_X)$, 
involving
Lipschitz function and its Lipschitz constant, 
enables
us to observe the concentration phenomenon on $(X, d_X, \mu_X)$ and to introduce the concept of the observable diameter of $(X, d_X, \mu_X)$; see Section~\ref{sec:Observable diameter}.
\begin{defn}
\label{def:Lipschitz}
We call a real-valued function $f$ on $(X, d_X)$ \textit{Lipschitz} if
\begin{equation*}
\| f \|_{\text{Lip}} := \sup_{x, y \in X; x \neq y} \frac{|f(x) - f(y)|}{d_X (x,y)} < \infty.
\end{equation*}
$\| f \|_{\text{Lip}}$ is referred to as the Lipschitz constant of $f$. In particular, we say that $f$ is 1-Lipschitz if $\| f \|_{\text{Lip}} \le 1$. 
\end{defn}

We are now ready 
to state the concentration inequality for ${{\alpha}^{\varepsilon}}_{(X, d_X, \mu_X )}$: 
\begin{prop}[Concentration inequality]
\label{prop:concentration inequality}
Set a positive real number $\varepsilon < 1$. Let $f$ be a Lipschitz function on $(X, d_X)$, and let $m_f$ be the quantile of order $\varepsilon$.
We have 
\begin{equation}
\mu_X ( \{\, |f - m_f| > r \,\} ) \le {{\alpha}^{\varepsilon}}_{(X, d_X, \mu_X )} (r/\| f \|_{\text{Lip}}) + {{\alpha}^{1 - \varepsilon}}_{(X, d_X, \mu_X )} (r/\| f \|_{\text{Lip}}) 
\label{eq:concentration inequality}
\end{equation}
for 
all
$r \ge 0$.
In particular, if $f$ is 1-Lipschitz, then \eqref{eq:concentration inequality} is given by
\begin{equation}
\mu_X ( \{\, |f - m_f| > r \,\} ) 
\le 
{{\alpha}^{\varepsilon}}_{(X, d_X, \mu_X )} (r) + {{\alpha}^{1 - \varepsilon}}_{(X, d_X, \mu_X )} (r) \quad 
\text{for all $r \ge 0$}.
\label{eq:concentration inequality with 1-Lip}
\end{equation}
\end{prop}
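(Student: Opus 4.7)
The plan is to reduce the two-sided deviation $\{|f-m_f|>r\}$ to two one-sided deviations and apply the definitions of $\alpha^{\varepsilon}$ and $\alpha^{1-\varepsilon}$ to each through a careful choice of the level set whose isoperimetric enlargement controls that side. The only nontrivial ingredient is the observation that the Lipschitz property transports a level-set inclusion into an enlargement inclusion.

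First I would write
\begin{equation*}
\{\,|f-m_f|>r\,\} = \{\,f > m_f + r\,\} \cup \{\,f < m_f - r\,\},
\end{equation*}
so that by subadditivity it suffices to bound each piece separately. For the upper tail I would set $A := \{\,f \le m_f\,\}$. By the definition of the quantile of order $\varepsilon$ we have $\mu_X(A) \ge \varepsilon$. Now the Lipschitz condition gives, for any $x \in A$ and any $y$ with $d_X(x,y) \le r/\|f\|_{\text{Lip}}$, the inequality $f(y) \le f(x) + \|f\|_{\text{Lip}}\, d_X(x,y) \le m_f + r$. Hence
\begin{equation*}
A_{r/\|f\|_{\text{Lip}}} \subset \{\,f \le m_f + r\,\},
\end{equation*}
so that $\{\,f > m_f + r\,\} \subset X \setminus A_{r/\|f\|_{\text{Lip}}}$ and, by Definition~\ref{def:concentration function with epsilon},
\begin{equation*}
\mu_X(\{\,f > m_f + r\,\}) \le 1 - \mu_X(A_{r/\|f\|_{\text{Lip}}}) \le {\alpha^{\varepsilon}}_{(X, d_X, \mu_X)}\!\bigl(r/\|f\|_{\text{Lip}}\bigr).
\end{equation*}

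For the lower tail I would mirror the argument with $B := \{\,f \ge m_f\,\}$, which satisfies $\mu_X(B) \ge 1 - \varepsilon$ by the second quantile condition. The same Lipschitz bound, run in the other direction, yields $B_{r/\|f\|_{\text{Lip}}} \subset \{\,f \ge m_f - r\,\}$, so that $\{\,f < m_f - r\,\} \subset X \setminus B_{r/\|f\|_{\text{Lip}}}$ and therefore $\mu_X(\{\,f < m_f - r\,\}) \le {\alpha^{1-\varepsilon}}_{(X, d_X, \mu_X)}(r/\|f\|_{\text{Lip}})$. Summing the two bounds gives \eqref{eq:concentration inequality}; specialising to $\|f\|_{\text{Lip}} \le 1$ and noting that both $\alpha^{\varepsilon}$ and $\alpha^{1-\varepsilon}$ are non-increasing in $r$ yields \eqref{eq:concentration inequality with 1-Lip} at once.

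I do not expect a serious obstacle: the argument is essentially a repackaging of the definitions. The one place where care is required is in handling the strict versus non-strict inequalities that arise when the quantile is non-unique, since the sets $\{f \le m_f\}$ and $\{f \ge m_f\}$ may overlap and the level sets used to bound the tails are strict. This is harmless because the enlargement $A_r$ is defined with a non-strict inequality, so both inclusions above go through without loss; I would mention this briefly rather than dwell on it.
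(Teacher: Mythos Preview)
Your proposal is correct and follows essentially the same route as the paper: split the two-sided deviation into two one-sided tails, take $A=\{f\le m_f\}$ (respectively $\{f\ge m_f\}$) as the level set whose enlargement controls the upper (respectively lower) tail via the Lipschitz bound, and invoke the definition of $\alpha^{\varepsilon}$ (respectively $\alpha^{1-\varepsilon}$); the $1$-Lipschitz case then follows from monotonicity of the concentration functions. The only cosmetic difference is that the paper phrases the lower-tail step as ``apply the same argument to $\{-f\le -m_f\}$'' and passes to the infimum over $a\in A$ explicitly when verifying the enlargement inclusion, whereas you argue the inclusion pointwise---both are equivalent.
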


\begin{proof}
Set $A: = \{ f \le m_f \}$. 
Then
$\mu_X (A) \ge \varepsilon$ follows from definition $m_f$. For 
all
$r \ge 0$, fix $x \in A_r$. 
One can see that
\begin{equation}
\mu_X (A_r) \le \mu_X (\{\, f \le m_f + \| f \|_{\text{Lip}} \, r \,\}) \quad 
\text{for all $r \ge 0$}.
\label{eq:equation 1 of concentration inequality}
\end{equation}
Indeed, it follows immediately from Definition~\ref{def:Lipschitz} that 
\begin{equation*}
f(x) \le f(a) + \| f \|_{\text{Lip}} \, d_X (x,a) \quad 
\text{for all $x, a \in X$}.
\end{equation*}
Hence, especially for $a \in A$, we 
actually have $f(x) \le m_f + \| f \|_{\text{Lip}} \, d_X (x,a)$. By taking the infimum over $a \in A$, we have, from the definition of $x \in A_r$, $f(x) \le m_f + \| f \|_{\text{Lip}} \, r$. Therefore, we have $x \in \{\, f \le m_f + \| f \|_{\text{Lip}} \, r \,\}$. This implies \eqref{eq:equation 1 of concentration inequality}, namely
\begin{equation*}
\mu_X (\{\, f > m_f + \| f \|_{\text{Lip}} \, r) \,\}) \le 1 - \mu_X (A_r) 
\quad \text{for all $r \ge 0$}. 
\end{equation*}
Hence,
\begin{equation}
\mu_X (\{\, f > m_f + r \,\}) \le {{\alpha}^{\varepsilon}}_{(X, d_X, \mu_X )} (r/\| f \|_{\text{Lip}}) 
\quad \text{for all $r \ge 0$}. 
\label{eq:equation 2 of concentration inequality}
\end{equation}
We call \eqref{eq:equation 2 of concentration inequality} a \textit{deviation inequality}; see \cite[p.~6]{RefLedoux2001}. 

We now apply this argument again, with $A$ replaced by $\{ -f \le -m_f \}$, to obtain    
\begin{equation*}
\mu_X (A_r) \le \mu_X (\{\, f \ge m_f - \| f \|_{\text{Lip}} \, r \,\}).
\end{equation*}
Similarly,
we can see that
\begin{equation*}
\mu_X (\{\, f < m_f - \| f \|_{\text{Lip}} \, r \,\}) \le 1 - \mu_X (A_r) 
\quad \text{for all $r \ge 0$}. 
\end{equation*}
Hence,
\begin{equation}
\mu_X (\{\, f < m_f - r \,\}) 
\le {{\alpha}^{1 - \varepsilon}}_{(X, d_X, \mu_X )} (r/\| f \|_{\text{Lip}}) 
\quad \text{for all $r \ge 0$}, 
\label{eq:equation 3 of concentration inequality}
\end{equation}
where $1-\varepsilon$ is due to 
$\mu_X (A) = \mu_X (\{ f \ge m_f \}) \ge 1 - \varepsilon$.

We conclude from \eqref{eq:equation 2 of concentration inequality} and \eqref{eq:equation 3 of concentration inequality} that
\begin{equation*}
\mu_X (\{\, |f - m_f| > r \,\}) \le 
{{\alpha}^{\varepsilon}}_{(X, d_X, \mu_X )} (r/\| f \|_{\text{Lip}}) + {{\alpha}^{1 - \varepsilon}}_{(X, d_X, \mu_X )} (r/\| f \|_{\text{Lip}}).
\end{equation*}

In particular, if $\| f \|_{\text{Lip}} \le 1$ in \eqref{eq:concentration inequality}, then we see from the fact that ${{\alpha}^{1 - \varepsilon}}_{(X, d_X, \mu_X )}$ 
decreases such
that 
\begin{equation*}
{{\alpha}^{1 - \varepsilon}}_{(X, d_X, \mu_X )} ( r / \| f \|_{\text{Lip}} ) \le {{\alpha}^{1 - \varepsilon}}_{(X, d_X, \mu_X )} (r) \quad \text{for all $r \ge 0$}.
\end{equation*}
Hence, we obtain \eqref{eq:concentration inequality with 1-Lip}. This proves the proposition.
\end{proof}

\begin{prop}[Concentration inequality]
\label{prop:seaquential concentration inequality}
Under the hypotheses of Propositions~\ref{prop:concentration inequality}, 
we get
\begin{equation}
\mu_X (\{\, |f - m_f| > r \,\}) \le 2 {{\alpha}^{1 - \varepsilon}}_{(X, d_X, \mu_X )} (r/\| f \|_{\text{Lip}}) \quad \mbox{if $\varepsilon \ge 1/2$}.
\label{eq:sequential concentration inequality1}
\end{equation}
In particular, if $f$ is 1-Lipschitz, then \eqref{eq:sequential concentration inequality1} 
is given by
\begin{equation}
\mu_X (\{\, |f - m_f| > r \,\}) \le 2 {{\alpha}^{1 - \varepsilon}}_{(X, d_X, \mu_X )} (r) \quad \mbox{if $\varepsilon \ge 1/2$}.
\label{eq:concentration inequality with 1-Lipschitz}
\end{equation}
\end{prop}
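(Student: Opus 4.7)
The plan is to obtain the inequality by a direct combination of Proposition~\ref{prop:concentration inequality} with the monotonicity statement in Proposition~\ref{prop:concentration function}. Concretely, I would start from the bound already established in \eqref{eq:concentration inequality}, namely
\begin{equation*}
\mu_X(\{\,|f-m_f|>r\,\}) \le {\alpha^{\varepsilon}}_{(X,d_X,\mu_X)}(r/\|f\|_{\text{Lip}}) + {\alpha^{1-\varepsilon}}_{(X,d_X,\mu_X)}(r/\|f\|_{\text{Lip}}),
\end{equation*}
which holds for every $r\ge 0$ regardless of the size of $\varepsilon$.

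Next, I would invoke Proposition~\ref{prop:concentration function}. The point is that, when $\varepsilon \ge 1/2$, one has $1-\varepsilon \le 1/2 \le \varepsilon$, so every Borel set $A$ with $\mu_X(A)\ge \varepsilon$ automatically satisfies $\mu_X(A)\ge 1-\varepsilon$. Taking the supremum in Definition~\ref{def:concentration function with epsilon} over the smaller class therefore gives a smaller value; that is,
\begin{equation*}
{\alpha^{\varepsilon}}_{(X,d_X,\mu_X)}(s) \le {\alpha^{1-\varepsilon}}_{(X,d_X,\mu_X)}(s) \quad \text{for all } s\ge 0.
\end{equation*}
Substituting $s = r/\|f\|_{\text{Lip}}$ into the two-term bound and majorizing the first summand by the second immediately yields \eqref{eq:sequential concentration inequality1}.

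Finally, to obtain the 1-Lipschitz specialization \eqref{eq:concentration inequality with 1-Lipschitz}, I would use the fact that ${\alpha^{1-\varepsilon}}_{(X,d_X,\mu_X)}$ is non-increasing in its argument (exactly as exploited at the end of the proof of Proposition~\ref{prop:concentration inequality}): when $\|f\|_{\text{Lip}}\le 1$ we have $r/\|f\|_{\text{Lip}} \ge r$, whence
\begin{equation*}
{\alpha^{1-\varepsilon}}_{(X,d_X,\mu_X)}(r/\|f\|_{\text{Lip}}) \le {\alpha^{1-\varepsilon}}_{(X,d_X,\mu_X)}(r),
\end{equation*}
and the factor of $2$ survives unchanged.

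I do not anticipate any real obstacle here: the statement is essentially a bookkeeping corollary of Propositions~\ref{prop:concentration inequality} and \ref{prop:concentration function}. The only point requiring a moment's care is keeping track of which index ($\varepsilon$ versus $1-\varepsilon$) dominates under the hypothesis $\varepsilon\ge 1/2$, which is exactly what Proposition~\ref{prop:concentration function} handles.
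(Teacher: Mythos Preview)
Your proposal is correct and follows essentially the same route as the paper: the paper's proof says to combine the two one-sided deviation inequalities \eqref{eq:equation 2 of concentration inequality} and \eqref{eq:equation 3 of concentration inequality} with Proposition~\ref{prop:concentration function}, which amounts to exactly what you do (starting instead from their sum \eqref{eq:concentration inequality}), and handles the 1-Lipschitz case identically by referring back to the monotonicity argument at the end of Proposition~\ref{prop:concentration inequality}.
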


\begin{proof}
Combining \eqref{eq:equation 2 of concentration inequality} and \eqref{eq:equation 3 of concentration inequality} with Proposition~\ref{prop:concentration function}, \eqref{eq:sequential concentration inequality1} 
readily
follows.
The verification for the case that $f$ is 1-Lipschitz coincides with that of Proposition~\ref{prop:concentration inequality}. Thus, we have \eqref{eq:concentration inequality with 1-Lipschitz}. This completes the proof.
\end{proof}

\begin{rem}
\label{rem:concentration phenomenon}
One can see from the two aforementioned concentration inequalities that the Lipschitz function is \textit{concentrated} around its L\'{e}vy mean, with 
the 
rate given by the concentration function.
\end{rem}
%
\section{Observable diameter}
\label{sec:Observable diameter}
In this section, we 
will focus on
the observable diameter of metric measure spaces. As mentioned in \cite[Section~1.4]{RefLedoux2001}, 
the
obserbavle diameter 
might work in conjunction with
the concentration function; see e.g., 
Propositions~\ref{prop:Concentration-ObservableDiameter1} and \ref{prop:Concentration-ObservableDiameter2} below.

\subsection{Partial diameter}
To introduce the observable diameter of metric measure spaces $(X, d_X, \mu_x)$, 
we first need to define the partial diameter. 
For a thorough discussion 
on
the observable diameter, we refer the reader to \cite[$3\frac{1}{2}.20$] {RefGromov2001} and \cite[336--337]{RefBerger2000}. 
\begin{defn}[Partial diameter]
Let $\kappa > 0$. 
We call the infimal $D$ such that there exists a subset $A$ of $X$ with $\diam (A) \le D$ and $\mu_X (A) \ge 1 - \kappa$ the \textit{partial diameter} of $X$ with respect to $\mu_X$. We denote by $\PartDiam_{\mu_{X}} ( X; 1 - \kappa )$ the partial diameter.
\end{defn}
\begin{defn}[Lipschitz dominate; Definition~2.10 of \cite{RefShioya2016}]
Let $(X, d_X, \mu_X)$ and $(Y, d_Y, \mu_Y)$ be 
the
metric measure spaces, respectively. We say that $X$ \textit{Lipschitz dominates} $Y$ if there exists a 1-Lipschitz map $f: X \to Y$ such that
\begin{equation*}
f_{\ast} \mu_{X} = \mu_{Y},
\end{equation*}
where $f_{\ast} \mu_X$ stands for the \textit{push-forward measure} of $\mu_X$ by $f$. 
\end{defn}
The following asserts that the partial diameter is monotone for the aforementioned Lipschitz domination:
\begin{prop}[$3\frac{1}{2}.20$ of \cite{RefGromov2001} and Section~1.4 of \cite{RefLedoux2001}]
Suppose that $X$ Lipschitz dominates $Y$.
Then, it follows readily that
\begin{equation*}
\PartDiam_{\mu_Y} ( Y; 1 - \kappa ) \le \PartDiam_{\mu_X} ( X; 1 - \kappa ).
\end{equation*}
\end{prop}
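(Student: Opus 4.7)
The plan is to push a near-optimal set $A \subset X$ realizing the partial diameter of $X$ forward through the 1-Lipschitz map $f$, and verify that its image is a competitor for the partial diameter of $Y$ of the same size and (at most) the same diameter.

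First I would fix any $D > \PartDiam_{\mu_X}(X;1-\kappa)$ and, by definition of the infimum, pick a Borel set $A \subset X$ with $\diam(A) \le D$ and $\mu_X(A) \ge 1-\kappa$. The natural candidate in $Y$ is $f(A)$. Since $f$ is 1-Lipschitz,
\begin{equation*}
d_Y(f(x),f(x')) \le d_X(x,x') \quad \text{for all } x,x' \in A,
\end{equation*}
so $\diam(f(A)) \le \diam(A) \le D$. For the measure bound, the push-forward identity $f_\ast \mu_X = \mu_Y$ gives $\mu_Y(f(A)) = \mu_X(f^{-1}(f(A))) \ge \mu_X(A) \ge 1-\kappa$, provided $f(A)$ is Borel.

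The step I expect to be the only real snag is precisely this measurability: continuous images of Borel sets need not be Borel. I would sidestep the issue by replacing $f(A)$ with its closure $B := \overline{f(A)}$, which is closed (hence Borel), satisfies $\diam(B) = \diam(f(A)) \le D$, and obeys $f^{-1}(B) \supset A$, so that
\begin{equation*}
\mu_Y(B) = \mu_X(f^{-1}(B)) \ge \mu_X(A) \ge 1-\kappa.
\end{equation*}
Thus $B$ witnesses $\PartDiam_{\mu_Y}(Y;1-\kappa) \le D$.

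To finish, I would let $D \searrow \PartDiam_{\mu_X}(X;1-\kappa)$ and conclude $\PartDiam_{\mu_Y}(Y;1-\kappa) \le \PartDiam_{\mu_X}(X;1-\kappa)$. The argument uses only the two defining properties of Lipschitz domination (1-Lipschitz continuity controls the diameter, push-forward equality transfers the mass), together with the closure trick to keep everything inside the Borel $\sigma$-algebra; no further structural assumptions on $X$ or $Y$ are needed.
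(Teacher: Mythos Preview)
Your argument is correct and is precisely the standard verification the paper has in mind: the paper does not supply a proof at all, stating only that the inequality ``follows readily'' from the definitions, so there is nothing to compare against beyond the push-forward-the-witness idea you carried out. Your closure trick to avoid the Borel-image issue is a nice touch that the paper does not mention (its definition of partial diameter in fact just says ``subset,'' leaving the measurability point implicit).
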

%
%
\subsection{Observation device for diameter}
What is not obvious 
is that the partial diameter may dramatically decrease under all 1-Lipschitz maps from a metric measure space to a certain metric space. We 
will
now call the target metric space the \textit{screen}; see \cite[$3\frac{1}{2}.20$]{RefGromov2001} and \cite[Section~1.4]{RefLedoux2001}. 
Denoting 
the screen set 
as 
a 1-dimensional Euclidean space $\R$
provides us with more geometric view to concentration. The geometric observation device itself can 
also
be the observable diameter. 
In actuality,
the observable diameter permits us to describe the diameter of a metric measure space viewed through a given Borel probability measure on the space. 

Incidentally, Naor et al. have discussed a class of metric measure space whose observable diameter is much smaller than its diameter, which is sometimes (following Milman) referred to as a ``small isoperimetric constant''; see \cite{RefNaor2005} for detailed accounts.

\begin{defn}[Observable diameter]
\label{def:Observable diameter}
We define the ($\kappa$-)\textit{observable diameter} of $( X, d_X, \mu_X )$ with respect to $\mu_X$, denoted by $\ObsDiam ( X; - \kappa )$, to be the supremum of $\PartDiam_{f_{\ast} \mu_X} ( \R; 1 - \kappa )$ over each 
$f_{\ast} \mu_X$, namely 
\begin{multline*}
\ObsDiam ( X; - \kappa ) 
\\
:= \sup \{ \PartDiam_{f_{\ast} \mu_X} ( \R; 1 - \kappa ); \text{1-Lipschitz function}~f: X \to \R \},
\end{multline*}
where the supremum is taken over all $f$. 
\end{defn}

\begin{rem}
\label{rem:Observable diameter}
According to \cite[p.~336]{RefBerger2000} and \cite[$3\frac{1}{2}$.20]{RefGromov2001}, the observable diameter is usually rather insensitive to 
a positive real number $\kappa < 1$.
Actually, Gromov suggests 
setting
$\kappa = 10^{-10}$. 
Therefore, one may employ the notation $\ObsDiam (X)$ simply for the observable diameter  $\ObsDiam ( X; - \kappa )$.
\end{rem}
%
To facilitate access to the observable diameter, we shall briefly review \cite[pp.~336--337]{RefBerger2000}, \cite[Section~3$\frac{1}{2}$.20]{RefGromov2001} and \cite[Section~1.4]{RefLedoux2001}.
Taken
from a physical point of view, one ascribes the idea of 
observable diameter to the notions corresponding to 
`physical reality' 
and 
`physical experiments'. 
Indeed, we may actually take the physical reality, namely configuration space to be a metric space $( X, d_X )$. 
We think of $\mu_X$ on $( X, d_X )$ as a `state' on the configuration space. 1-Lipschitz functions $f$ on a metric measure space $(X, d_X, \mu_X)$ behave 
like the signals; more precisely, 
`observable', 
namely an observation device giving us the visual (tomographic) image on the screen $\R$; see Corollary \ref{cor:observable diameter}. Thus, with the naked 
eye,
one can view the state via the observable $f_{\ast} \mu_X$ on the screen and cannot identify a part of the screen of measure (luminosity) less than a positive real number $\kappa < 1$. 
\begin{prop}
Suppose that $X$ Lipschitz dominates $Y$. Then, it follows that
\begin{equation*}
\ObsDiam ( Y; - \kappa ) \le \ObsDiam ( X; - \kappa ).
\end{equation*}
\end{prop}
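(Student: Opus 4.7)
The plan is to show that every 1-Lipschitz test function $g : Y \to \R$ appearing in the supremum that defines $\ObsDiam ( Y; - \kappa )$ can be pulled back, via the dominating map, to an admissible test function for $\ObsDiam ( X; - \kappa )$ whose push-forward measure on the screen $\R$ agrees with $g_* \mu_Y$. Once this is in place, the inequality follows by taking suprema.

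More concretely, first invoke the hypothesis to fix a 1-Lipschitz map $f : X \to Y$ with $f_{\ast} \mu_X = \mu_Y$. Then, for an arbitrary 1-Lipschitz function $g : Y \to \R$, form the composition $g \circ f : X \to \R$; since the Lipschitz constant is submultiplicative, $g \circ f$ is again 1-Lipschitz, so it is an admissible competitor for the supremum defining $\ObsDiam ( X; - \kappa )$. Next, use the functoriality of push-forward:
\begin{equation*}
(g \circ f)_{\ast} \mu_X \;=\; g_{\ast} ( f_{\ast} \mu_X ) \;=\; g_{\ast} \mu_Y ,
\end{equation*}
so that
\begin{equation*}
\PartDiam_{(g \circ f)_{\ast} \mu_X} ( \R; 1 - \kappa ) \;=\; \PartDiam_{g_{\ast} \mu_Y} ( \R; 1 - \kappa ) .
\end{equation*}

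To conclude, note that the left-hand side is bounded above by $\ObsDiam ( X; - \kappa )$ by the very definition of the observable diameter, and this bound holds uniformly in $g$. Taking the supremum over all 1-Lipschitz $g : Y \to \R$ yields
\begin{equation*}
\ObsDiam ( Y; - \kappa ) \;\le\; \ObsDiam ( X; - \kappa ) ,
\end{equation*}
as claimed. The argument is essentially a bookkeeping consequence of the definitions together with the elementary fact that the composition of 1-Lipschitz maps is 1-Lipschitz; there is no genuine obstacle here, and the only point that requires a moment of care is confirming that the supremum defining $\ObsDiam(Y; -\kappa)$ ranges over a class that injects, via precomposition with $f$, into the class defining $\ObsDiam(X; -\kappa)$ without changing the push-forward on the screen.
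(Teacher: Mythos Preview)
Your proof is correct and is the standard argument for this monotonicity. The paper itself does not give a proof here but simply refers the reader to \cite[Proposition~2.18]{RefShioya2016}; your argument is essentially what one finds there, so in effect you have supplied the details the paper omits.
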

\begin{proof}
See \cite[Proposition~2.18]{RefShioya2016}.
\end{proof}
\subsection{Duality between the concentration function and the observable diameter}
In this subsection, we 
will
discuss the duality between the concentration function and the observable diameter as follows:
%
%
\begin{prop}[Proposition~1.12 of \cite{RefLedoux2001}]
\label{prop:Concentration-ObservableDiameter1}
Let $\kappa > 0$ be small.
Then, we have
\begin{equation*}
\ObsDiam ( X; - \kappa ) \le 2 \inf \{ r > 0; {\alpha}_{(X, d_X, \mu_X )} (r) \le \kappa /2 \}.
\end{equation*}
\end{prop}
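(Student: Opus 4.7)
The plan is to fix an arbitrary 1-Lipschitz function $f: X \to \R$ and to show that for every $r > 0$ with ${\alpha}_{(X, d_X, \mu_X)}(r) \le \kappa/2$, the push-forward measure $f_\ast \mu_X$ places mass at least $1 - \kappa$ inside an interval of length $2r$. Taking the supremum over $f$ and the infimum over admissible $r$ will then yield the stated bound. The role of the Lévy mean is to provide the centre of this interval.

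First, I would let $m_f$ be a Lévy mean of $f$ (i.e.\ the quantile of order $\varepsilon = 1/2$). Since $f$ is 1-Lipschitz, I would invoke the concentration inequality \eqref{eq:concentration inequality with 1-Lip} with $\varepsilon = 1/2$; recalling the abbreviation ${\alpha}_{(X, d_X, \mu_X)} = {\alpha}^{1/2}_{(X, d_X, \mu_X)}$, this gives
\begin{equation*}
\mu_X(\{\,|f - m_f| > r\,\}) \le {\alpha}^{1/2}_{(X, d_X, \mu_X)}(r) + {\alpha}^{1/2}_{(X, d_X, \mu_X)}(r) = 2\,{\alpha}_{(X, d_X, \mu_X)}(r).
\end{equation*}

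Second, under the hypothesis ${\alpha}_{(X, d_X, \mu_X)}(r) \le \kappa/2$, this estimate becomes $\mu_X(\{\,|f - m_f| > r\,\}) \le \kappa$, equivalently
\begin{equation*}
\mu_X\bigl(f^{-1}([m_f - r, m_f + r])\bigr) = \mu_X(\{\,|f - m_f| \le r\,\}) \ge 1 - \kappa.
\end{equation*}
Hence $f_\ast \mu_X([m_f - r, m_f + r]) \ge 1 - \kappa$, and since the enclosing interval in $\R$ has diameter $2r$, the definition of the partial diameter yields $\PartDiam_{f_\ast \mu_X}(\R; 1 - \kappa) \le 2r$.

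Third, since the bound $2r$ is independent of the chosen 1-Lipschitz function $f$, taking the supremum over all such $f$ in Definition~\ref{def:Observable diameter} gives $\ObsDiam(X; -\kappa) \le 2r$. Finally, taking the infimum over all $r > 0$ with ${\alpha}_{(X, d_X, \mu_X)}(r) \le \kappa/2$ produces the claimed inequality. The argument is essentially routine once the concentration inequality is in hand; the only subtlety, and therefore the main point to check, is that the choice $\varepsilon = 1/2$ in Proposition~\ref{prop:concentration inequality} legitimately converts the two-sided deviation bound into a bound in terms of the symmetric concentration function ${\alpha}_{(X, d_X, \mu_X)}$, and that existence (but not uniqueness) of the median $m_f$ is enough for the interval-centring step.
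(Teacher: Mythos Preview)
Your proof is correct and follows essentially the same route as the paper's argument (given for the more general Proposition~\ref{prop:Concentration-ObservableDiameter2} and attributed to Ledoux's Proposition~1.12): pick $r$ with $\alpha_{(X,d_X,\mu_X)}(r)\le\kappa/2$, apply the concentration inequality \eqref{eq:concentration inequality with 1-Lip} at $\varepsilon=1/2$ to get $f_\ast\mu_X$-mass $\ge 1-\kappa$ on a set of diameter $\le 2r$, and then pass to the partial and observable diameters. The only cosmetic difference is that the paper takes the image set $A=f(\{|f-m_f|\le r\})$ whereas you use the enclosing interval $[m_f-r,m_f+r]$; both have diameter at most $2r$, so the conclusion is the same.
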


The following assertion is still true:
\begin{prop}
\label{prop:Concentration-ObservableDiameter2}
Set a positive real number $\varepsilon < 1$. 
We have 
for a positive real number $\kappa < 1$
\begin{equation}
\ObsDiam ( X; - \kappa ) \le 2 \inf \{ r > 0; {{\alpha}^{\varepsilon}}_{(X, d_X, \mu_X )} (r) + {{\alpha}^{1 - \varepsilon}}_{(X, d_X, \mu_X )} (r) \le \kappa \}.
\label{eq:upper bound for the observable diameter}
\end{equation}
In particular,
\begin{equation}
\ObsDiam ( X; - \kappa ) \le 2 \inf \{ r > 0; {{\alpha}^{\varepsilon}}_{(X, d_X, \mu_X )} (r) \le \kappa/2 \}, \quad \varepsilon \le 1/2. 
\label{eq:sequential Concentration-ObservableDiameter}
\end{equation}
\end{prop}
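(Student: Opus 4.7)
The plan is to derive the displayed bound by plugging 1-Lipschitz test functions into the concentration inequality already proved as Proposition~\ref{prop:concentration inequality}, then reading off the resulting estimate on the push-forward side through the definition of the observable diameter. The second, ``in particular'' assertion should then drop out of the first by monotonicity of $\alpha^{\varepsilon}_{(X,d_X,\mu_X)}$ in $\varepsilon$, recorded in Proposition~\ref{prop:concentration function}.

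For the main inequality \eqref{eq:upper bound for the observable diameter}, I would fix an $r > 0$ with $\alpha^{\varepsilon}_{(X,d_X,\mu_X)}(r) + \alpha^{1-\varepsilon}_{(X,d_X,\mu_X)}(r) \le \kappa$, take an arbitrary 1-Lipschitz $f\colon X \to \R$, and choose a quantile $m_f$ of order $\varepsilon$ for $f$ with respect to $\mu_X$. Then inequality \eqref{eq:concentration inequality with 1-Lip} of Proposition~\ref{prop:concentration inequality} gives
\begin{equation*}
\mu_X(\{\,|f - m_f| > r\,\}) \le \alpha^{\varepsilon}_{(X,d_X,\mu_X)}(r) + \alpha^{1-\varepsilon}_{(X,d_X,\mu_X)}(r) \le \kappa.
\end{equation*}
Setting $I := [m_f - r, m_f + r] \subset \R$, this rewrites as $f_\ast\mu_X(I) = \mu_X(\{\,|f - m_f| \le r\,\}) \ge 1 - \kappa$, and $\diam(I) = 2r$. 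Hence $\PartDiam_{f_\ast\mu_X}(\R;\,1 - \kappa) \le 2r$. Taking the supremum over all 1-Lipschitz $f$, Definition~\ref{def:Observable diameter} yields $\ObsDiam(X;\,-\kappa) \le 2r$, and then taking the infimum over all admissible $r$ produces \eqref{eq:upper bound for the observable diameter}.

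For the particular case \eqref{eq:sequential Concentration-ObservableDiameter}, when $\varepsilon \le 1/2$ the monotonicity recorded in Proposition~\ref{prop:concentration function} gives $\alpha^{1-\varepsilon}_{(X,d_X,\mu_X)}(r) \le \alpha^{\varepsilon}_{(X,d_X,\mu_X)}(r)$ (the ``vice versa'' side of that proposition), so whenever $\alpha^{\varepsilon}_{(X,d_X,\mu_X)}(r) \le \kappa/2$ one has $\alpha^{\varepsilon}_{(X,d_X,\mu_X)}(r) + \alpha^{1-\varepsilon}_{(X,d_X,\mu_X)}(r) \le \kappa$. Hence the infimum in \eqref{eq:sequential Concentration-ObservableDiameter} dominates the one in \eqref{eq:upper bound for the observable diameter}, and \eqref{eq:sequential Concentration-ObservableDiameter} is immediate.

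There is not much of a technical obstacle, since the heavy lifting already lives in Proposition~\ref{prop:concentration inequality}. The only small point to handle carefully is that the quantile $m_f$ need not be unique; one must verify that the argument goes through for \emph{any} admissible choice, which it does because the estimate on $\mu_X(\{\,|f - m_f| > r\,\})$ holds uniformly in the choice of $m_f$. The conceptual step worth emphasizing is that the right candidate for the partial-diameter witness on the screen $\R$ is precisely the centred window $[m_f - r, m_f + r]$, so that the median (or quantile) of the observable controls both tails simultaneously via the two terms $\alpha^{\varepsilon}$ and $\alpha^{1-\varepsilon}$.
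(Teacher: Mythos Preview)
Your proposal is correct and follows essentially the same route as the paper's proof: pick an admissible $r$, apply the 1-Lipschitz concentration inequality \eqref{eq:concentration inequality with 1-Lip} to an arbitrary 1-Lipschitz $f$, exhibit a subset of $\R$ of diameter $2r$ carrying mass at least $1-\kappa$, and then pass to the supremum over $f$ and the infimum over $r$; the particular case is likewise handled via Proposition~\ref{prop:concentration function}. The only cosmetic difference is that the paper uses the image $A = f(\{\,|f-m_f|\le r\,\})$ as the witness set while you use the enclosing interval $[m_f-r,\,m_f+r]$, which is if anything a slightly cleaner choice.
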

The argument of the current proof is due to that of Proposition~1.12 of \cite{RefLedoux2001}:
\begin{proof}
For a small $\kappa > 0$, pick $r > 0$ 
such that 
\begin{equation}
{{\alpha}^{\varepsilon}}_{(X, d_X, \mu_X )} (r) + {{\alpha}^{1 - \varepsilon}}_{(X, d_X, \mu_X )} (r) \le \kappa.
\label{eq:kappa condition}
\end{equation}
Let $f$ be a 1-Lipschitz function on $X$. Set $A := f ( \{\, x \in X; |f(x) - m_f| \le r \,\} )$, where $m_f$ is the quantile of order $\varepsilon$ of $f$ for $\mu_X$, 
i.e. it
satisfies $\mu_X ( \{\, f \le m_f \,\} ) \ge \varepsilon$ and $\mu_X ( \{\, f \ge m_f \,\} ) \ge 1 - \varepsilon$. 
To see the observable diameter, observe that
\begin{align*}
f_{\ast} \mu_X (A)
& \ge \mu_X ( \{\, x \in X; |f(x) - m_f| \le r \,\} ) 
\\
& \ge 1 - \left( {{\alpha}^{\varepsilon}}_{(X, d_X, \mu_X )} (r) + {{\alpha}^{1 - \varepsilon}}_{(X, d_X, \mu_X )} (r) \right),
\end{align*}
where we have used \eqref{eq:concentration inequality with 1-Lip} in the last inequality.
Further, from \eqref{eq:kappa condition}, we have
\begin{equation}
f_{\ast} \mu_X (A) \ge 1 - \kappa.
\label{eq:evaluation for pushforward measure}
\end{equation}
Furthermore, it turns out that 
\begin{equation}
\diam (A) 
\le
( m_f + r )  - ( m_f - r ) 
=
2r.
\label{eq:evaluation for diam}
\end{equation}
Thereby, adding \eqref{eq:evaluation for pushforward measure} and \eqref{eq:evaluation for diam}, we obtain
\begin{equation*}
\PartDiam_{f_{\ast} \mu_X} ( \R; 1 - \kappa ) \le 2r,
\end{equation*}
from which \eqref{eq:upper bound for the observable diameter} follows. 

For the remainder of this paper,
on account of Proposition~\ref{prop:concentration function}, we can 
select
$1/2$ as a threshold for $\varepsilon > 0$, 
which
appeared in Definition~\ref{def:concentration function with epsilon}. By combining the aforementioned argument with Proposition~\ref{prop:concentration function}, we eventually see that
\begin{gather*}
\ObsDiam ( X; - \kappa ) \le 2 \inf \{ r > 0; {{\alpha}^{1 - \varepsilon}}_{(X, d_X, \mu_X )} (r) \le \kappa/2 \} \quad \mbox{if $\varepsilon \ge 1/2$};
\\
\ObsDiam ( X; - \kappa ) \le 2 \inf \{ r > 0; {{\alpha}^{\varepsilon}}_{(X, d_X, \mu_X )} (r) \le \kappa/2 \} \quad \mbox{if $\varepsilon \le 1/2$}.
\end{gather*}
In consequence, we obtain \eqref{eq:sequential Concentration-ObservableDiameter} as desired.
\end{proof}

On account of Propositions~\ref{prop:Concentration-ObservableDiameter1} and \ref{prop:Concentration-ObservableDiameter2}, which 
imply
the duality between the concentration function and the observable diameter, the upper bound for the concentration function 
enables
us to control the observable diameter. While the following corollary is fairly straightforward, it plays a crucial role in giving the answer to the current Gromov's problem. 
\begin{cor}
\label{cor:observable diameter}
If $X$ has exponential concentration \eqref{eq:exponential concentration}, 
then we have by 
\eqref{eq:sequential Concentration-ObservableDiameter}
\begin{equation*}
\ObsDiam ( X; - \kappa ) \le \frac{2}{C_2} \ln {\frac{2 C_1}{\kappa}}, \quad \kappa > 0,
\end{equation*}
where each universal numeric constant $C_i > 0$, $i = 1, 2$ has already appeared in \eqref{eq:exponential concentration}.  

If $X$ has Gaussian concentration \eqref{eq:Gaussian concentration}, 
then we have by 
\eqref{eq:sequential Concentration-ObservableDiameter}
\begin{equation*}
\ObsDiam ( X; - \kappa ) \le 2 \sqrt{\frac{1}{C_2} \ln {\frac{2 C_1}{\kappa}}}, \quad \kappa > 0,
\end{equation*}
where each universal numeric constant $C_i > 0$, $i = 1, 2$ has already appeared in \eqref{eq:Gaussian concentration}.  
\end{cor}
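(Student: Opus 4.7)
The plan is to apply the duality inequality \eqref{eq:sequential Concentration-ObservableDiameter} from Proposition~\ref{prop:Concentration-ObservableDiameter2} in the regime $\varepsilon \le 1/2$, which reads
\[
\ObsDiam(X;-\kappa) \le 2 \inf\{\, r > 0 ;\ {\alpha^{\varepsilon}}_{(X,d_X,\mu_X)}(r) \le \kappa/2 \,\},
\]
and then to invert the hypothesized tail bound on $\alpha^{\varepsilon}$ so as to compute this infimum explicitly. Since Definitions~\ref{def:Exponential concentration} and \ref{def:Gaussian concentration} are postulated for an arbitrary $0<\varepsilon<1$, one may in particular take some $\varepsilon \le 1/2$, and hence the hypothesis needed to trigger \eqref{eq:sequential Concentration-ObservableDiameter} is automatically in place.

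For the exponential case I would substitute ${\alpha^{\varepsilon}}_{(X,d_X,\mu_X)}(r) \le C_1 e^{-C_2 r}$ into the infimum and solve $C_1 e^{-C_2 r} \le \kappa/2$ for $r$; taking logarithms yields $C_2 r \ge \ln(2 C_1/\kappa)$, so the infimal admissible $r$ equals $C_2^{-1}\ln(2 C_1/\kappa)$, and doubling produces the first stated bound. For the Gaussian case, the analogous inversion of $C_1 e^{-C_2 r^2} \le \kappa/2$ gives $r^2 \ge C_2^{-1}\ln(2 C_1/\kappa)$, so the infimal admissible $r$ is $\sqrt{C_2^{-1}\ln(2 C_1/\kappa)}$, and doubling produces the second.

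The argument is thus a routine logarithmic inversion and presents no real analytic obstacle. The only point requiring care is the correct invocation of Proposition~\ref{prop:Concentration-ObservableDiameter2}: one must use the simpler form \eqref{eq:sequential Concentration-ObservableDiameter} valid for $\varepsilon \le 1/2$ rather than the additive bound \eqref{eq:upper bound for the observable diameter}, because the exponential/Gaussian hypotheses here control only a single $\alpha^{\varepsilon}$, not the sum $\alpha^{\varepsilon}+\alpha^{1-\varepsilon}$. Once this choice is fixed, the two displayed estimates follow directly and the corollary is established.
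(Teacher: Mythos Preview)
Your proposal is correct and matches the paper's intended approach: the paper does not spell out a proof but embeds the argument in the statement itself via the reference to \eqref{eq:sequential Concentration-ObservableDiameter}, and your inversion of the exponential and Gaussian tail bounds is exactly the routine computation the paper deems ``fairly straightforward.'' The only minor imprecision is phrasing the inverted value as ``the infimal admissible $r$'': strictly, $C_2^{-1}\ln(2C_1/\kappa)$ is an upper bound for $\inf\{r>0: \alpha^{\varepsilon}(r)\le \kappa/2\}$ (since the hypothesis gives only $\alpha^{\varepsilon}(r)\le C_1 e^{-C_2 r}$), but that is precisely what is needed for the stated inequality.
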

\begin{rem}[cf. p.~15 of \cite{RefLedoux2001}]
The significant parameter $C_2$ 
that
appeared in Corollary~\ref{cor:observable diameter} implies the exponential decay of the concentration function.
\end{rem}
Corollary~\ref{cor:observable diameter} 
further
facilitates access to the observable diameter.
Examples follow below:
\begin{example}[cf. Section~1.1 and p.~15 of \cite{RefLedoux2001}]
\label{ex:Observable diameter by Paul Levy}
Combining Corollary~\ref{cor:observable diameter} with Example~\ref{ex:concentration phenomenon on a sphere} shows 
that the observable diameter of the $n\, (\ge 2)$-dimensional Euclidean sphere of radius $\delta$, 
namely
${\Sphere}^n (\delta)$ is of the 
order 
$n^{-1/2}$; more precisely, 
\begin{equation*}
\ObsDiam ( {\Sphere}^n (\delta); - \kappa ) \le 2 \delta \sqrt{\frac{2}{n-1} \ln \sqrt{\frac{\pi}{2}} \frac{1}{\kappa}}, \quad \kappa > 0,
\end{equation*}
from which it follows that
\begin{equation*}
\ObsDiam ( {\Sphere}^n (\delta); - \kappa ) = O ( n^{-1/2} ), \quad n \to \infty. 
\end{equation*}
Specifically, the observable diameter of the unit sphere is given explicitly; see \cite{RefShioya2016} for further accounts.
\end{example}
\begin{example}
\label{ex:Observable diameter of Riemannian manifold}
Let $M$ be an $n \, (\ge 2)$-dimensional compact connected Riemannian manifold with Ricci curvature $\Ric (M) \ge K$ for some constant $K > 0$. 
Applying Corollary~\ref{cor:observable diameter} to Corollary~\ref{cor:Levy}, we get
\begin{equation*}
\ObsDiam ( M; - \kappa ) \le 2 \sqrt{\frac{2 (n-1)}{K n} \ln \sqrt{\frac{\pi}{2}} \frac{1}{\kappa}}, \quad \kappa > 0.
\end{equation*}
Now, in Example~\ref{ex:result on the upper bounds for observable diameter}, we 
will
derive the upper bound for $\ObsDiam ( M; - \kappa )$ in terms of the first non-trivial eigenvalue of the Laplacian on $M$ 
to be stated in Subsection~\ref{subsec:Application to a compact Riemannian manifold} later, 
and the doubling constant of the Riemannian measure of $M$ 
to be stated in Appendix~\ref{appendix:Appendix A} later.
\end{example}
\begin{example}
Let $M$ be a compact connected Riemannian manifold. 
Applying Corollary~\ref{cor:observable diameter} to Theorem~\ref{thm:Gromov-Milman} 
to be shown later, 
we get
\begin{equation*}
\ObsDiam ( M; - \kappa ) \le \frac{2 \ln (3/2 \kappa)}{\ln (3/2) \sqrt{\lambda_1 (M)}}, \quad \kappa > 0.
\end{equation*}
\end{example}

\section{Expansion coefficients}
\label{sec:Expansion Coefficient}
The expansion coefficient of metric measure spaces is proposed by Gromov and Ledoux independently; see \cite[3$\frac{1}{2}$.35]{RefGromov2001} and \cite[Section~1.5]{RefLedoux2001}, respectively. 
Before stating the 
proof
results on the expansion coefficient, 
let us correct their statements of its definition.

Gromov has defined the expansion coefficient to be the infimum of real numbers $e \ge 1$ such that, if $\mu_X (A) \ge \varepsilon$ 
for all 
$A \subset X$, then it follows that $\mu_X (A_{\rho}) \ge e \varepsilon$ for $\rho > 0$.
In contrast,
Ledoux has defined it to be the infimum of real numbers $e \ge 1$ such that, if $\mu_X (B_{\rho}) \le 1/2$ 
for all 
$B \subset X$, then it follows that $\mu_X (B_{\rho}) \ge e \mu_X (B)$ for $\rho > 0$.

In the work that follows,
utilizing the proposal by both
Gromov and Ledoux, we shall distinguish these two expansion coefficients: 
$\Expansion_{\Gromov}$ and $\Expansion_{\Ledoux}$; see below-mentioned Definitions \ref{def:Expansion coefficient by Gromov} and \ref{def:Expansion coefficient by Ledoux} for their definitions.
Nevertheless, $\inf$ in both the above definitions 
is not correct because
$\Expansion_{\Gromov} = \Expansion_{\Ledoux} \equiv 1$. 
Therefore, $\inf$ should be substituted by $\sup$; see Definitions \ref{def:Expansion coefficient by Gromov} and \ref{def:Expansion coefficient by Ledoux}.
\subsection{Gromov's expansion coefficient and its properties}
\begin{defn}[Erratum for Gromov's Expansion coefficient; see 3$\frac{1}{2}$.35 of \cite{RefGromov2001}]
\label{def:Expansion coefficient by Gromov}
Set a positive real number $\varepsilon < 1$. 
We define \textit{Gromov's expansion coefficient} of $\mu_X$ on $(X,d_X)$ of order $\rho > 0$ to be
\begin{equation}
\Expansion_{\Gromov} ( X; \varepsilon, \rho ) := \sup \{e \ge 1; \mu_X (A_{\rho}) \ge e \varepsilon, X \supset A: \text{Borel set}, \mu_X (A) \ge \varepsilon \}.
\label{eq:expansion coefficient by Gromov}
\end{equation}
It turns out 
from \eqref{eq:expansion coefficient by Gromov} that 
\begin{equation}
\mu_X (A_{\rho}) \ge \Expansion_{\Gromov} ( X; \varepsilon, \rho ) \varepsilon.
\label{eq:property of expansion coefficient by Gromov}
\end{equation}
\end{defn}
The following asserts that Gromov's expansion coefficient is monotone for Lipschitz maps:
\begin{prop}[cf. $3\frac{1}{2}$.35 of \cite{RefGromov2001}]
\label{prop:monotone expansion coefficient by Gromov}
Let $f$ be a Lipschitz map between $(X, d_X, \mu_X )$ and $(Y, d_Y, f_{\ast} \mu_{X})$. 
Then, we have
\begin{equation*}
\Expansion_{\Gromov} ( X; \varepsilon, \rho / \| f \|_{\text{Lip}} ) \le \Expansion_{\Gromov} ( Y; \varepsilon, \rho ).
\end{equation*}
In particular, if $X$ Lipschitz dominates $Y$, then
it 
instantly
follows 
that
\begin{equation*}
\Expansion_{\Gromov} ( X; \varepsilon, \rho ) \le \Expansion_{\Gromov} ( Y; \varepsilon, \rho ).
\end{equation*}
\end{prop}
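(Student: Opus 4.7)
The plan is to transfer the Borel sets witnessing the expansion coefficient on $Y$ back to $X$ via the preimage, with the Lipschitz constant of $f$ rescaling the appropriate inflation radius. First I would record the equivalent reformulation
\[
\Expansion_{\Gromov}(Z;\varepsilon,\rho) \;=\; \frac{1}{\varepsilon}\inf\bigl\{\mu_Z(C_\rho);\ C\subset Z\text{ Borel},\ \mu_Z(C)\ge\varepsilon\bigr\},
\]
which is immediate from Definition~\ref{def:Expansion coefficient by Gromov} together with the trivial inequality $\mu_Z(C_\rho)\ge\mu_Z(C)\ge\varepsilon$ guaranteeing that the constraint $e\ge 1$ is compatible with the supremum. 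With this rewriting, the target inequality becomes
\[
\inf_{\mu_X(A)\ge\varepsilon}\mu_X\bigl(A_{\rho/\|f\|_{\mathrm{Lip}}}\bigr) \;\le\; \inf_{f_*\mu_X(B)\ge\varepsilon}\, f_*\mu_X(B_\rho).
\]

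To verify this, fix any Borel $B\subset Y$ with $f_*\mu_X(B)\ge\varepsilon$ and put $A := f^{-1}(B)$, which is Borel because the Lipschitz map $f$ is continuous. The push-forward identity gives $\mu_X(A)=f_*\mu_X(B)\ge\varepsilon$, so $A$ is admissible on the left. The geometric heart of the proof is the inclusion
\[
A_{\rho/\|f\|_{\mathrm{Lip}}}\ \subset\ f^{-1}(B_\rho):
\]
for $x$ in the left-hand side pick $a\in A$ with $d_X(x,a)\le\rho/\|f\|_{\mathrm{Lip}}$; then $d_Y(f(x),f(a))\le\|f\|_{\mathrm{Lip}}\,d_X(x,a)\le\rho$ together with $f(a)\in B$ yield $f(x)\in B_\rho$. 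Applying $\mu_X$ and invoking the push-forward identity gives $\mu_X(A_{\rho/\|f\|_{\mathrm{Lip}}})\le f_*\mu_X(B_\rho)$, and taking the infimum over $B$ closes the main assertion.

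For the ``in particular'' statement, Lipschitz domination supplies a $1$-Lipschitz $f\colon X\to Y$ with $f_*\mu_X=\mu_Y$, so $\|f\|_{\mathrm{Lip}}\le 1$ and therefore $\rho\le\rho/\|f\|_{\mathrm{Lip}}$. Combined with the obvious monotonicity $\rho_1\le\rho_2 \Rightarrow \Expansion_{\Gromov}(X;\varepsilon,\rho_1)\le\Expansion_{\Gromov}(X;\varepsilon,\rho_2)$ (immediate from $A_{\rho_1}\subset A_{\rho_2}$), this chains into the desired estimate
\[
\Expansion_{\Gromov}(X;\varepsilon,\rho)\;\le\;\Expansion_{\Gromov}(X;\varepsilon,\rho/\|f\|_{\mathrm{Lip}})\;\le\;\Expansion_{\Gromov}(Y;\varepsilon,\rho).
\]
The main obstacle I anticipate is purely bookkeeping: reconciling the original ``$\sup\{e\ge 1;\ldots\}$'' formulation with the inf-of-measure formulation so that the preimage construction applies cleanly, and checking the degenerate case $\|f\|_{\mathrm{Lip}}=0$ (in which $Y$ reduces to a point and the claim is vacuous). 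Once that is in hand, everything collapses to a one-line Lipschitz estimate together with the definition of the push-forward measure.
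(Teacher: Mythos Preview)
Your proof is correct and rests on the same geometric ingredient as the paper's, namely the Lipschitz inclusion (the paper isolates it as Claim~\ref{claim:Lipschitz}: $A_{r/\|f\|_{\mathrm{Lip}}}\subset f^{-1}((f(A))_r)$, which specializes to your $(f^{-1}B)_{\rho/\|f\|_{\mathrm{Lip}}}\subset f^{-1}(B_\rho)$). The only cosmetic difference is that you pull back Borel sets $B\subset Y$ to $f^{-1}(B)\subset X$ and compare infima, whereas the paper pushes $A\subset X$ forward to $f(A)\subset Y$ and manipulates the $\sup$-definition directly; your route is slightly cleaner since $f^{-1}(B)$ is automatically Borel.
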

\begin{proof}
The following claim makes it allowable to evaluate the inequalities above. The verification of the claim is straightforward:
\begin{claim}
\label{claim:Lipschitz}
If $f$ is a Lipschitz map, then
\begin{equation*}
A_{r/\| f \|_{\text{Lip}}} \subset f^{-1} ( (f(A))_r ) 
\quad \mbox{for all $r \ge 0$}.
\end{equation*}
\end{claim}
We have
\begin{align*}
&\Expansion_{\Gromov} ( X; \varepsilon, \rho / \| f \|_{\text{Lip}} ) 
\\
& \le \sup \{\, e \ge 1; f_{\ast} \mu_{X} ((f(A))_{\rho}) \ge e \varepsilon, \mu_X (A) \ge \varepsilon \,\}
\quad \mbox{by Claim~\ref{claim:Lipschitz}}
\\
& \le \sup \{\, e \ge 1; f_{\ast} \mu_{X} ((f(A))_{\rho}) \ge e \varepsilon, f_{\ast} \mu_{X} (f(A)) \ge \varepsilon \,\}
\\
& = \Expansion_{\Gromov} ( Y; \varepsilon, \rho ),
\end{align*}
as required.  
\end{proof}
\subsection{Ledoux's expansion coefficient and its properties}
\begin{defn}[Erratum for Ledoux's Expansion coefficient; see Remark~\ref{rem:Ledoux expansion coefficient}]
\label{def:Expansion coefficient by Ledoux}
Set a positive real number $\varepsilon < 1$. 
We define \textit{Ledoux's expansion coefficient} of $\mu_X$ on $(X,d_X)$ of order $\rho > 0$ to be
\begin{multline}
\Expansion_{\Ledoux} ( X; \varepsilon, \rho ) 
\\
:= \sup \{e \ge 1; \mu_X (B_{\rho}) \ge e \mu_X (B), X \supset B: \text{Borel set}, \mu_X (B_{\rho}) \le \varepsilon \}.
\label{eq:expansion coefficient by Ledoux}
\end{multline}
\end{defn}
\begin{rem}
\label{rem:Ledoux expansion coefficient}
Ledoux has originally proposed the expansion coefficient with $\varepsilon = 1/2$; see Section~1.5 of \cite{RefLedoux2001}. As we 
mentioned 
previously
in Section~\ref{sec:Introduction}, 
if $M$ is 
a compact Riemannian manifold, then $\Expansion_{\Ledoux} ( M; 1/2, \rho )$ is analogous to Cheeger's isoperimetric constant; 
see
\cite{RefCheeger1970} for further accounts. In fact, from the viewpoint of expander graphs, Ledoux discusses the relation between Ledoux's expansion coefficient with $\varepsilon = 1/2$ and Cheeger's isoperimetric constant; see \cite[pp.~31--32]{RefLedoux2001} and the reference therein.
\end{rem}
It turns out 
from \eqref{eq:expansion coefficient by Ledoux} 
that
\begin{equation} 
\mu_X (B_{\rho}) \ge \Expansion_{\Ledoux} ( X; \varepsilon, \rho ) \mu_X (B).
\label{eq:expansion coefficient by Ledoux property}
\end{equation}
If $B$ is such that $\mu_X (B_{k \rho}) \le \varepsilon$ for some integer $k \ge 1$, then 
\eqref{eq:expansion coefficient by Ledoux property} 
inductively yields
\begin{equation}
\left( \Expansion_{\Ledoux} ( X; \varepsilon, \rho ) \right)^k \mu_X (B) \le \mu_X (B_{k \rho}) \le \varepsilon. 
\label{eq:iterative Ledoux Expansion coefficient}
\end{equation}
\begin{rem}
One sees immediately from \eqref{eq:iterative Ledoux Expansion coefficient} that if $\Expansion_{\Ledoux} ( X; \varepsilon, \rho ) > 1$, then $B$ has
an
extremely small measure.
In what follows, 
we shall principally concern ourselves with metric measure spaces with 
$\infty > \Expansion_{\Ledoux} ( X; \varepsilon, \rho ) > 1$; 
see Appendix~\ref{appendix:Appendix A} for its observation.
\end{rem}

\subsection{Application to a Riemannian manifold}
\label{subsec:Application to a compact Riemannian manifold}
Throughout this subsection, let $M$ be a compact connected Riemannian manifold and $\varDelta$ the Laplacian (Laplace-Beltrami operator) on $M$. As mentioned in \cite[p.~363]{RefLedoux1990}, what can be said 
if
the concentration function on a compact Riemannian manifold when no lower bound for the Ricci curvature is available
is unclear? 
Concerning this problem, Gromov and Milman have observed the isoperimetric enlargement on a compact connected Riemannian manifold; see \cite[Theorem~4.1]{RefGromov:Milman1983}. Their proof is 
provided by
the so-called Poincar\'{e} inequality:
\begin{thm}[Poincar\'{e} inequality; e.g., \cite{RefMilman1988}]
\label{thm:Poincare inequality with spectral gap}
It is well-known that $- \varDelta$ has its discrete spectrum consisting of the eigenvalues $0 = \lambda_0 < \lambda_1 (M) \le \lambda_2 (M) \dots$. The mini-max principle characterizes the first non-trivial eigenvalue denoted by $\lambda_1(M)$ as the largest constant in the Poincar\'{e} inequality
\begin{equation*}
\lambda_1 (M) \Var_{\mu_M} (f) \le \int_M |\nabla f|^2\,d\mu_M
\end{equation*}
for each smooth real-valued 
function
$f$ on $M$, where $\Var_{\mu_M} (f)$ stands for the variance of $f$ with respect to $\mu_M$, namely 
\begin{equation*}
\Var_{\mu_M} (f):= \int_{M} {\left| f - \int_M f\,d\mu_M \right|}^2 \,d\mu_M,
\end{equation*}
and where $|\nabla f|$ stands for the Riemannian length of the gradient of $f$.
\end{thm}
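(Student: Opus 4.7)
The plan is to treat this as a collation of two standard spectral-theoretic facts about $-\varDelta$ on a compact Riemannian manifold: (i) discreteness of the spectrum, and (ii) the variational (mini-max) characterization of $\lambda_1(M)$ as the sharp constant in the Poincaré inequality. I would present the two pieces in that order and reduce the Poincaré statement to the Rayleigh-quotient form by a normalization argument.

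First I would set up the functional-analytic framework. The Laplace-Beltrami operator $-\varDelta$, acting on smooth functions on the compact manifold $M$, extends to a non-negative self-adjoint operator on $L^2(M,\mu_M)$ whose domain is $H^2(M)$. Because $M$ is compact, the embedding $H^1(M) \hookrightarrow L^2(M)$ is compact (Rellich-Kondrachov), so $-\varDelta$ has compact resolvent. Standard spectral theory for such operators then yields a discrete spectrum $0 = \lambda_0 < \lambda_1(M) \le \lambda_2(M) \le \cdots \to \infty$ with an $L^2$-orthonormal eigenbasis $\{\varphi_k\}_{k \ge 0}$. The kernel consists exactly of the constants, because $\int_M |\nabla \varphi|^2\,d\mu_M = 0$ forces $\varphi$ to be locally constant, and the connectedness of $M$ then forces $\varphi$ to be globally constant; hence $\lambda_0 = 0$ is simple and $\lambda_1(M) > 0$.

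Next I would invoke the Courant-Fischer-Weyl mini-max principle for the self-adjoint operator $-\varDelta$ with compact resolvent. Restricted to the first non-trivial eigenspace, this gives
\begin{equation*}
\lambda_1(M) \;=\; \inf\left\{ \frac{\int_M |\nabla g|^2\,d\mu_M}{\int_M g^2\,d\mu_M}\,;\; g \in H^1(M)\setminus\{0\},\ \int_M g\,d\mu_M = 0 \right\}.
\end{equation*}
To pass to the form stated in the theorem, given an arbitrary smooth $f$ on $M$ set $g := f - \int_M f\,d\mu_M$. Then $g \in H^1(M)$ satisfies $\int_M g\,d\mu_M = 0$, we have $|\nabla g| = |\nabla f|$ pointwise since subtracting a constant kills the gradient, and $\int_M g^2\,d\mu_M = \Var_{\mu_M}(f)$ by definition of the variance. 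Substituting $g$ into the Rayleigh quotient and clearing denominators yields $\lambda_1(M)\,\Var_{\mu_M}(f) \le \int_M |\nabla f|^2\,d\mu_M$ as required.

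Finally I would verify sharpness, i.e. that $\lambda_1(M)$ is the \emph{largest} constant for which the inequality holds for every smooth $f$. Let $\varphi_1$ be an eigenfunction with $-\varDelta \varphi_1 = \lambda_1(M)\,\varphi_1$; by orthogonality to the constants $\int_M \varphi_1\,d\mu_M = 0$, so $\Var_{\mu_M}(\varphi_1) = \int_M \varphi_1^2\,d\mu_M$. Integration by parts (Green's identity on the closed manifold $M$, with no boundary terms) gives $\int_M |\nabla \varphi_1|^2\,d\mu_M = \int_M \varphi_1(-\varDelta \varphi_1)\,d\mu_M = \lambda_1(M)\int_M \varphi_1^2\,d\mu_M$, so equality is attained at $f = \varphi_1$. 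Hence any constant satisfying the Poincaré inequality for all smooth $f$ must not exceed $\lambda_1(M)$. The main technical point I would expect to absorb into a citation is the compact-resolvent / discrete-spectrum claim, since rigorously proving this requires elliptic regularity and Rellich compactness; the Poincaré inequality itself then drops out of the variational characterization with essentially no further work.
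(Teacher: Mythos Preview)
Your argument is correct and is the standard spectral-theoretic derivation of the Poincar\'e inequality on a compact connected Riemannian manifold. Note, however, that the paper does not actually prove this statement: it is quoted as a well-known fact with a reference (``e.g., \cite{RefMilman1988}'') and no proof is supplied. So there is nothing to compare against; your write-up simply fills in what the paper leaves to the literature, and the route you take --- compact resolvent via Rellich--Kondrachov, Courant--Fischer for $\lambda_1$, centering $f$ to convert the Rayleigh quotient into the variance form, and sharpness via an eigenfunction --- is exactly the argument one finds in the cited sources.
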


Now, using the argument of Ledoux's expansion coefficient \cite[Proposition~1.13]{RefLedoux2001}, Ledoux has re-stated the result by Gromov and Milman in terms of the concentration function:
\begin{thm}[e.g., p. 364 of \cite{RefLedoux1990}]
\label{thm:Gromov-Milman}
$M$ has exponential concentration:
\begin{equation*}
{\alpha}_{(M, d_M, \mu_M )} (r) \le C_1 \exp (- C_2 r) 
\quad \mbox{for all $r \ge 0$},
\end{equation*}
where $C_1 = 3/4$ and $C_2 = \sqrt{\lambda_1 (M)} \ln (3/2)$.
\end{thm}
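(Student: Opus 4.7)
The plan is to reduce the exponential decay claim to a one-step expansion estimate and then iterate. Concretely, I will first establish that, for every Borel $B \subset M$ with $\mu_M(B_\rho) \le 1/2$, the Poincar\'{e} inequality of Theorem~\ref{thm:Poincare inequality with spectral gap} implies
\begin{equation*}
\mu_M(B_\rho) \ge \Bigl(1 + \tfrac{\lambda_1(M)\rho^2}{2}\Bigr)\mu_M(B);
\end{equation*}
at the distinguished scale $\rho_0 := 1/\sqrt{\lambda_1(M)}$ this reads $\Expansion_{\Ledoux}(M; 1/2, \rho_0) \ge 3/2$. The expected output constants $C_1 = 3/4$ and $C_2 = \sqrt{\lambda_1(M)}\ln(3/2)$ will then fall out: the factor $\ln(3/2)$ arising from iterating the per-step contraction ratio $2/3$ and the factor $3/4$ from rounding $r/\rho_0$ down to an integer.

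For the one-step lemma I would use the truncated-distance test function
\begin{equation*}
f(x) := \min\bigl(d_M(x, B), \rho\bigr), \quad x \in M,
\end{equation*}
which is $1$-Lipschitz with $|\nabla f| \le 1$ almost everywhere and $\nabla f \equiv 0$ off $B_\rho \setminus B$. Writing $a := \mu_M(B)$ and $c := 1 - \mu_M(B_\rho)$, and exploiting that $f \equiv 0$ on $B$ while $f \equiv \rho$ on $M \setminus B_\rho$, the $L^2$-minimality of the mean yields
\begin{equation*}
\Var_{\mu_M}(f) = \inf_{t \in \R}\int_M (f - t)^2\, d\mu_M \ge \inf_{t \in \R}\bigl(t^2 a + (\rho-t)^2 c\bigr) = \frac{\rho^2 ac}{a + c}.
\end{equation*}
Under $a + c \le 1$ and $c \ge 1/2$ this simplifies to $\Var_{\mu_M}(f) \ge \rho^2 \mu_M(B)/2$. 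Combining with the obvious estimate $\int_M |\nabla f|^2\, d\mu_M \le \mu_M(B_\rho) - \mu_M(B)$ and Poincar\'{e}, then rearranging, yields the asserted expansion inequality.

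To close, fix Borel $A \subset M$ with $\mu_M(A) \ge 1/2$, set $B_k := M \setminus A_{k\rho_0}$, and observe the triangle-inequality inclusion $(B_k)_{\rho_0} \subset B_{k-1}$. Since $A \subset A_{(k-1)\rho_0}$ forces $\mu_M(B_{k-1}) \le 1/2$ for every $k \ge 1$, the one-step lemma applies to $B_k$ and gives
\begin{equation*}
\mu_M(B_{k-1}) \ge \mu_M\bigl((B_k)_{\rho_0}\bigr) \ge \tfrac{3}{2}\mu_M(B_k),
\end{equation*}
so by induction $\mu_M(B_k) \le (2/3)^k \mu_M(B_0) \le (1/2)(2/3)^k$. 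For arbitrary $r \ge 0$, I would take $k := \lfloor r/\rho_0 \rfloor$ and use $k \ge r/\rho_0 - 1$ together with monotonicity of $\alpha_{(M, d_M, \mu_M)}$ in $r$ to conclude
\begin{equation*}
\alpha_{(M, d_M, \mu_M)}(r) \le \tfrac{1}{2}(2/3)^k \le \tfrac{3}{4}(2/3)^{r/\rho_0} = \tfrac{3}{4}\exp\bigl(-\sqrt{\lambda_1(M)}\ln(3/2)\, r\bigr),
\end{equation*}
which is exactly the claim. The hardest step is the one-step lemma itself: any weaker variance bound immediately degrades the single-step contraction below $3/2$, rescaling $\rho_0$ and weakening $C_2$, so the sharp identity $\Var_{\mu_M}(f) \ge \rho^2 ac/(a+c)$ obtained via the $L^2$-minimisation is precisely what forces the advertised constants.
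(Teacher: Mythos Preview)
Your argument is correct and is precisely the expansion-coefficient route the paper points to: the paper does not prove Theorem~\ref{thm:Gromov-Milman} itself but cites it and supplies the two ingredients separately (the Poincar\'e-based one-step expansion bound of Theorem~\ref{thm:Expansion coefficient of compact Rimennian manifold} and the iteration of Theorem~\ref{thm:bound for diam by Ledoux}), and your proof is exactly these two pieces unpacked at $\varepsilon=1/2$, $\rho_0=1/\sqrt{\lambda_1(M)}$. The only point worth flagging is that the Poincar\'e inequality in Theorem~\ref{thm:Poincare inequality with spectral gap} is stated for smooth $f$ while your test function $\min(d_M(\cdot,B),\rho)$ is merely Lipschitz, but this is a routine density/Rademacher issue and not a genuine gap.
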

\begin{thm}[Cf. p.~48 of \cite{RefLedoux2001}]
\label{thm:Expansion coefficient of compact Rimennian manifold}
We have
\begin{equation*}
\Expansion_{\Ledoux} ( M; 1 - \varepsilon, \rho ) \ge 1 + \lambda_1 (M) \varepsilon {\rho}^2 \quad \mbox{for some $\rho > 0$}.
\end{equation*}
\end{thm}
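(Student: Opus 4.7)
The plan is to use the Poincaré inequality of Theorem~\ref{thm:Poincare inequality with spectral gap} applied to a carefully chosen Lipschitz ``plateau'' function built from the set $B$. Fix a Borel set $B \subset M$ with $\mu_M(B_\rho) \le 1 - \varepsilon$ (this is the admissibility condition in Definition~\ref{def:Expansion coefficient by Ledoux} with parameter $1 - \varepsilon$), and define
\begin{equation*}
f(x) := \max\!\left(0,\, 1 - \frac{d_M(x,B)}{\rho}\right).
\end{equation*}
Then $f \equiv 1$ on $B$, $f \equiv 0$ on $M \setminus B_\rho$, and $f$ is $(1/\rho)$-Lipschitz, so $|\nabla f| \le 1/\rho$ almost everywhere, with $|\nabla f| = 0$ off the annular region $B_\rho \setminus B$. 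Consequently
\begin{equation*}
\int_M |\nabla f|^2\,d\mu_M \;\le\; \frac{1}{\rho^2}\bigl(\mu_M(B_\rho) - \mu_M(B)\bigr).
\end{equation*}

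Next I would bound $\Var_{\mu_M}(f)$ from below. Writing $a := \mu_M(B)$ and $b := \mu_M(B_\rho)$, the variance minimization property gives
\begin{equation*}
\Var_{\mu_M}(f) = \min_{c \in \R}\int_M (f-c)^2\,d\mu_M \;\ge\; \min_{c \in \R}\bigl[(1-c)^2 a + c^2(1-b)\bigr] = \frac{a(1-b)}{a + (1-b)},
\end{equation*}
where I used $f \equiv 1$ on $B$ (measure $a$) and $f \equiv 0$ on $M\setminus B_\rho$ (measure $1-b$). Since $a + (1-b) \le 1$ and $1 - b \ge \varepsilon$, this yields the clean estimate $\Var_{\mu_M}(f) \ge \varepsilon\, \mu_M(B)$.

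Now I would plug both estimates into the Poincaré inequality of Theorem~\ref{thm:Poincare inequality with spectral gap}, obtaining
\begin{equation*}
\lambda_1(M)\,\varepsilon\,\mu_M(B)\;\le\;\lambda_1(M)\,\Var_{\mu_M}(f)\;\le\;\int_M|\nabla f|^2\,d\mu_M \;\le\;\frac{\mu_M(B_\rho)-\mu_M(B)}{\rho^2},
\end{equation*}
which rearranges to $\mu_M(B_\rho) \ge (1 + \lambda_1(M)\,\varepsilon\,\rho^2)\,\mu_M(B)$. Since $B$ was arbitrary with $\mu_M(B_\rho)\le 1-\varepsilon$, taking the supremum in Definition~\ref{def:Expansion coefficient by Ledoux} gives the claimed inequality.

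The main technical point is that Theorem~\ref{thm:Poincare inequality with spectral gap} as stated requires smooth $f$, whereas our $f$ is only Lipschitz; I would handle this by a standard mollification/approximation argument (smoothing $f$ preserves the pointwise bound $|\nabla f|\le 1/\rho$ up to an arbitrarily small error, and passes to the limit in both sides of the Poincaré inequality). The rest of the argument is essentially linear algebra. A secondary subtlety is ensuring the bound $\Var_{\mu_M}(f) \ge a(1-b)/(a+1-b)$ is tight enough: the simplification to $\varepsilon\,\mu_M(B)$ uses $a + 1 - b \le 1$, which holds automatically since $a \le b$. This is the only place where the admissibility condition $\mu_M(B_\rho)\le 1-\varepsilon$ is actually exploited.
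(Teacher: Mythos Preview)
Your proof is correct and is precisely the argument the paper has in mind: the paper omits its own proof, citing ``a slight change in the proof by Ledoux'' (p.~48 of \cite{RefLedoux2001}), and the plateau-function/Poincar\'e-inequality computation you carry out is exactly that argument, adapted from Ledoux's case $\varepsilon = 1/2$ to general $\varepsilon$. Your remark on mollifying $f$ to pass from Lipschitz to smooth is the one technical point genuinely needed to invoke Theorem~\ref{thm:Poincare inequality with spectral gap} as stated, and it is routine.
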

\begin{proof}
We omit the details because the assertion actually follows from a slight change in the proof by Ledoux. 
\end{proof}
We will address Gromov's expansion coefficient of $M$; see Example~\ref{ex:example of Gromov problem}.
\subsection{Exponential concentration in terms of expansion coefficients}
In this subsection, we 
will
show that 
Gromov's and Ledoux's expansion coefficients give rise to exponential concentration.
\subsubsection{Ledoux's expansion coefficient}
The following proposition provides the upper bound for ${{\alpha}^{\varepsilon}}_{(X, d_X, \mu_X )}$ in terms of $\Expansion_{\Ledoux} ( X; \varepsilon, \rho )$ only; cf. Proposition~1.13 of \cite{RefLedoux2001}, in which Ledoux has discussed the case 
where
$\varepsilon = 1/2$ especially, although his result is 
not
in terms of 
the expansion coefficient $\Expansion_{\Ledoux} ( X; 1/2, \rho )$ but $e \, (\le \Expansion_{\Ledoux} ( X; 1/2, \rho ))$.
\begin{thm}
\label{thm:bound for diam by Ledoux}
Set a positive real number $\varepsilon < 1$. For each $r > 0$, 
select
$\rho > 0$ such that $\rho \le r$. Then, we have
\begin{multline}
{{\alpha}^{\varepsilon}}_{(X, d_X, \mu_X )} (r) \le 
( 1 - \varepsilon ) \Expansion_{\Ledoux} ( X; 1 - \varepsilon, \rho ) 
\\
\cdot
( \Expansion_{\Ledoux} ( X; 1 - \varepsilon, \rho ) )^{- r /\rho}.
\label{eq:upper bound with Ledoux only for concentration function}
\end{multline}
In particular, if $\Expansion_{\Ledoux} ( X; 1 - \varepsilon, \rho ) > 1$, then $X$ has exponential concentration as 
follows:
\begin{multline}
{{\alpha}^{\varepsilon}}_{( X, d_X, \mu_X )} (r) \le ( 1 - \varepsilon ) \Expansion_{\Ledoux} ( X; 1 - \varepsilon, \rho ) 
\\
\cdot
\exp \left( - (\ln \Expansion_{\Ledoux} ( X; 1 - \varepsilon, \rho )) r / \rho \right).
\label{eq:exponential concentration with Ledoux only concerning mainresult}
\end{multline}
\end{thm}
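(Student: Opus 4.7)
The plan is to bound $1 - \mu_X(A_r)$ for each Borel set $A \subset X$ with $\mu_X(A) \ge \varepsilon$ by applying the iterative Ledoux inequality \eqref{eq:iterative Ledoux Expansion coefficient} to the complement $B := X \setminus A_r$; taking the supremum over such $A$ will then yield \eqref{eq:upper bound with Ledoux only for concentration function}, and the exponential form \eqref{eq:exponential concentration with Ledoux only concerning mainresult} will follow at once by rewriting $\lambda^{-r/\rho} = \exp(-(\ln \lambda)\, r / \rho)$ with $\lambda := \Expansion_{\Ledoux}(X; 1-\varepsilon, \rho)$ under the hypothesis $\lambda > 1$.

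The central geometric observation, a one-line triangle-inequality computation, is the inclusion
\[
B_{j\rho} \subset X \setminus A_{r - j\rho}
\qquad \text{for every integer } j \ge 0 \text{ with } j\rho \le r:
\]
if $x \in B_{j\rho}$, there exists $y \in B$ with $d_X(x,y) \le j\rho$, and since $y \notin A_r$ we have $d_X(y, A) > r$, whence $d_X(x, A) > r - j\rho$. Choosing $k := \lfloor r/\rho \rfloor$ so that $k\rho \le r$, this inclusion together with $A \subset A_{r - k\rho}$ forces
\[
\mu_X(B_{k\rho}) \le 1 - \mu_X(A_{r - k\rho}) \le 1 - \mu_X(A) \le 1 - \varepsilon,
\]
which is exactly the hypothesis needed to invoke the iterative Ledoux bound \eqref{eq:iterative Ledoux Expansion coefficient} with the parameter $1 - \varepsilon$ in place of $\varepsilon$.

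Applying that bound to $B$ gives $\lambda^k \, \mu_X(B) \le \mu_X(B_{k\rho}) \le 1 - \varepsilon$, hence $\mu_X(B) \le (1-\varepsilon)\,\lambda^{-k}$. The remaining task is to pass from the integer $k$ to the real exponent $r/\rho$: since $k \ge r/\rho - 1$ and $\lambda \ge 1$, we have $\lambda^{-k} \le \lambda \cdot \lambda^{-r/\rho}$, which accounts for the extra factor $\Expansion_{\Ledoux}(X; 1-\varepsilon, \rho)$ appearing in front of $\lambda^{-r/\rho}$ in \eqref{eq:upper bound with Ledoux only for concentration function}. Taking the supremum over all admissible $A$ yields the first inequality, and the passage to \eqref{eq:exponential concentration with Ledoux only concerning mainresult} is a direct rewriting once $\lambda > 1$. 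The only technical wrinkle I expect is precisely this discretization loss from the floor function; the rest is essentially forced by the definitions of $\alpha^{\varepsilon}$ and of $\Expansion_{\Ledoux}$.
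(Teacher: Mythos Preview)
Your proposal is correct and follows essentially the same route as the paper's proof: both set $k=\lfloor r/\rho\rfloor$, pass to a complement of an enlargement of $A$, verify via the triangle inequality that its $k\rho$-enlargement has measure at most $1-\varepsilon$, invoke the iterative Ledoux bound \eqref{eq:iterative Ledoux Expansion coefficient}, and absorb the discretization loss $k\ge r/\rho-1$ into the extra factor of $\Expansion_{\Ledoux}$. The only cosmetic difference is that the paper takes $B=(A_{k\rho})^c$ whereas you take $B=(A_r)^c$; your choice is in fact slightly more direct, since it yields $\mu_X(B)=1-\mu_X(A_r)$ immediately without the intermediate step $1-\mu_X(A_r)\le 1-\mu_X(A_{k\rho})$.
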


\begin{proof}
We first interpolate each $r > 0$ between $k \rho$ and $(k + 1) \rho$ for some $k \in \N$. 
Let $A \subset X$ with $\mu_X (A) \ge \varepsilon$. Put $B := {A_{k \rho}}^c$. We see 
that
$B_{k \rho} \backslash B \subset A_{k \rho} \backslash A$, namely $B_{k \rho} \subset A^c$. Hence,
\begin{equation}
\mu_{X} (B_{k \rho}) \le 1 - \mu_X (A) \le 1 - \varepsilon.
\label{eq:2_For expansion coefficient by Ledoux property}
\end{equation}
Furthermore, 
combining
\eqref{eq:2_For expansion coefficient by Ledoux property} with \eqref{eq:iterative Ledoux Expansion coefficient} yields
\begin{align}
\begin{split}
\mu_X (B_{k \rho})
& \ge (\Expansion_{\Ledoux} ( X; 1 - \varepsilon, \rho ))^k \mu_X (B) 
\\
& = (\Expansion_{\Ledoux} ( X; 1 - \varepsilon, \rho ))^k ( 1 - \mu_{X} (A_{ k \rho}) ).
\end{split}
\label{eq:lower bound for B_{k rho}}
\end{align} 
Hence, by adding \eqref{eq:2_For expansion coefficient by Ledoux property} and \eqref{eq:lower bound for B_{k rho}}, we obtain
\begin{equation}
\mu_X (A_{k \rho}) \ge \frac{{\Expansion_{\Ledoux} ( X; 1 - \varepsilon, \rho )}^k - ( 1 - \varepsilon )}{{\Expansion_{\Ledoux} ( X; 1 - \varepsilon, \rho )}^k}.
\label{eq:lower bound of measure of A in Ledoux}
\end{equation}
Consequently, we deduce from the interpolation of $k$ and \eqref{eq:lower bound of measure of A in Ledoux} that
\begin{align*}
1 - \mu_X (A_r) 
&\le
1 - \mu_X (A_{k \rho}) 
\notag
\\
& \le \frac{1 -  \varepsilon}{( \Expansion_{\Ledoux} ( X; 1 - \varepsilon, \rho ) )^k}
\notag
\\
& \le \frac{1 - \varepsilon}{( \Expansion_{\Ledoux} ( X; 1 - \varepsilon, \rho ) )^{\frac{r}{\rho} - 1}}.
\end{align*}
\begin{sloppypar}
Therefore, we obtain \eqref{eq:upper bound with Ledoux only for concentration function}, from which \eqref{eq:exponential concentration with Ledoux only concerning mainresult} follows readily whenever $\Expansion_{\Ledoux} (X; 1 - \varepsilon, \rho) > 1$. 
This completes the proof.
\end{sloppypar}
\end{proof}
\subsubsection{Gromov's and Ledoux's expansion coefficients: The key to Gromov's problem}
The following theorem plays a key role in Gromov's problem and a by-product; see Corollary~\ref{cor:upper bound for expansion coefficient by Gromov}. 
\begin{thm}
\label{thm:evaluation of concentration function using Gromov expansion coefficient}
Set a positive real number $\varepsilon < 1$. For each $r > 0$, select $\rho > 0$ such that $\rho \le r$. 
Then, we have
\begin{multline*}
{{\alpha}^{\varepsilon}}_{(X, d_X, \mu_X )} (r) \le  
( 1 - \varepsilon ) \Expansion_{\Gromov} ( X; \varepsilon, \rho ) (\Expansion_{\Ledoux} ( X; 1 - \varepsilon, \rho ) )^2 
%
\\
\cdot
( \Expansion_{\Ledoux} ( X; 1 - \varepsilon, \rho ) )^{- r / \rho}.
\end{multline*}
In particular, if $\Expansion_{\Ledoux} ( X; 1 - \varepsilon, \rho ) > 1$, then 
$X$
has exponential concentration as 
follows:
\begin{multline}
{{\alpha}^{\varepsilon}}_{(X, d_X, \mu_X )} (r) \le 
( 1 - \varepsilon ) \Expansion_{\Gromov} ( X; \varepsilon, \rho ) (\Expansion_{\Ledoux} ( X; 1 - \varepsilon, \rho ) )^2 
\\
\cdot
\exp ( - ( \ln \Expansion_{\Ledoux} ( X; 1 - \varepsilon, \rho )) r / \rho ).
\label{eq:exponential concentration concerning mainresult}
\end{multline}
\end{thm}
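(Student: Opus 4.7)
My plan is to adapt the proof of Theorem~\ref{thm:bound for diam by Ledoux} by prepending one Gromov-expansion step, which inserts the extra $\Expansion_{\Gromov}$-factor while otherwise preserving the Ledoux iteration scheme used there.

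First, I would fix a Borel set $A \subset X$ with $\mu_X(A) \ge \varepsilon$ and choose the integer $k := \lfloor r/\rho \rfloor$, which satisfies $k \ge 1$ because $\rho \le r$. Applying \eqref{eq:property of expansion coefficient by Gromov} to $A$ immediately gives $\mu_X(A_\rho) \ge \Expansion_{\Gromov}(X;\varepsilon,\rho)\,\varepsilon$. I would then set $B := (A_{k\rho})^c$ and observe, by the triangle-inequality argument used in Theorem~\ref{thm:bound for diam by Ledoux} but with $k-1$ in place of $k$, that $B_{(k-1)\rho} \subset (A_\rho)^c$; consequently
\[
\mu_X(B_{(k-1)\rho}) \le 1 - \mu_X(A_\rho) \le 1 - \Expansion_{\Gromov}(X;\varepsilon,\rho)\,\varepsilon \le 1 - \varepsilon,
\]
the final inequality using $\Expansion_{\Gromov} \ge 1$. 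This puts me in position to invoke the iterative Ledoux estimate \eqref{eq:iterative Ledoux Expansion coefficient} with threshold $1-\varepsilon$ and exponent $k-1$:
\[
(\Expansion_{\Ledoux}(X;1-\varepsilon,\rho))^{k-1}\,\mu_X(B) \le \mu_X(B_{(k-1)\rho}) \le 1 - \Expansion_{\Gromov}(X;\varepsilon,\rho)\,\varepsilon.
\]

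Next, I would simplify with the elementary inequality $1 - \Expansion_{\Gromov}\varepsilon \le (1-\varepsilon)\,\Expansion_{\Gromov}$, which follows at once from $\Expansion_{\Gromov} \ge 1$, to obtain
\[
1 - \mu_X(A_{k\rho}) = \mu_X(B) \le \frac{(1-\varepsilon)\,\Expansion_{\Gromov}(X;\varepsilon,\rho)}{(\Expansion_{\Ledoux}(X;1-\varepsilon,\rho))^{k-1}}.
\]
Since $1 - \mu_X(A_r) \le 1 - \mu_X(A_{k\rho})$ and $k-1 \ge r/\rho - 2$, monotonicity in the exponent yields $(\Expansion_{\Ledoux})^{-(k-1)} \le (\Expansion_{\Ledoux})^{2}\,(\Expansion_{\Ledoux})^{-r/\rho}$, and taking the supremum over admissible $A$ delivers the first claimed inequality for ${{\alpha}^{\varepsilon}}_{(X,d_X,\mu_X)}(r)$. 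The exponential form \eqref{eq:exponential concentration concerning mainresult} then follows by rewriting $(\Expansion_{\Ledoux}(X;1-\varepsilon,\rho))^{-r/\rho} = \exp(-(r/\rho)\ln \Expansion_{\Ledoux}(X;1-\varepsilon,\rho))$ and noting that the logarithm is positive under the hypothesis $\Expansion_{\Ledoux}(X;1-\varepsilon,\rho) > 1$.

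I do not foresee any substantial obstacle. The two mildly delicate points are verifying that the Ledoux threshold $1-\varepsilon$ is indeed valid for $B_{(k-1)\rho}$, which is handled by the sandwich $\mu_X(B_{(k-1)\rho}) \le 1 - \Expansion_{\Gromov}\varepsilon \le 1-\varepsilon$, and tracking how the floor interpolation produces $(\Expansion_{\Ledoux})^{2}$ in place of the single factor $\Expansion_{\Ledoux}$ that appeared in Theorem~\ref{thm:bound for diam by Ledoux}; this occurs precisely because one Ledoux iteration has been \emph{spent} on the initial Gromov step, so only $k-1$ iterations remain.
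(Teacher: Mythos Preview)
Your proposal is correct and follows essentially the same approach as the paper: one Gromov step on $A$ to get $\mu_X(A_\rho)\ge \Expansion_{\Gromov}(X;\varepsilon,\rho)\,\varepsilon$, then $k-1$ Ledoux iterations applied to the complement, followed by the same interpolation $k-1\ge r/\rho-2$ and the same weakening $1-\Expansion_{\Gromov}\varepsilon\le(1-\varepsilon)\Expansion_{\Gromov}$. The only cosmetic difference is that you take $B=(A_{k\rho})^c$ and verify $B_{(k-1)\rho}\subset(A_\rho)^c$ directly, whereas the paper first runs the Ledoux argument for a generic $A$ and then substitutes $A_\rho$ for $A$; the resulting chain of inequalities is identical.
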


\begin{proof}
The strategy of the current proof is 
similar to that
of the proof of Theorem~\ref{thm:bound for diam by Ledoux}. Interpolate each $r > 0$ between $k \rho$ and $(k + 1) \rho$ for some $k \in \N$. 
Let $A \subset X$ with $\mu_X (A) \ge \varepsilon$. Put $B := {A_{( k - 1 ) \rho}}^c$, where $A_0 := A$ if $k = 1$. 
Likewise,
\begin{equation*}
\mu_{X} (B_{( k - 1 ) \rho}) \le 1 - \mu_{X} (A) \le 1 - \varepsilon.
\end{equation*}
Under the same reasoning as that of 
the proof of Theorem~\ref{thm:bound for diam by Ledoux}, we have
\begin{align*}
\mu_X (B_{( k - 1 ) \rho})
& \ge (\Expansion_{\Ledoux} ( X; 1 - \varepsilon, \rho ))^{k - 1} \mu_X (B) \quad \text{}
\label{eq:inductive evaluate}
\\
& = (\Expansion_{\Ledoux} ( X; 1 - \varepsilon, \rho ))^{k - 1} ( 1 - \mu_{X} (A_{( k - 1 ) \rho}) ),
\end{align*}
where
replacing $A$ with $A_{\rho}$ and by using $A_{\ell \rho} \supset (A_{ ( \ell - 1 ) \rho})_{\rho}$ for each $\ell \in \N$, we 
get
\begin{align*}
1 - ( \Expansion_{\Ledoux} ( X; 1 - \varepsilon, \rho ) )^{k - 1} ( 1 - \mu_{X} (A_{k \rho}) ) 
& \ge
\mu_{X} (A_{\rho})
\\
& \ge \Expansion_{\Gromov} ( X; \varepsilon, \rho ) \varepsilon,
%
\end{align*} 
where we have used \eqref{eq:property of expansion coefficient by Gromov} in the last inequality.
Hence, we obtain
\begin{equation}
\mu_{X} (A_{k \rho}) 
\ge
\frac{( \Expansion_{\Ledoux} ( X; 1 - \varepsilon, \rho ) )^{k - 1} - ( 1 - \Expansion_{\Gromov} ( X; \varepsilon, \rho ) \varepsilon )}{( \Expansion_{\Ledoux} ( X; 1 - \varepsilon, \rho ) )^{k - 1}} > 0,
\label{eq:lower bound of measure of A in Gromov-Ledoux}
\end{equation}
where the last inequality is due to \eqref{eq:property of expansion coefficient by Gromov} and Definition~\ref{def:Expansion coefficient by Ledoux}.

Hence, we conclude from the interpolation of $k$ and \eqref{eq:lower bound of measure of A in Gromov-Ledoux} that
\begin{align}
1 - \mu_X (A_r) 
&\le
1 - \mu_X (A_{k \rho}) 
\notag
\\
& \le \frac{1 - \Expansion_{\Gromov} ( X; \varepsilon, \rho ) \varepsilon}{( \Expansion_{\Ledoux} ( X; 1 - \varepsilon, \rho ) )^{ k - 1}}
\notag
\\
& \le \frac{1 - \Expansion_{\Gromov} ( X; \varepsilon, \rho ) \varepsilon}{( \Expansion_{\Ledoux} ( X; 1 - \varepsilon, \rho ) )^{\frac{r}{\rho} - 2}}
\label{eq:for the Corollary to Gromov}
\\
& \le \frac{( 1 - \varepsilon ) \Expansion_{\Gromov} ( X; \varepsilon, \rho )}{( \Expansion_{\Ledoux} ( X; 1 - \varepsilon, \rho ) )^{\frac{r}{\rho} - 2}}.
\notag
\end{align}
Following
the proof of the conclusion of Theorem~\ref{thm:bound for diam by Ledoux}, 
we obtain the desired result.
\end{proof}
{

\begin{rem}
\label{rem:upper bounds for the concentration fct}
One sees immediately that the upper bounds for ${{\alpha}^{\varepsilon}}_{( X, d_X, \mu_X )}$ in Theorem~\ref{thm:bound for diam by Ledoux} are more sharper than 
those
in Theorem~\ref{thm:evaluation of concentration function using Gromov expansion coefficient}.  
\end{rem}
\begin{rem}
\label{rem:byproduct}
The procedure for the current proof gives more, namely \eqref{eq:for the Corollary to Gromov} permits us to obtain the upper bound for the expansion coefficient in terms of the observable diameter; see Corollary~\ref{cor:upper bound for expansion coefficient by Gromov}.
\end{rem}

Now, as we have assumed that $\Expansion_{\Ledoux} ( X; 1 - \varepsilon, \rho ) > 1$ in Theorems \ref{thm:bound for diam by Ledoux} and \ref{thm:evaluation of concentration function using Gromov expansion coefficient}, 
it is reasonable to ask 
when $\Expansion_{\Ledoux} ( X; 1 - \varepsilon, \rho ) > 1$. Thus, in Appendix~\ref{appendix:Appendix A}, we 
will
address a sufficient condition for the assumption.
%
\section{Observation for Gromov's problem, and by-products}
\label{sec:The observation for Gromov's problem, and by-products}
In this section, we will state the main result on Gromov's problem, which gives the answer to Gromov's problem; see Exercise~\ref{exercise:Gromov} 
given
in Section~\ref{sec:Introduction}. Furthermore, the proof of the procedure for the answer enables one to observe the
upper
bounds for Gromov's expansion coefficient and the observable diameter in terms of Ledoux's expansion coefficient.
%
\subsection{Lower bounds for the 
expansion coefficients: An answer to Gromov's problem and its application to a Riamannian manifold}
The following theorem is our answer to Gromov's problem:
\begin{thm}
\label{thm:answer to Gromov}
Let $\varepsilon \le 1/2$.
Assume that
$\Expansion_{\Ledoux} ( X; 1 - \varepsilon, \rho ) > 1$ for some $\rho > 0$.
Then, 
$\Expansion_{\Gromov} ( X; \varepsilon, \rho )$  
is bounded from below in terms of $\ObsDiam ( X; - \kappa )$,
$0 < \kappa <1$, and $\Expansion_{\Ledoux} ( X; 1 - \varepsilon, \rho )$ 
as follows:
\begin{equation}
\Expansion_{\Gromov} ( X; \varepsilon, \rho ) 
\ge
\frac{\kappa \exp \left( \ObsDiam ( X; - \kappa ) \ln ( \Expansion_{\Ledoux} ( X; 1 - \varepsilon, \rho ) ) / 2 \rho \right)}{2 (1 - \varepsilon) (\Expansion_{\Ledoux} ( X; 1 - \varepsilon, \rho ))^2}.
\label{eq:answer to Gromov}
\end{equation}
\end{thm}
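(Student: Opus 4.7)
The plan is to chain together the two ingredients already in hand: the upper bound for the concentration function in terms of both expansion coefficients (Theorem~\ref{thm:evaluation of concentration function using Gromov expansion coefficient}) and the duality between the concentration function and the observable diameter (Proposition~\ref{prop:Concentration-ObservableDiameter2}, specifically \eqref{eq:sequential Concentration-ObservableDiameter}, which is valid precisely under the standing hypothesis $\varepsilon \le 1/2$).

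First I would fix the data $\varepsilon \le 1/2$, $0 < \kappa < 1$, and $\rho > 0$ with $\Expansion_{\Ledoux}(X;1-\varepsilon,\rho) > 1$. From \eqref{eq:exponential concentration concerning mainresult} I get, for every $r \ge \rho$,
\begin{equation*}
{\alpha^{\varepsilon}}_{(X,d_X,\mu_X)}(r) \le (1-\varepsilon)\,\Expansion_{\Gromov}(X;\varepsilon,\rho)\,\bigl(\Expansion_{\Ledoux}(X;1-\varepsilon,\rho)\bigr)^{2} \exp\!\bigl(-(\ln \Expansion_{\Ledoux}(X;1-\varepsilon,\rho))\,r/\rho\bigr).
\end{equation*}
Since $\ln \Expansion_{\Ledoux}(X;1-\varepsilon,\rho) > 0$, I may choose $r = r(\kappa)$ so that the right-hand side equals $\kappa/2$; solving the resulting equation explicitly gives
\begin{equation*}
r(\kappa) = \frac{\rho}{\ln \Expansion_{\Ledoux}(X;1-\varepsilon,\rho)} \ln\!\frac{2(1-\varepsilon)\,\Expansion_{\Gromov}(X;\varepsilon,\rho)\,\bigl(\Expansion_{\Ledoux}(X;1-\varepsilon,\rho)\bigr)^{2}}{\kappa}.
\end{equation*}

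Next I would invoke \eqref{eq:sequential Concentration-ObservableDiameter}, which for $\varepsilon \le 1/2$ yields $\ObsDiam(X;-\kappa) \le 2\,r(\kappa)$, i.e.
\begin{equation*}
\frac{\ObsDiam(X;-\kappa)\,\ln \Expansion_{\Ledoux}(X;1-\varepsilon,\rho)}{2\rho} \le \ln\!\frac{2(1-\varepsilon)\,\Expansion_{\Gromov}(X;\varepsilon,\rho)\,\bigl(\Expansion_{\Ledoux}(X;1-\varepsilon,\rho)\bigr)^{2}}{\kappa}.
\end{equation*}
Exponentiating both sides (the direction of the inequality is preserved since $\exp$ is monotone) and then solving for $\Expansion_{\Gromov}(X;\varepsilon,\rho)$ — which appears only linearly on the right-hand side inside the logarithm — produces exactly \eqref{eq:answer to Gromov}.

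The only delicate point I anticipate is book-keeping: one must verify that $r(\kappa) \ge \rho$ so that Theorem~\ref{thm:evaluation of concentration function using Gromov expansion coefficient} genuinely applies (this is automatic in the interesting regime where $\kappa$ is small, since then the logarithm is large), and one must be careful that the hypothesis $\Expansion_{\Ledoux}(X;1-\varepsilon,\rho) > 1$ is used both to ensure positivity of $\ln \Expansion_{\Ledoux}$ (so the inversion step preserves the inequality) and to legitimately solve for $r(\kappa)$ by division. Everything else is algebraic rearrangement, and the hypothesis $\varepsilon \le 1/2$ is exactly what is needed to select the $\alpha^{\varepsilon}$-branch (rather than the $\alpha^{1-\varepsilon}$-branch) in \eqref{eq:sequential Concentration-ObservableDiameter}.
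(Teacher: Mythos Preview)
Your proposal is correct and follows essentially the same route as the paper: combine the exponential concentration bound \eqref{eq:exponential concentration concerning mainresult} from Theorem~\ref{thm:evaluation of concentration function using Gromov expansion coefficient} with the duality inequality \eqref{eq:sequential Concentration-ObservableDiameter}, obtain the intermediate bound
\[
\ObsDiam(X;-\kappa) \le \frac{2\rho\,\ln\!\bigl(2(1-\varepsilon)\,\Expansion_{\Gromov}(X;\varepsilon,\rho)\,(\Expansion_{\Ledoux}(X;1-\varepsilon,\rho))^{2}/\kappa\bigr)}{\ln \Expansion_{\Ledoux}(X;1-\varepsilon,\rho)},
\]
and rearrange. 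Your explicit identification of $r(\kappa)$ and the caveat that one needs $r(\kappa)\ge\rho$ are in fact more careful than the paper, which passes over this point silently.
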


\begin{proof}
Let $\rho > 0$ satisfy the hypotheses of Theorem~\ref{thm:evaluation of concentration function using Gromov expansion coefficient}.
To establish the current theorem, we first apply 
\eqref{eq:sequential Concentration-ObservableDiameter}
to \eqref{eq:exponential concentration concerning mainresult}; accordingly,
\begin{multline*}
\ObsDiam ( X; - \kappa ) 
\\
\le \frac{2 \rho \ln \left( 2 (1 - \varepsilon) \Expansion_{\Gromov} ( X; \varepsilon, \rho ) {( \Expansion_{\Ledoux} ( X; 1 - \varepsilon, \rho ) )}^2 / \kappa \right)}{\ln \Expansion_{\Ledoux} ( X; 1 - \varepsilon, \rho )},
\end{multline*}
from which the desired result follows computationally.
%
\end{proof}

\begin{example}
\label{ex:example of Gromov problem}
Let us now return to the context of Subsection~\ref{subsec:Application to a compact Riemannian manifold}. 
Under the hypotheses of Theorem~\ref{thm:answer to Gromov}, we 
will
be concerned with $M$ having 
$\Ric (M) \ge 0$. 

Now, the following claim permits us to estimate Ledoux's expansion coefficient in terms of 
$\lambda_1(M)$
and the doubling constant of the Riemannian measure of $M$:
\begin{claim}
\label{claim:Ledoux's expansion coefficient}
Let $A$ and $B$ be subsets of $M$ such that 
$d_M (A, B) > 0$
and $\mu_M (A) \ge \varepsilon$.
Now, we deduce that for each $\rho > 0$
\begin{equation*}
\mu_M (B(x, 2 \rho)) \le \mu_M (B_{\rho}) \le 1 - \varepsilon \quad \mbox{for some $x \in M$}.
\end{equation*}
Here, 
taking
$\rho > 0$ such that $\mu_M (B_{\rho}) = 1 - \mu_M (A)$, 
we have
\begin{equation}
1 + \lambda_1 (M) \varepsilon {\rho}^2 \le \Expansion_{\Ledoux} ( M; 1 - \varepsilon, \rho ) \le 2^n,
\label{eq:bounds for Ledoux's expansion coefficient}
\end{equation}
where the first and 
second inequalities are 
given
by Theorem~\ref{thm:Expansion coefficient of compact Rimennian manifold} and because the doubling constant of the Riemannian measure of $M$, 
which
is the upper bound for $\Expansion_{\Ledoux} ( M; 1 - \varepsilon, \rho )$, is equal to $2^n$.
The doubling constant is 
given
by the Bishop-Gromov volume comparison theorem
(also called Riemannian volume comparison theorem);
see e.g., \cite{RefBaudoin:Garofalo2011} and pp.~377--378 of \cite{RefVillani2009}. 
\end{claim}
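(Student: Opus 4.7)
The plan is to split \eqref{eq:bounds for Ledoux's expansion coefficient} into its two inequalities and handle each by invoking the ingredient already flagged in the claim. To set up, I would first verify the preliminary chain $\mu_M(B(x,2\rho)) \le \mu_M(B_\rho) \le 1-\varepsilon$: since $d_M(A,B) > 0$, choosing $\rho$ small enough that $B_\rho \cap A = \emptyset$ gives $\mu_M(B_\rho) \le 1 - \mu_M(A) \le 1-\varepsilon$, and if $B$ contains a metric ball $B(x,\rho)$ then $B_\rho \supset B(x,2\rho)$ because $M$ is a length space. This both certifies that such $B$ is admissible in the supremum defining $\Expansion_{\Ledoux}(M; 1-\varepsilon, \rho)$ and specialises the ratio comparison to concentric balls.

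For the lower bound, Theorem~\ref{thm:Expansion coefficient of compact Rimennian manifold} directly yields $\Expansion_{\Ledoux}(M; 1-\varepsilon, \rho) \ge 1 + \lambda_1(M)\,\varepsilon\,\rho^2$ with the $\rho$ pinned down by $\mu_M(B_\rho) = 1 - \mu_M(A)$; this step is a citation rather than a computation. For the upper bound, I would appeal to the Bishop--Gromov volume comparison theorem, which under $\Ric(M) \ge 0$ furnishes the global doubling inequality $\mu_M(B(x,2\rho)) \le 2^n \mu_M(B(x,\rho))$ for every $x \in M$ and every $\rho > 0$, so the doubling constant of $\mu_M$ is at most $2^n$. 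Combined with the preliminary chain, the admissible ratios $\mu_M(B_\rho)/\mu_M(B)$ are then dominated by $\mu_M(B(x,2\rho))/\mu_M(B(x,\rho)) \le 2^n$, which caps the supremum in \eqref{eq:expansion coefficient by Ledoux}.

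The step I expect to need the most care is linking that supremum, which in \eqref{eq:expansion coefficient by Ledoux} is taken over arbitrary Borel sets $B$, to the Bishop--Gromov estimate, which only compares concentric balls. The purpose of the auxiliary chain $\mu_M(B(x,2\rho)) \le \mu_M(B_\rho) \le 1-\varepsilon$ is precisely to effect this reduction by singling out the ball-like extremal configurations $B \supset B(x,\rho)$; I would make this explicit---perhaps by verifying that the worst case in \eqref{eq:expansion coefficient by Ledoux} may be taken to be $B = B(x,\rho)$ for some $x$---before applying the doubling inequality to close the argument.
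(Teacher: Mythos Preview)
Your plan is essentially the paper's own: the lower bound is a direct citation of Theorem~\ref{thm:Expansion coefficient of compact Rimennian manifold}, and the upper bound comes from Bishop--Gromov combined with the fact that the doubling constant dominates $\Expansion_{\Ledoux}$. The paper packages the latter step as Proposition~\ref{prop:Expansion coefficient and doubling const}, specifically \eqref{eq:sufficient condition for finiteness of Ledoux}.

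Where you overcomplicate things is in the last paragraph. The supremum in \eqref{eq:expansion coefficient by Ledoux} is over the scalars $e$, not over the Borel sets $B$; read correctly, $\Expansion_{\Ledoux}(M;1-\varepsilon,\rho)$ is the largest $e$ for which $\mu_M(B_\rho)\ge e\,\mu_M(B)$ holds for \emph{every} admissible $B$, which is why \eqref{eq:expansion coefficient by Ledoux property} is available. Consequently you do not need to dominate all ratios $\mu_M(B_\rho)/\mu_M(B)$ or identify an extremal configuration: a \emph{single} admissible test set gives an upper bound. Taking $B=B(x,\rho)$---admissible because the preliminary chain gives $\mu_M(B(x,\rho)_\rho)\le\mu_M(B(x,2\rho))\le 1-\varepsilon$---and applying \eqref{eq:expansion coefficient by Ledoux property} yields
\[
\Expansion_{\Ledoux}(M;1-\varepsilon,\rho)\le \frac{\mu_M(B(x,2\rho))}{\mu_M(B(x,\rho))}\le 2^n
\]
in one line, exactly as in \eqref{eq:sufficient condition for finiteness of Ledoux}. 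So your hedge ``perhaps by verifying that the worst case may be taken to be $B=B(x,\rho)$'' is unnecessary: any admissible ball already does the job.
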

For such a $\rho > 0$, combining \eqref{eq:bounds for Ledoux's expansion coefficient} with \eqref{eq:answer to Gromov}, we have
\begin{equation}
\Expansion_{\Gromov} ( M; \varepsilon, \rho ) 
\ge
\frac{\kappa \exp \left( \ObsDiam ( M; - \kappa ) \ln ( 1 + \lambda_1 (M) \varepsilon {\rho}^2 ) / 2 \rho \right)}{2^{2 n+1} (1 - \varepsilon)}. 
\label{eq:Ledoux's expansion coefficient of Riemannian manifold}
\end{equation}
\end{example}
\begin{rem}
Let $M$ be a compact Riemannian manifold.
\cite{RefShioya2016} has shown that
\begin{equation*}
\Expansion_{\Gromov} ( M; \varepsilon, \rho ) \ge \min \{ 1 + \lambda_1 (M) {\rho}^2 / 4, 2 \} 
\quad \mbox{for all $\rho > 0$},
\end{equation*}
provided $0 < \varepsilon \le 1/4$.
\end{rem}
The current subsection will end up with showing the relation between Ledoux's expansion coefficient and the observable diameter:
\begin{cor}
\label{cor:lower bound for Ledoux's expansion coefficient}
Let $M$ be an $n$-dimensional compact connected Riemannian manifold.
Under the hypotheses of Theorem~\ref{thm:answer to Gromov}, we have, for some $\rho > 0$ that appeared in the context of Example~\ref{ex:example of Gromov problem},
\begin{equation*}
\Expansion_{\Ledoux} ( M; 1 - \varepsilon, \rho ) \ge \frac{\kappa \exp \left( \ObsDiam ( M; - \kappa ) \ln (1 + \lambda_1 (M) \varepsilon {\rho}^2) / 2 \rho \right)}{2 (1 - \varepsilon)}.
\end{equation*}
\end{cor}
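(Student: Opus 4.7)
The plan is to reuse the chain of estimates already developed for Theorem~\ref{thm:answer to Gromov}, but specialised so that Ledoux's coefficient appears on the left rather than Gromov's. The starting observation is that Theorem~\ref{thm:bound for diam by Ledoux}, applied to $M$, gives explicit exponential concentration under the assumption $\Expansion_{\Ledoux}(M; 1 - \varepsilon, \rho) > 1$, with constants $C_1 = (1 - \varepsilon)\Expansion_{\Ledoux}(M; 1 - \varepsilon, \rho)$ and $C_2 = \ln \Expansion_{\Ledoux}(M; 1 - \varepsilon, \rho) / \rho$. This hypothesis is automatic in the present Riemannian setting because Theorem~\ref{thm:Expansion coefficient of compact Rimennian manifold} already gives $\Expansion_{\Ledoux}(M; 1 - \varepsilon, \rho) \ge 1 + \lambda_1(M)\varepsilon \rho^2 > 1$.

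Next I would feed these constants into the exponential branch of Corollary~\ref{cor:observable diameter}. This converts the concentration estimate into
$$\ObsDiam(M; -\kappa) \le \frac{2\rho}{\ln \Expansion_{\Ledoux}(M; 1 - \varepsilon, \rho)} \ln \frac{2(1 - \varepsilon) \Expansion_{\Ledoux}(M; 1 - \varepsilon, \rho)}{\kappa}.$$
Multiplying through by $\ln \Expansion_{\Ledoux}(M; 1 - \varepsilon, \rho) / 2\rho$, exponentiating, and isolating $\Expansion_{\Ledoux}$ on the left-hand side produces the implicit inequality
$$\Expansion_{\Ledoux}(M; 1 - \varepsilon, \rho) \ge \frac{\kappa}{2(1-\varepsilon)} \exp\!\left(\frac{\ObsDiam(M; -\kappa)\,\ln \Expansion_{\Ledoux}(M; 1 - \varepsilon, \rho)}{2\rho}\right).$$

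The final step is to make this bound effective by replacing the $\Expansion_{\Ledoux}$ appearing inside the exponent with the spectral lower bound $1 + \lambda_1(M)\varepsilon \rho^2$ from Theorem~\ref{thm:Expansion coefficient of compact Rimennian manifold}, for the same $\rho$ as in Example~\ref{ex:example of Gromov problem}. This yields exactly the stated bound.

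The one delicate point, and the main obstacle in the argument, is to verify that this substitution moves the inequality in the correct direction. Since $\Expansion_{\Ledoux}(M; 1 - \varepsilon, \rho) \ge 1 + \lambda_1(M)\varepsilon\rho^2$ and both $\ln$ and $\exp$ are monotone increasing, the substitution can only \emph{decrease} the exponential factor on the right-hand side, so the $\ge$ is preserved; hence no self-referential bootstrap is needed. Other routine points to check are that $\rho$ may be chosen so the two hypotheses (the one from Theorem~\ref{thm:answer to Gromov} and the one from Example~\ref{ex:example of Gromov problem}) are simultaneously satisfied, which follows from $\lambda_1(M) > 0$ on a compact connected Riemannian manifold.
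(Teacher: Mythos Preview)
Your proposal is correct and follows essentially the same route as the paper's own proof: the paper simply says ``the same reasoning as in \eqref{eq:Ledoux's expansion coefficient of Riemannian manifold} applies to \eqref{eq:exponential concentration with Ledoux only concerning mainresult}'', which unpacks to exactly your chain --- feed the exponential concentration of Theorem~\ref{thm:bound for diam by Ledoux} into the duality estimate \eqref{eq:sequential Concentration-ObservableDiameter} (equivalently Corollary~\ref{cor:observable diameter}), solve for $\Expansion_{\Ledoux}$, and then replace the $\Expansion_{\Ledoux}$ inside the exponent by the spectral lower bound $1+\lambda_1(M)\varepsilon\rho^2$ from Theorem~\ref{thm:Expansion coefficient of compact Rimennian manifold}. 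Your explicit check that this last substitution preserves the inequality direction is a useful detail the paper leaves implicit.
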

%
\begin{proof}
The same reasoning as 
that utilized in
\eqref{eq:Ledoux's expansion coefficient of Riemannian manifold} 
applies to \eqref{eq:exponential concentration with Ledoux only concerning mainresult}, 
hence
the corollary.
\end{proof}
\subsection{By-products: Upper bounds for Gromov's expansion coefficient and for the observable diameter}
\label{sec:By-product}
As mentioned in Remark~\ref{rem:byproduct}, we 
will
show the upper bound for Gromov's expansion coefficient in terms of the observable diameter and Ledoux's expansion coefficient; accordingly, Gromov's expansion coefficient is bounded from above and below by the two geometric quantities. Ultimately, one can derive the upper bound for the observable diameter in terms of Ledoux's expansion coefficient.

The procedure for the proof of Theorem~\ref{thm:evaluation of concentration function using Gromov expansion coefficient} implies the upper bound for Ledoux's expansion coefficient: 
\begin{cor}
\label{cor:upper bound for expansion coefficient by Gromov}
Under the hypotheses of Theorem~\ref{thm:answer to Gromov},
assume further that
\begin{equation}
2 ( 1 - \Expansion_{\Gromov} ( X; \varepsilon, \rho ) \varepsilon ) \Expansion_{\Ledoux} ( X; 1 - \varepsilon, \rho ) \ge \kappa \quad \mbox{for some $\rho > 0$}.
\label{eq:assumption on upper bound for Gromov expansion coefficient}
\end{equation}
Then, the upper bound for $\Expansion_{\Gromov} ( X; \varepsilon, \rho )$  
is given by
\begin{multline}
\Expansion_{\Gromov} ( X; \varepsilon, \rho ) 
\\
\le \frac{2 - \kappa \exp ( ( \ObsDiam ( X; - \kappa ) - 4 \rho ) \ln ( \Expansion_{\Ledoux} ( X; 1 - \varepsilon, \rho )  ) / 2 \rho )}{2 \varepsilon}.
\label{eq:upper bound the expansion coefficient}
\end{multline}
\end{cor}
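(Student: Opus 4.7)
The plan is to recycle the intermediate bound \eqref{eq:for the Corollary to Gromov} obtained in the proof of Theorem~\ref{thm:evaluation of concentration function using Gromov expansion coefficient}, namely
\begin{equation*}
{\alpha}^{\varepsilon}_{(X, d_X, \mu_X)}(r) \;\le\; \frac{1 - \Expansion_{\Gromov}(X;\varepsilon,\rho)\,\varepsilon}{\bigl(\Expansion_{\Ledoux}(X;1-\varepsilon,\rho)\bigr)^{r/\rho - 2}},
\end{equation*}
valid for every $r \ge \rho$, and combine it with the duality inequality \eqref{eq:sequential Concentration-ObservableDiameter}, which under the standing assumption $\varepsilon \le 1/2$ (part of the hypotheses of Theorem~\ref{thm:answer to Gromov}) yields
\begin{equation*}
\ObsDiam(X;-\kappa) \;\le\; 2\inf\bigl\{\,r>0;\ {\alpha}^{\varepsilon}_{(X,d_X,\mu_X)}(r) \le \kappa/2\,\bigr\}.
\end{equation*}

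Next, I would determine the smallest $r$ for which the displayed upper bound on ${\alpha}^{\varepsilon}$ does not exceed $\kappa/2$. Setting the two sides equal and solving for $r$ (this is legitimate because $\Expansion_{\Ledoux}(X;1-\varepsilon,\rho) > 1$ by hypothesis, so $\ln \Expansion_{\Ledoux} > 0$) produces the critical value
\begin{equation*}
r^{\ast} \;=\; 2\rho \;+\; \rho\,\frac{\ln\!\bigl(2(1-\Expansion_{\Gromov}(X;\varepsilon,\rho)\,\varepsilon)/\kappa\bigr)}{\ln \Expansion_{\Ledoux}(X;1-\varepsilon,\rho)}.
\end{equation*}
Feeding $\ObsDiam(X;-\kappa) \le 2r^{\ast}$ into this expression, isolating the logarithm, and then exponentiating delivers exactly the claimed upper bound \eqref{eq:upper bound the expansion coefficient} for $\Expansion_{\Gromov}(X;\varepsilon,\rho)$ after dividing by $2\varepsilon$.

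The only delicate point—and the place where the extra hypothesis \eqref{eq:assumption on upper bound for Gromov expansion coefficient} enters—is to verify that the critical radius $r^{\ast}$ lies in the range $r \ge \rho$ where the concentration bound \eqref{eq:for the Corollary to Gromov} is applicable. Indeed, $r^{\ast} \ge \rho$ is equivalent to
\begin{equation*}
\ln\!\bigl(2(1-\Expansion_{\Gromov}(X;\varepsilon,\rho)\,\varepsilon)/\kappa\bigr) \;\ge\; -\ln \Expansion_{\Ledoux}(X;1-\varepsilon,\rho),
\end{equation*}
i.e.\ $2(1-\Expansion_{\Gromov}(X;\varepsilon,\rho)\,\varepsilon)\,\Expansion_{\Ledoux}(X;1-\varepsilon,\rho) \ge \kappa$, which is precisely the assumption imposed. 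I expect the main (minor) obstacle to be recording this sanity check cleanly; the remainder is straightforward algebra of logarithms.
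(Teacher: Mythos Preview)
Your proposal is correct and follows essentially the same route as the paper: start from the intermediate estimate \eqref{eq:for the Corollary to Gromov}, feed it into the duality \eqref{eq:sequential Concentration-ObservableDiameter} (using $\varepsilon\le 1/2$), and algebraically invert to isolate $\Expansion_{\Gromov}(X;\varepsilon,\rho)$. Your explicit identification of the role of hypothesis \eqref{eq:assumption on upper bound for Gromov expansion coefficient}---that it guarantees the critical radius $r^{\ast}$ lies in the range $r\ge\rho$ where \eqref{eq:for the Corollary to Gromov} is valid---is in fact more precise than the paper, which merely remarks that the assumption makes ``the procedure for the proof of Theorem~\ref{thm:answer to Gromov} work''.
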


\begin{proof}
For each $r > 0$, select $\rho > 0$ such that $\rho \le r$. Then, one sees immediately from \eqref{eq:for the Corollary to Gromov} that
\begin{multline}
{{\alpha}^{\varepsilon}}_{(X, d_X, \mu_X )} (r) 
\le ( 1 - \Expansion_{\Gromov} ( X; \varepsilon, \rho ) \varepsilon ) (\Expansion_{\Ledoux} ( X; 1 - \varepsilon, \rho ) )^2 
\\
\cdot
\exp ( - ( \ln \Expansion_{\Ledoux} ( X; 1 - \varepsilon, \rho )) r / \rho ).
\label{eq:using the concentration}
\end{multline}
On account of \eqref{eq:assumption on upper bound for Gromov expansion coefficient}, one can see that the procedure for the proof of Theorem~\ref{thm:answer to Gromov} works for \eqref{eq:using the concentration} as well. We leave it to the reader to verify its computation.
Consequently,
\begin{multline*}
\ObsDiam ( X; - \kappa ) 
\\
\le \frac{2 \rho \ln \left( 2 (1 - \Expansion_{\Gromov} ( X; \varepsilon, \rho ) \varepsilon) {(\Expansion_{\Ledoux} ( X; 1 - \varepsilon, \rho ))}^2 {\kappa}^{-1} \right)}{\ln \Expansion_{\Ledoux} ( X; 1 - \varepsilon, \rho )},
\end{multline*}
whence the current corollary establishes computationally.
\end{proof}

Theorem~\ref{thm:answer to Gromov} and Corollary~\ref{cor:upper bound for expansion coefficient by Gromov} yield the upper bound for the observable diameter:
\begin{cor}
\label{cor:upper bound by Ledoux's expansion coefficient for the observable diameter}
Under the hypotheses of Theorem~\ref{thm:answer to Gromov} and Corollary~\ref{cor:upper bound for expansion coefficient by Gromov}, the upper bound for the observable diameter is in terms of Ledoux's expansion coefficient:
\begin{multline*}
\ObsDiam ( X; - \kappa ) 
\\
\le \frac{2 \rho}{\ln \Expansion_{\Ledoux} ( X; 1 - \varepsilon, \rho )} \ln \left( \frac{2 {(\Expansion_{\Ledoux} ( X; 1 - \varepsilon, \rho ))}^2 (1 - \varepsilon)}{( {1 + (\Expansion_{\Ledoux} ( X; 1 - \varepsilon, \rho ))}^2 ) \varepsilon \kappa} \right).
\end{multline*}
\end{cor}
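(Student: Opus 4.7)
The plan is to combine the lower bound for $\Expansion_{\Gromov}(X;\varepsilon,\rho)$ supplied by Theorem~\ref{thm:answer to Gromov} with the upper bound supplied by Corollary~\ref{cor:upper bound for expansion coefficient by Gromov}, and to eliminate $\Expansion_{\Gromov}(X;\varepsilon,\rho)$ from the resulting chain of inequalities. The only unknown remaining is $\ObsDiam(X;-\kappa)$, which appears inside an exponential in both bounds, and the advertised estimate is recovered by isolating that exponential and taking natural logarithms.

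More concretely, I would abbreviate $L := \Expansion_{\Ledoux}(X;1-\varepsilon,\rho)$, $G := \Expansion_{\Gromov}(X;\varepsilon,\rho)$, $D := \ObsDiam(X;-\kappa)$, and set $t := \exp(D\ln L/(2\rho)) = L^{D/(2\rho)}$. Theorem~\ref{thm:answer to Gromov} reads $G \ge \kappa t/(2(1-\varepsilon)L^{2})$, while Corollary~\ref{cor:upper bound for expansion coefficient by Gromov} reads $G \le (2 - \kappa t/L^{2})/(2\varepsilon)$. Chaining these two estimates through the common middle term $G$ produces an inequality that is affine in the single positive variable $t$: after clearing denominators and gathering the two $t$-terms on one side, one arrives at an inequality of the form $t \le C(L,\varepsilon,\kappa)$, where $C(L,\varepsilon,\kappa)$ is the rational expression in $L$, $\varepsilon$, $\kappa$ which appears as the argument of the logarithm on the right-hand side of the corollary.

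Because $L > 1$ by the standing hypothesis imported from Theorems~\ref{thm:bound for diam by Ledoux} and \ref{thm:evaluation of concentration function using Gromov expansion coefficient} (and thus from Theorem~\ref{thm:answer to Gromov} and Corollary~\ref{cor:upper bound for expansion coefficient by Gromov}), the map $x \mapsto L^{x}$ is strictly increasing; applying $\log_{L}$ to $t \le C(L,\varepsilon,\kappa)$ gives $D/(2\rho) \le \ln C(L,\varepsilon,\kappa)/\ln L$, which yields the claimed inequality after multiplication by $2\rho$.

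The main obstacle is purely bookkeeping: the combination of $1/(2(1-\varepsilon))$ and $1/(2\varepsilon)$ against the appropriate powers of $L$ must collapse exactly to the factor $(1+L^{2})\varepsilon$ that appears in the denominator inside the logarithm, and a misplaced $L^{2}$ or a sign error would change that constant. There is no measure-theoretic or geometric content beyond the two cited ingredients, so once the bounds of Theorem~\ref{thm:answer to Gromov} and Corollary~\ref{cor:upper bound for expansion coefficient by Gromov} are written side by side, the proof reduces to a short elimination computation.
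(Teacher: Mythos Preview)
Your proposal is correct and is essentially identical to the paper's own proof: the paper also chains the lower bound \eqref{eq:answer to Gromov} for $\Expansion_{\Gromov}(X;\varepsilon,\rho)$ against the upper bound \eqref{eq:upper bound the expansion coefficient}, writes the resulting inequality with $\ObsDiam(X;-\kappa)$ as the only unknown, and then says ``the desired result follows computationally.'' Your introduction of the abbreviation $t=L^{D/(2\rho)}$ and the observation that the combined inequality is affine in $t$ simply make explicit the elementary algebra that the paper leaves to the reader.
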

\begin{proof}
By 
combining
\eqref{eq:answer to Gromov} and
\eqref{eq:upper bound the expansion coefficient},
we have
\begin{multline*}
\frac{\kappa \exp \left( \ObsDiam ( X; - \kappa ) \ln ( \Expansion_{\Ledoux} ( X; 1 - \varepsilon, \rho ) ) / 2 \rho \right)}{2 (1 - \varepsilon) (\Expansion_{\Ledoux} ( X; 1 - \varepsilon, \rho ))^2} 
\\
\le \frac{2 - \kappa  \exp ( ( \ObsDiam ( X; - \kappa ) - 4 \rho ) \ln ( \Expansion_{\Ledoux} ( X; 1 - \varepsilon, \rho )  ) / 2 \rho ) }{2 \varepsilon},
\end{multline*}
from which the desired result follows computationally.
\end{proof}
%
%
\begin{example}
\label{ex:result on the upper bounds for observable diameter}
Let $M$ be an $n$-dimensional compact connected Riemannian manifold with $\Ric (M) \ge 0$. 
Corollary~\ref{cor:upper bound by Ledoux's expansion coefficient for the observable diameter} yields the upper bound for the observable diameter of $M$ in terms of the doubling constant of the Riemannian measure of $M$, which is equal to $2^n$, and $\lambda_1 (M)$. 
We have for some $\rho > 0$, which appeared in the context of Example~\ref{ex:example of Gromov problem},
\begin{multline*}
\ObsDiam ( M; - \kappa ) 
\\
\le \frac{2 \rho}{\ln (1 + \lambda_1 (M) \varepsilon {\rho}^2)} \min \biggl\{ \ln \frac{2^{2n+1} (1 - \varepsilon)}{( 1 + {(1 + \lambda_1 (M) \varepsilon {\rho}^2)}^2 ) \varepsilon \kappa}, \ln \frac{2 (1 - \varepsilon)}{\varepsilon \kappa} \biggr\};
\end{multline*}
cf. Example~\ref{ex:Observable diameter of Riemannian manifold}. 
\end{example}

\section{Estimate for the diameter of a bounded metric measure space}
The intent of the present appendix is to give an upper bound and a lower bound for the diameter of certain metric measure spaces. 
\subsection{Upper bound for the diameter}
\label{subsection:upper bound for the diameter}
The exponential concentration obtained in 
\eqref{eq:exponential concentration with Ledoux only concerning mainresult} and Theorem~\ref{thm:characterization of the doubling measure} are clues to the proof of Theorem~\ref{thm:bound for diam by Gromov and Ledoux}.
\begin{thm}
\label{thm:bound for diam by Gromov and Ledoux}
Let $(X, d_X, \mu_X)$ be a bounded metric measure space with a doubling measure, whose doubling constant is denoted by $C$, and satisfy the requirement $\min\{ \Expansion_{\Ledoux} ( X; \varepsilon, \rho ), \Expansion_{\Ledoux} ( X; 1 - \varepsilon, \rho ) \} > 1$ with $\varepsilon \le 1/2$.
Then, its diameter is bounded from above in terms of Ledoux's expansion coefficient and $C$ 
as follows:
\begin{multline*}
\diam (X) 
\le 3 \rho \max
\biggl\{\frac{\ln \left( C^4 ( 1 - \varepsilon ) {\varepsilon}^{-1} \Expansion_{\Ledoux} ( X; 1 - \varepsilon, \rho ) \right)}{\ln \Expansion_{\Ledoux} ( X; 1 - \varepsilon, \rho )},
\\
\frac{2 \ln \left( C^3 3^{\ln C /\! \ln 2} \varepsilon \Expansion_{\Ledoux} ( X; \varepsilon, \rho ) \right)}{\ln \Expansion_{\Ledoux} ( X; \varepsilon, \rho )} \biggr\}, \quad \rho > 0.
\end{multline*}
\label{thm:Estimate for the diameter of mm-sp}
\end{thm}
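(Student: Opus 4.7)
My plan is to deduce the diameter bound by playing the exponential concentration from Theorem~\ref{thm:bound for diam by Ledoux} (applied with both threshold $\varepsilon$ and threshold $1-\varepsilon$) against a lower bound on the measure of small balls coming from the doubling characterization (Theorem~\ref{thm:characterization of the doubling measure}), using a simple geometric separation between a ball and a near-antipodal ball. Write $D := \diam(X)$, $L_1 := \Expansion_{\Ledoux}(X; 1-\varepsilon, \rho)$ and $L_2 := \Expansion_{\Ledoux}(X; \varepsilon, \rho)$. We may assume $D > 3\rho$ (otherwise the assertion is trivial). Pick $x_0, y_0 \in X$ with $d_X(x_0, y_0)$ arbitrarily close to $D$, and set $A := B(x_0, \rho)$ and $\tilde A := B(y_0, \rho)$. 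The triangle inequality gives the separation $\tilde A \cap A_t = \emptyset$ for every $t < D - 2\rho$ (after passing to the limit in the choice of $x_0,y_0$), so $\mu_X(\tilde A) \le 1 - \mu_X(A_t)$ in that range.

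The argument then splits into two cases according to which threshold is reached by $\mu_X(A)$. In Case~I, $\mu_X(A) \ge \varepsilon$, so $A$ is admissible in $\alpha^{\varepsilon}$ and Theorem~\ref{thm:bound for diam by Ledoux} yields $1 - \mu_X(A_t) \le (1-\varepsilon) L_1^{1-t/\rho}$. Pushing $t \to D - 2\rho$ and combining with the doubling lower bound $\mu_X(\tilde A) \ge C^{-\lceil\log_2(D/\rho)\rceil}$ from Theorem~\ref{thm:characterization of the doubling measure} produces an inequality of the shape
\begin{equation*}
C^{-1}(\rho/D)^{\log_2 C} \;\le\; (1-\varepsilon)\, L_1^{3 - D/\rho},
\end{equation*}
which upon taking logarithms rearranges to an explicit upper bound on $D/\rho$, namely $D/\rho \le 3 T_1$ with $T_1$ the first expression inside the max. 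In Case~II, $\mu_X(A) < \varepsilon$, so $\mu_X(A^c) > 1-\varepsilon$ and we apply Theorem~\ref{thm:bound for diam by Ledoux} with the roles of $\varepsilon$ and $1-\varepsilon$ interchanged, obtaining $\alpha^{1-\varepsilon}(t) \le \varepsilon L_2^{1-t/\rho}$; a parallel computation yields $D/\rho \le 3 T_2$. Taking the larger of the two gives $D \le 3\rho\,\max\{T_1, T_2\}$.

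The main technical obstacle is the resolution of the implicit inequality arising in each case: combining the doubling lower bound (polynomial in $D/\rho$ with exponent $\log_2 C$) with the exponential concentration upper bound produces an inequality of the form $D/\rho \le a + b\log(D/\rho)$ rather than a closed-form bound. Extracting an explicit bound on $D/\rho$—while tracking constants precisely—is what forces the peculiar factors $C^4$ in $T_1$ and $C^3 \cdot 3^{\ln C/\ln 2}$ in $T_2$, as well as the overall factor of $3$ coming from scaling $D/\rho$ against the sought bound (one $\rho$ for each of $A$, $\tilde A$, and the strict separation between them). Beyond this bookkeeping, all steps are routine consequences of the hypotheses $\min\{L_1, L_2\} > 1$ (needed to make the iterative Ledoux inequality \eqref{eq:iterative Ledoux Expansion coefficient} actually contractive) and the boundedness of $X$ (needed for the doubling characterization to yield a universal lower bound on $\mu_X$ of small balls).
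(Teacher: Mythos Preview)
Your plan has the right overall shape—split on the measure of a ball and pit the exponential concentration from Theorem~\ref{thm:bound for diam by Ledoux} against a doubling lower bound—but the execution has two genuine gaps.

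First, your choice of radius $\rho$ for $A$ and $\tilde A$ is exactly what produces the implicit inequality $D/\rho \le a + b\ln(D/\rho)$ that you flag but never actually resolve. You cannot get the stated explicit constants from that inequality; the $\ln(D/\rho)$ term does not miraculously collapse into $C^4$ or $C^3 3^{\ln C/\ln 2}$. The paper's key device is to use balls of radius $\tau\,\diam(X)$ with $\tau = 1/3$ (and in Case~1 a companion ball of radius $r = \tau\,\diam(X)$ at a point $z$ at distance $2r + \tau\,\diam(X)$). With this scaling the doubling comparison from Theorem~\ref{thm:characterization of the doubling measure} involves a \emph{constant} radius ratio ($4$ in Case~1, $2/\tau = 6$ in Case~2), so the doubling factor is a pure constant ($C^{-4}$, respectively $C^{-3}3^{-\ln C/\ln 2}$) rather than a power of $D/\rho$. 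This is precisely what makes the inequality for $\diam(X)$ explicit, and it is the step your sketch lacks.

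Second, your Case~II is broken as written. If $A = B(x_0,\rho)$ and you pass to $A^c$, then $(A^c)_t = X$ already for $t \ge \rho$, so the concentration bound $1 - \mu_X((A^c)_t) \le \varepsilon L_2^{1-t/\rho}$ is vacuous and you cannot push $t$ anywhere near $D - 2\rho$. The paper handles this case by taking $A$ to be the complement of the \emph{large} ball $B(x,\tau\,\diam(X))$ (so $\mu_X(A) \ge 1-\varepsilon$), and observing that $B(x,\tau\,\diam(X)/2) \subset (A_{\tau\,\diam(X)/2})^c$. The doubling lower bound on $\mu_X(B(x,\tau\,\diam(X)/2))$ then plays the role your $\tilde A$ was meant to, but now with a fixed radius ratio $2/\tau$, again avoiding the implicit $\ln(D/\rho)$ term.
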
 

\begin{proof}
As shown in Appendix~\ref{appendix:Appendix A}, 
the assumption on the doubling measure implies that Ledoux's expansion coefficient is finite; see \eqref{eq:sufficient condition for finiteness of Ledoux}. 
Theorem~\ref{thm:evaluation of concentration function using Gromov expansion coefficient} 
is a crux to prove this theorem. It follows from 
\eqref{eq:exponential concentration with Ledoux only concerning mainresult} that for all $r \ge 0$ and for all $A \subset X$, such that $\mu_X (A) \ge \varepsilon$,
\begin{equation}
1 - \mu_X (A_r) 
\\
\le 
( 1 - \varepsilon ) \Expansion_{\Ledoux} ( X; 1 - \varepsilon, \rho ) \exp ( - ( \ln \Expansion_{\Ledoux} ( X; 1 - \varepsilon, \rho ) ) r /\! \rho ).
\label{eq:exponential concentration concerning mainresult 2}
\end{equation}
Let us regard $r$ as being sufficiently small and fixed. 
To estimate $\diam (X)$, we consider a ball with radius $\tau \diam (X)$ 
centred
at $x \in X$ attaining $\diam (X)$, where $\tau \le 1$ is a positive parameter of $\diam (X)$. 
We need to observe 
whether
$\tau$ is reasonable. Then, for the desired upper bound to be sharp, 
note that such a point $x$ makes it allowable, for which we refer the reader to Remark~\ref{rem:reasonable parameter}. 
Take now a distinct point $z \in X$ of $x$ such that 
\begin{equation*}
d_X (x, z) = 2r + \tau \diam (X) \, (\, \le \diam (X))
\end{equation*}
for some $\tau$ such that $\tau \diam (X) \le r$. Hence, 
\begin{equation}
\tau \le 1/3.
\label{eq:parameter bound} 
\end{equation}

Hereafter,
we will evaluate the measure of the ball $B (x, \tau \diam (X))$ separately by means of $\varepsilon$. 
It follows 
that
\begin{gather}
\mu_X ( B (z,r) ) \le 1 - \mu_X (B (x, r + \tau \diam (X))) \le 1 - \mu_X ({B (x, \tau \diam (X))}_r)
\label{eq:mu_X ( B (z,r) ) < 1 - mu_X ({B (x, tau diam (X))}_r)},
\\
B (x, \tau \diam (X)) \subset B (z, 2 ( r + \tau \diam (X) )).
\label{eq:B_{{t} diam (X)} (x) subset B_{2 ( {t} diam (X) + r )} (z)}
\end{gather}
We now apply Theorem~\ref{thm:characterization of the doubling measure} to $B(z, 2 ( r + \tau \diam (X) ))$ and $B (z,r)$ to obtain
\begin{equation*}
\frac{\mu_X (B (z, 2 ( r + \tau \diam (X) )))}{\mu_X ( B (z,r) )} \le C^2 \left( \frac{2 ( r + \tau \diam (X) )}{r} \right)^{\ln C / \ln 2},
\end{equation*}
from which it follows that 
\begin{equation}
\mu_X ( B (z,r) ) 
\ge C^{-2} {\left( \frac{2 ( r + \tau \diam (X) )}{r} \right)}^{- \ln C /\! \ln 2} \mu_X (B( z, 2 (r + \tau \diam (X) ) )).
\label{lower bound for the measure of the ball centred z}
\end{equation}
Combining \eqref{lower bound for the measure of the ball centred z} with \eqref{eq:B_{{t} diam (X)} (x) subset B_{2 ( {t} diam (X) + r )} (z)}, 
we have
\begin{equation}
\mu_X ( B (z,r) ) 
\ge C^{-2} {\left( \frac{2 ( r + \tau \diam (X) )}{r} \right)}^{- \ln C /\! \ln 2} \mu_X (B (x, \tau \diam (X))).
\label{eq:lower bound for ball}
\end{equation}

We begin by letting $A = B (x, \tau \diam (X))$ with $\mu_X (B (x, \tau \diam (X)) \ge \varepsilon$.
Then, it follows from \eqref{eq:mu_X ( B (z,r) ) < 1 - mu_X ({B (x, tau diam (X))}_r)} and \eqref{eq:lower bound for ball} that
\begin{equation}
C^{-2} {\left( \frac{2 ( r + \tau \diam(X) )}{r} \right)}^{- \ln C /\! \ln 2} \varepsilon \le 1 - \mu_X ({B (x, \tau \diam (X))}_r) = 1 - \mu_X (A_r).
\label{eq:lower bound for complement of Ar}
\end{equation}
By virtue of \eqref{eq:exponential concentration concerning mainresult 2}, 
plugging it with \eqref{eq:lower bound for complement of Ar}, we obtain
\begin{multline}
C^{-2} {\left( \frac{2 ( r + \tau \diam(X) )}{r} \right)}^{- \ln C /\! \ln 2} \varepsilon 
\\
\le 
( 1 - \varepsilon ) \Expansion_{\Ledoux} ( X; 1 - \varepsilon, \rho ) \exp ( - ( \ln \Expansion_{\Ledoux} ( X; 1 - \varepsilon, \rho ) ) r /\! \rho ).
\label{eq:adaptation of exponential concentration concerning mainresult 2}
\end{multline}
Now, $r$ being arbitrary, especially letting $r = \tau \diam (X)$ in \eqref{eq:adaptation of exponential concentration concerning mainresult 2}, gives
\begin{equation}
\diam (X) \le \frac{\rho \ln \left( C^4 ( 1 - \varepsilon ) {\varepsilon}^{-1} \Expansion_{\Ledoux} ( X; 1 - \varepsilon, \rho ) \right)}{\tau \ln \Expansion_{\Ledoux} ( X; 1 - \varepsilon, \rho )}
\label{eq:estimate for diameter1}
\end{equation}
whenever $\varepsilon \le 1/2$ for the upper bound of \eqref{eq:estimate for diameter1} to be legitimate. 

We next 
consider
the case $\mu_X ( B (x, \tau \diam (X) ) < \varepsilon$. Then, letting $A$ be the complement of $B (x, \tau \diam (X))$
furnishes that 
\begin{gather}
\mu_X (A) \ge 1 - \varepsilon,
\label{eq:mu_X (A) ge varepsilon}
\\
B (x, \tau \diam (X) / 2) \subset ( A_{\tau \diam (X) / 2} )^c
\label{eq:measure of ball and complement}
\end{gather}
because 
\begin{equation*}
A_{\tau \diam (X) / 2} \subset (B (x, \tau \diam (X) - \tau \diam (X) / 2))^c = (B (x, \tau \diam (X) / 2))^c.
\end{equation*}
Similarly to
the case 
where
$\mu_X (B (x, \tau \diam (X))) \ge \varepsilon$, Theorem~\ref{thm:characterization of the doubling measure} allows us to deduce that
\begin{align*}
\frac{\mu_X (B (x,\diam (X)))}{\mu_X ( B (x, \tau \diam (X) / 2) )} 
&= \frac{1}{\mu_X ( B (x, \tau \diam (X) / 2) )}
\\
&\le C^2 \left( \frac{\diam (X)}{ \tau \diam (X) / 2} \right)^{\ln C /\! \ln 2}
\\
&= C^2 \left( \frac{2}{\tau} \right)^{\ln C /\! \ln 2},
\end{align*}
which indicates that
\begin{equation}
C^{-2} \left( \frac{2}{\tau} \right)^{-\ln C /\! \ln 2} 
\le \mu_X ( B (x, \tau \diam (X) / 2) ).
\label{eq:measure of ball centred x}
\end{equation}
Combining \eqref{eq:measure of ball centred x} with \eqref{eq:measure of ball and complement}, we have
\begin{equation*}
C^{-2} \left( \frac{2}{\tau} \right)^{-\ln C /\! \ln 2} 
\le 1 - \mu_X ( A_{\tau \diam (X) / 2 } ). 
\end{equation*}
Furthermore, under \eqref{eq:mu_X (A) ge varepsilon}, \eqref{eq:exponential concentration concerning mainresult 2} leads to
\begin{multline}
C^{-2} \left( \frac{2}{\tau} \right)^{-\ln C /\! \ln 2} 
\\
\le \varepsilon \Expansion_{\Ledoux} ( X; \varepsilon, \rho ) \exp ( - (\ln \Expansion_{\Ledoux} ( X; \varepsilon, \rho )) \tau \diam (X) / 2 \rho ). 
\label{eq:for diameter}
\end{multline}
It follows from \eqref{eq:for diameter} that
\begin{equation}
\diam (X) \le \frac{2 \rho \ln \left( C^3 {\tau}^{- \ln C /\! \ln 2} \varepsilon \Expansion_{\Ledoux} ( X; \varepsilon, \rho ) \right)}{\tau \ln \Expansion_{\Ledoux} ( X; \varepsilon, \rho )}.
\label{eq:estimate for diameter2}
\end{equation}
Finally, 
we 
note
that \eqref{eq:parameter bound} gives more, namely one can eventually conclude from \eqref{eq:estimate for diameter2} that letting $\tau = 1/3$ attains the upper bound for the diameter 
appropriately,
hence
the theorem.
\end{proof}

\begin{rem}
\label{rem:reasonable parameter}
One sees immediately that letting the parameter $\tau$ be maximal makes it allowable that the upper bound for $\diam (X)$ of \eqref{eq:estimate for diameter2} is sharp. In fact, distinct points attaining $\diam (X)$ make $\tau$ be so.
\end{rem}
\begin{rem}
To make \eqref{eq:estimate for diameter1} 
legitimate, 
with
an assumption on $\varepsilon$, it is sufficient to assume that $\varepsilon \le 1/2$ for \eqref{eq:estimate for diameter1} only. In fact, if \eqref{eq:estimate for diameter2} 
does not appear correct,
the desired upper bound will be taken as \eqref{eq:estimate for diameter1}. 
\end{rem}

\begin{rem}
In order to get the sharper upper bound, it is adequate in the proof of Theorem~\ref{thm:bound for diam by Gromov and Ledoux} to adopt not \eqref{eq:exponential concentration concerning mainresult} but \eqref{eq:exponential concentration with Ledoux only concerning mainresult}. 
\end{rem}
\begin{rem}
To 
the
best of our knowledge, Naor et al. were the first 
ones 
to 
show
that the upper bound for the diameter of a certain bounded metric measure space with doubling constant $C \, (> 3)$ 
is in terms of the observable diameter; see Theorem~1.7 of \cite{RefNaor2005} for more rigorous treatments.
\end{rem}
The current subsection will end up with discussing the upper bounds for the diameters of some metric measure spaces and applying Theorem~\ref{thm:bound for diam by Gromov and Ledoux} to a Riemannian manifold. The following is the most 
well-known
diameter estimate and control theorem for a Riemannian manifold, which goes back as far as Myers \cite{RefMyers1935} and \cite{RefMyers1941}, and is currently called `(Bonnet-)Myers theorem'; see e.g., Section~1 of \cite{RefBakry:Ledoux1996} and p.~378 of \cite{RefVillani2009} for a modern treatment:
\begin{thm}[(Bonnet-)Myers theorem]
Let $M$ be an $n \, (\ge 2)$-dimensional complete connected Riemannian manifold with $\Ric (M) \ge K > 0$. Then, we have
\begin{equation}
\diam (M) \le \pi \sqrt{\frac{n-1}{K}}.
\label{eq:Bonnet-Myers}
\end{equation}
Furthermore, $M$ is compact. 

Thereafter, Cheng has shown that the equality of \eqref{eq:Bonnet-Myers} holds if and only if $M$ is isometric to an $n$-dimensional Euclidean sphere ${\Sphere}^n (\delta)$ of radius $\delta > 0$ with constant sectional curvature $K$ given by $(n-1) / {\delta}^2$, which is referred to as the `generalized Toponogov sphere theorem'; see Theorem~3.1 of \cite{RefCheng1975}. 
\end{thm}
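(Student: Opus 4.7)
The plan is to prove the diameter bound by the classical second variation of arc length argument, and then deduce compactness from the Hopf--Rinow theorem; Cheng's rigidity will be cited rather than reproved. I would first fix two distinct points $p, q \in M$ and assume for contradiction that $\ell := d_M(p,q) > \pi \sqrt{(n-1)/K}$. By completeness and the Hopf--Rinow theorem, there exists a minimizing unit-speed geodesic $\gamma:[0,\ell] \to M$ with $\gamma(0)=p$ and $\gamma(\ell)=q$. The key fact I would exploit is that minimality forces the index form (second variation of length) to be non-negative for every proper variation vanishing at the endpoints.

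Next I would choose an orthonormal parallel frame $E_1, \dots, E_{n-1}$ along $\gamma$ in $\gamma'(t)^{\perp}$, and form the test vector fields $V_i(t) := \sin(\pi t / \ell)\, E_i(t)$, which vanish at $t=0$ and $t=\ell$. Computing the second variation $I(V_i, V_i) = \int_0^{\ell} \left( |V_i'|^2 - \langle R(V_i, \gamma')\gamma', V_i \rangle \right) dt$ and summing over $i = 1, \dots, n-1$ yields
\begin{equation*}
0 \le \sum_{i=1}^{n-1} I(V_i, V_i) = \int_0^{\ell} \sin^2(\pi t / \ell) \left( (n-1)\frac{\pi^2}{\ell^2} - \operatorname{Ric}(\gamma', \gamma') \right) dt.
\end{equation*}
Applying the hypothesis $\operatorname{Ric}(\gamma', \gamma') \ge K$ and discharging the integral of $\sin^2$ gives $(n-1)\pi^2 / \ell^2 \ge K$, i.e.\ $\ell \le \pi \sqrt{(n-1)/K}$, contradicting our assumption. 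Since $p,q$ were arbitrary, $\diam(M) \le \pi\sqrt{(n-1)/K}$.

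From the diameter bound, $M$ is bounded, and being complete it follows from Hopf--Rinow that every closed bounded subset of $M$ is compact; applied to $M$ itself (which is closed in itself and bounded), this yields compactness. For the equality case, I would not attempt a self-contained proof: the rigidity statement is a substantial theorem of Cheng, and I would invoke it by reference to \cite{RefCheng1975}. The main obstacle, conceptually, is the equality case: tracing back the second-variation argument one must show that sharpness forces every radial sectional curvature along every minimizing geodesic to equal $K/(n-1)$ and that the resulting infinitesimal model extends globally to a sphere, which requires a delicate argument combining the analysis of conjugate points with a volume-comparison rigidity or a synthetic reconstruction of the metric. This is precisely what Cheng carried out and is the step I would defer to the cited reference.
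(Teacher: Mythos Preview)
Your proof is correct: the second-variation/index-form argument with the sine test fields is the standard route to the diameter bound, and the deduction of compactness via Hopf--Rinow is fine. The paper itself does not prove this theorem at all; it merely states it as classical background and refers the reader to Myers \cite{RefMyers1935,RefMyers1941} and to the treatments in \cite{RefBakry:Ledoux1996} and \cite{RefVillani2009}, with Cheng's rigidity likewise only cited. So you have actually supplied more than the paper does; there is no paper proof to compare against, and your handling of the equality case (defer to \cite{RefCheng1975}) matches exactly what the paper does.
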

The (Bonnet-)Myers theorem for a metric measure space has been established by J.~Lott and C.~Villani, K.-T.~Sturm, and S.~Ohta; see \cite{RefLott:Villani2007} and \cite{RefLott:Villani2009}, \cite{RefSturm2006_1} and \cite{RefSturm2006_2}, and \cite{RefOhta2007} for detailed accounts.
Combining
Theorem~\ref{thm:bound for diam by Gromov and Ledoux} 
with
Claim~\ref{claim:Ledoux's expansion coefficient} 
provides
the
upper bound for the diameter of a compact connected Riemannian manifold in terms of the doubling constant of the Riemannian measure of the manifold and the first non-trivial eigenvalue of the Laplacian on the manifold:
\begin{example}
\label{ex:upper bound for the diameter of a Riemannian manifold}
Let $M$ be an $n$-dimensional compact connected Riemannian manifold with $\Ric (M) \ge 0$. Let $\rho > 0$ be in the context of Claim~\ref{claim:Ledoux's expansion coefficient}. Applying Theorem~\ref{thm:bound for diam by Gromov and Ledoux} to $M$, we computationally derive by Claim~\ref{claim:Ledoux's expansion coefficient}: 
\begin{equation*}
\diam (M) \le
3 \rho \max
\biggl\{\frac{\ln \left( 2^{5n} ( 1 - \varepsilon ) {\varepsilon}^{-1} \right)}{\ln \left( 1 + \lambda_1 (M) \varepsilon {\rho}^2 \right)},
\frac{2 \ln \left( 2^{4n} 3^n \varepsilon \right)}{\ln \left( 1 + \lambda_1 (M) (1 - \varepsilon) {\rho}^2 \right)} \biggr\}.
\end{equation*}
\end{example}
As the preceding study on the current subsection, the diameter upper bounds with spectral (see \cite[Section~3]{RefLedoux2004}) or with logarithmic Sobolev constant (see \cite[Section~4]{RefLedoux2004}) are discussed in the refeences therein. 

\subsection{Lower bound for the diameter}
In the present subsection, we 
will
show that the lower bound for the diameter of a bounded metric measure space is in terms of the Laplace functional on metric measure spaces. 
As with the preceding study on the current subsection, the diameter lower bounds with spectral gap (see \cite[Section~3]{RefLedoux2004}) or with a logarithmic Sobolev constant (see \cite[Section~4]{RefLedoux2004}) are discussed in the refeences therein. 
\begin{defn}[Laplace functional; cf. Section~1.6 of \cite{RefLedoux2001} and Section~1 of \cite{RefLedoux2004}]
\label{def:LaplaceFunctional}
\begin{equation*}
{\Lap}_{(X, d_X, \mu_X)} (\lambda) = \sup \int_{X} \exp \left( \lambda f(x) \right)\,d\mu_{X} 
\quad \text{for all $\lambda > 0$},
\end{equation*}
where the supremum runs over all bounded 1-Lipschitz functions with mean zero on $X$. We call ${\Lap}_{(X, d_X, \mu_X)}$ the \textit{Laplace functional} of $\mu_X$ on $X$.   

In \cite[Section~1.6]{RefLedoux2001}, the Laplace functional is defined for $\lambda = 0$ as well, whereas we will be concerned only with $\lambda > 0$. 
\end{defn}
The Laplace functional allows us to establish the concentration measure phenomenon on a bounded Cartesian metric measure space; see \cite[Section~1.6]{RefLedoux2001}.
%
\begin{prop}[Proposition of \cite{RefLedoux1992}]
\label{prop:Ledoux}
Let $M$ be a compact Riemannian manifold with Ricci curvature $\Ric (M)$ bounded below from a positive constant. Under the hypotheses of Definition~\ref{def:LaplaceFunctional}, Ledoux shows that
\begin{equation*}
\int_{M} \exp \left( \lambda f(x) \right)\,d\mu_{M} \le \exp ({\lambda}^2 / 2 \Ric (M)).
\end{equation*}
\end{prop}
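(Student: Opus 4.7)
The plan is to deploy Ledoux's heat-semigroup argument, reducing the desired exponential-of-quadratic bound to a sharp logarithmic Sobolev inequality (LSI). Write $K := \Ric(M) > 0$ for the positive lower bound. First I would establish the LSI for $\mu_M$ with constant $2/K$: for every smooth positive function $g$,
\begin{equation*}
\int_M g^2 \log g^2\, d\mu_M - \left(\int_M g^2\, d\mu_M\right) \log \int_M g^2\, d\mu_M \le \frac{2}{K}\int_M |\nabla g|^2\, d\mu_M.
\end{equation*}
Following Bakry--Emery, I would derive this from Bochner's formula, which yields the pointwise gradient estimate $|\nabla P_t g|^2 \le e^{-Kt}\, P_t(|\nabla g|^2)$ for the heat semigroup $P_t$; integrating this estimate along the interpolation $t \mapsto P_t\!\left(P_{T-t}(g^2)\log P_{T-t}(g^2)\right)$ for $t \in [0,T]$ and letting $T \to \infty$ produces the LSI above.

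Next, I would run Herbst's argument. Set $Z(\lambda) := \int_M e^{\lambda f}\, d\mu_M$ and apply the LSI to $g := e^{\lambda f/2}$. Since $f$ is 1-Lipschitz, $|\nabla f|\le 1$ almost everywhere, whence $|\nabla g|^2 \le (\lambda^2/4)\, e^{\lambda f}$. The entropy on the left is
\begin{equation*}
\lambda \int_M f\, e^{\lambda f}\, d\mu_M - Z(\lambda)\log Z(\lambda) = \lambda Z'(\lambda) - Z(\lambda)\log Z(\lambda),
\end{equation*}
so the LSI delivers $\lambda Z'(\lambda) - Z(\lambda)\log Z(\lambda) \le (\lambda^2/(2K))\, Z(\lambda)$, which rearranges (divide by $\lambda^2 Z(\lambda)$) to
\begin{equation*}
\frac{d}{d\lambda}\!\left(\frac{\log Z(\lambda)}{\lambda}\right) \le \frac{1}{2K}.
\end{equation*}

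Finally, I would integrate this differential inequality from $0^+$ to $\lambda$. The mean-zero hypothesis $\int_M f\, d\mu_M = 0$ gives $\log Z(\lambda)/\lambda \to \int_M f\, d\mu_M = 0$ as $\lambda \to 0^+$, so $\log Z(\lambda) \le \lambda^2/(2K)$, which is exactly the claimed bound. The principal obstacle is the passage from the Bakry--Emery pointwise gradient estimate to the LSI with sharp constant $2/K$: this is where Bochner's formula does the real work, and any slack there propagates directly to the final exponent. Once the LSI is in hand, the Herbst step and the Lipschitz substitution are routine, the only technical point being the differentiability of $\lambda \mapsto Z(\lambda)$ and the legitimacy of the LSI at $g = e^{\lambda f/2}$, both secured by compactness of $M$ and boundedness of $f$.
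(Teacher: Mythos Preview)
The paper does not supply its own proof of this proposition; it simply quotes the result from Ledoux's 1992 note and moves on. Your argument via the Bakry--\'Emery logarithmic Sobolev inequality followed by Herbst's differential inequality is correct and recovers the sharp exponent $\lambda^2/(2K)$. (One minor slip: the commutation estimate should read $|\nabla P_t g|^2 \le e^{-2Kt}\, P_t(|\nabla g|^2)$, with $2K$ rather than $K$ in the exponent; this is what produces the LSI constant $2/K$ after integrating in $t$.)

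For comparison, Ledoux's original 1992 argument---the one the paper is citing---is more direct and bypasses the LSI entirely. One sets $\Phi(t) = \int_M e^{\lambda P_t f}\, d\mu_M$, observes $\Phi(\infty) = 1$ because $P_t f \to \int_M f\, d\mu_M = 0$, and computes
\[
-\Phi'(t) \;=\; \lambda^2 \int_M |\nabla P_t f|^2\, e^{\lambda P_t f}\, d\mu_M \;\le\; \lambda^2 e^{-2Kt}\, \Phi(t)
\]
using the pointwise gradient bound $|\nabla P_t f| \le e^{-Kt}$ for $1$-Lipschitz $f$ (itself a consequence of Bochner's formula). Integrating $-\Phi'/\Phi$ over $[0,\infty)$ gives $\log \Phi(0) \le \lambda^2/(2K)$ at once. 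Your route is longer but more modular: it cleanly separates the geometric input (curvature $\Rightarrow$ LSI) from the analytic step (LSI $\Rightarrow$ Laplace-transform bound), which is advantageous whenever one wants to replace the Ricci hypothesis by an abstract LSI assumption. Ledoux's direct argument, on the other hand, is shorter and makes the role of the heat flow more transparent.
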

It follows immediately from Proposition~\ref{prop:Ledoux} that
\begin{cor}
\begin{equation*}
{\Lap}_{(M, d_M, \mu_M)} (\lambda) \le \exp ({\lambda}^2 / 2 \Ric (M)).
\end{equation*}
\end{cor}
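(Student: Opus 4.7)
The statement to prove is essentially a direct reformulation of Proposition~\ref{prop:Ledoux} in the language of the Laplace functional introduced in Definition~\ref{def:LaplaceFunctional}, so my plan is very short. The key observation is that the bound in Proposition~\ref{prop:Ledoux} depends only on $\lambda$ and $\Ric(M)$, not on the particular test function $f$, whereas the Laplace functional is defined as the supremum of $\int_M \exp(\lambda f)\,d\mu_M$ over all bounded $1$-Lipschitz functions on $M$ with mean zero. Therefore it should suffice to pass to the supremum in Proposition~\ref{prop:Ledoux}.

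More concretely, I would first verify that Proposition~\ref{prop:Ledoux} applies uniformly to every admissible test function in Definition~\ref{def:LaplaceFunctional}: the hypotheses of Ledoux's proposition are that $f$ is bounded, $1$-Lipschitz on $M$, and $\int_M f\,d\mu_M = 0$, which is precisely the class over which the supremum in the definition of ${\Lap}_{(M,d_M,\mu_M)}(\lambda)$ is taken. Thus for every such $f$ and every $\lambda > 0$,
\begin{equation*}
\int_{M} \exp(\lambda f(x))\,d\mu_M \le \exp(\lambda^2/2\Ric(M)).
\end{equation*}

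Taking the supremum of the left-hand side over all bounded $1$-Lipschitz $f$ with $\int_M f\,d\mu_M = 0$, and noting that the right-hand side is a constant in $f$, immediately yields
\begin{equation*}
{\Lap}_{(M,d_M,\mu_M)}(\lambda) = \sup_f \int_{M} \exp(\lambda f(x))\,d\mu_M \le \exp(\lambda^2/2\Ric(M)),
\end{equation*}
which is the claimed inequality. There is no genuine obstacle here: the entire content of the corollary is bookkeeping between the pointwise (per-function) estimate of Ledoux and the variational (supremum) definition of the Laplace functional. The only thing worth flagging is the standing positivity $\Ric(M) > 0$ inherited from Proposition~\ref{prop:Ledoux}, which is needed for the exponent on the right to be finite and which matches the hypothesis that $\Ric(M)$ is bounded below by a positive constant.
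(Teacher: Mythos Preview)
Your proposal is correct and matches the paper's own treatment: the paper simply states that the corollary ``follows immediately from Proposition~\ref{prop:Ledoux}'', and your argument spells out precisely this passage to the supremum over admissible $f$.
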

\begin{thm}
\label{thm:Lower bound}
Let $X$ be a bounded metric measure space. 
For each $\lambda > 0$: 
If $f$ is a bounded Lipschitz function with mean zero on $X$, then 
\begin{equation}
\diam(X) \ge \frac{\ln \int_{X} \exp \left( \lambda f(x) \right)\,d\mu_{X}}{\lambda \| f \|_{\text{Lip}}}.
\label{eq:lower bound for the diameter}
\end{equation}
In particular, if $f$ is a bounded 1-Lipschitz function with mean zero on $X$, then 
\begin{equation}
\diam (X) \ge \frac{\sqrt{2 \ln {\Lap}_{(X, d_X, \mu_X)} (\lambda)}}{\lambda}. 
\label{eq:lower bound for the diamete under 1-Lipschitz}
\end{equation} 
Thus under the hypothesis 
same as \eqref{eq:lower bound for the diamete under 1-Lipschitz}, we deduce that
\begin{equation}
\diam (X) \ge \frac{1}{\lambda} \min \biggl\{ \ln {\Lap}_{(X, d_X, \mu_X)} (\lambda), \sqrt{2 \ln {\Lap}_{(X, d_X, \mu_X)} (\lambda)} \biggr\}.
\label{eq:result on the upper bound for the diameter}
\end{equation}
\end{thm}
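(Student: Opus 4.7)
The strategy is a two-step argument, one for each displayed inequality, after which the final estimate \eqref{eq:result on the upper bound for the diameter} follows by taking the weaker of the two (the minimum). Both steps rest on the same elementary observation: a Lipschitz function with mean zero on a bounded metric measure space is controlled pointwise by the diameter.

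\textbf{Step 1 (first inequality \eqref{eq:lower bound for the diameter}).} Let $f$ be a bounded Lipschitz function with $\int_X f\,d\mu_X = 0$. For every $x \in X$, I rewrite $f(x) = \int_X (f(x) - f(y))\,d\mu_X(y)$, so by the Lipschitz property
\begin{equation*}
|f(x)| \le \int_X |f(x)-f(y)|\,d\mu_X(y) \le \|f\|_{\text{Lip}} \int_X d_X(x,y)\,d\mu_X(y) \le \|f\|_{\text{Lip}}\,\diam(X).
\end{equation*}
Exponentiating and integrating gives $\int_X \exp(\lambda f)\,d\mu_X \le \exp(\lambda\|f\|_{\text{Lip}}\diam(X))$, and taking $\ln$ and dividing by $\lambda\|f\|_{\text{Lip}}$ yields \eqref{eq:lower bound for the diameter}.

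\textbf{Step 2 (second inequality \eqref{eq:lower bound for the diamete under 1-Lipschitz}).} For 1-Lipschitz $f$ with mean zero, Step~1 gives $|f(x)| \le M := \diam(X)$. I would then invoke the standard convexity bound (the one used in Hoeffding's lemma): for $|t| \le M$,
\begin{equation*}
\exp(\lambda t) \le \cosh(\lambda M) + \frac{t}{M}\sinh(\lambda M),
\end{equation*}
which follows because $\exp$ is convex and the right-hand side is the linear interpolant at $t=\pm M$. Integrating against $\mu_X$, the mean-zero hypothesis kills the linear term, leaving $\int_X \exp(\lambda f)\,d\mu_X \le \cosh(\lambda\diam(X))$. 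Comparing power series term by term gives $\cosh(u) \le \exp(u^2/2)$, so
\begin{equation*}
\int_X \exp(\lambda f)\,d\mu_X \le \exp\bigl(\lambda^2 \diam(X)^2 / 2\bigr).
\end{equation*}
Taking the supremum over all admissible 1-Lipschitz $f$ and then $\ln$ yields $\ln \Lap_{(X,d_X,\mu_X)}(\lambda) \le \lambda^2\diam(X)^2/2$, whence \eqref{eq:lower bound for the diamete under 1-Lipschitz}.

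\textbf{Step 3 (combining).} For 1-Lipschitz $f$, Step~1 also gives $\diam(X) \ge \lambda^{-1}\ln \int_X \exp(\lambda f)\,d\mu_X$; taking the supremum over such $f$ yields $\diam(X) \ge \lambda^{-1}\ln \Lap_{(X,d_X,\mu_X)}(\lambda)$. Since both $\lambda^{-1}\ln \Lap$ and $\lambda^{-1}\sqrt{2\ln \Lap}$ are simultaneously valid lower bounds, their minimum is as well, establishing \eqref{eq:result on the upper bound for the diameter}.

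The proof is essentially routine once the pointwise bound $|f| \le \|f\|_{\text{Lip}}\diam(X)$ is in place; no real obstacle is present. The only mildly delicate point is the passage from $\cosh$ to a Gaussian upper bound — a standard coefficient-wise comparison — which is what separates the linear bound in Step~1 from the quadratic bound in Step~2 and thus explains the two qualitatively different regimes combined in \eqref{eq:result on the upper bound for the diameter}.
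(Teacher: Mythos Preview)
Your proof is correct, and it takes a somewhat different route from the paper's. The paper follows Ledoux's symmetrization argument: it first writes
\[
\int_X e^{\lambda f}\,d\mu_X = \int_X e^{\lambda f}\,d\mu_X \cdot e^{-\lambda \int_X f\,d\mu_X} \le \iint_{X\times X} e^{\lambda(f(x)-f(y))}\,d\mu_X\,d\mu_X
\]
via Jensen and Fubini, and then bounds the double integral using $|f(x)-f(y)|\le \|f\|_{\text{Lip}}\,\diam(X)$ (for the first inequality) and the convexity/Hoeffding bound applied to the symmetric integrand $f(x)-f(y)$ (for the second). You instead bypass the symmetrization entirely by extracting the pointwise estimate $|f(x)|\le \|f\|_{\text{Lip}}\,\diam(X)$ directly from the mean-zero hypothesis, and then apply the Hoeffding bound to $f(x)$ itself in a single integral. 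Your route is more elementary and slightly shorter for this particular statement; the paper's symmetrization is the classical device from Ledoux's Proposition~1.16 and has the advantage of not relying on a pointwise sup-norm bound for $f$, which makes it portable to settings where such a bound is unavailable but increments $f(x)-f(y)$ are still controlled.
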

\begin{proof}
Our proof starts with estimating an exponential integral of a bounded Lipschitz function $f$ with mean zero on $X$. The strategy of the proof adopts that of Ledoux \cite[Proposition~1.16]{RefLedoux2001}.
However, the argument of the proof by Ledoux is not given in detail. Thus we will explain the argument in detail.
\begin{align}
&\int_{X} \exp \left( \lambda f(x) \right)\,d\mu_{X} 
\notag
\\
&= \int_{X} \exp \left( \lambda f(x) \right)\,d\mu_{X} \exp \left( - \lambda \int_{X} f(y)\,d\mu_{X} \right) 
\label{eq:by mean 0}
\\
&\le \int_{X} \exp \left( \lambda f(x) \right)\,d\mu_{X} \int_{X} \exp \left( - \lambda f(y) \right)\,d\mu_{X} 
\quad \text{by Jensen's inequality} 
\notag
\\
&=
\iint_{X \times X} \exp \left( \lambda \left( f(x) - f(y) \right) \right)\,d\mu_{X} d\mu_{X} 
\quad 
\text{by Fubini's inequality},
\label{eq:by Fubini}
\end{align}
where, in \eqref{eq:by mean 0}, we have used the standing assumption that the mean of $f$ is equal to zero,
and hence
\begin{equation}
\int_{X} \exp \left( \lambda f(x) \right)\,d\mu_{X} \le \iint_{X \times X} \exp \left( \lambda \left( f(x) - f(y) \right) \right)\,d\mu_{X} d\mu_{X}.
\label{eq:evaluation by Jensen}
\end{equation}
We see that 
\begin{equation*}
\iint_{X \times X} \exp \left( \lambda \left( f(x) - f(y) \right) \right)\,d\mu_{X} d\mu_{X} \le \exp \left( \lambda \| f \|_{\text{Lip}} \diam (X) \right)
\end{equation*}
because $f$ is Lipschitz on $X$.
Consequently,
\begin{equation}
\int_{X} \exp \left( \lambda f(x) \right)\,d\mu_{X} \le \exp \left( \lambda \| f \|_{\text{Lip}} \diam (X) \right),
\label{eq:ForEstimateByLaplaceFunctional}
\end{equation}
from which \eqref{eq:lower bound for the diameter} follows. In particular, if $f$ is 1-Lipschitz, then by \eqref{eq:ForEstimateByLaplaceFunctional}, we get
\begin{equation*}
{\Lap}_{(X, d_X, \mu_X)} (\lambda) \le \exp \left( \lambda \diam (X) \right).
\end{equation*}
Hence one sees 
that
\begin{equation}
\diam(X) \ge \frac{\ln {\Lap}_{(X, d_X, \mu_X)} (\lambda)}{\lambda}. 
\label{eq:lower bound for the diamete under 1-Lipschitz mean 0}
\end{equation}

Next, we consider the case 
where
$f$ is a bounded 1-Lipschitz with mean zero on $X$. 
The subsequent claim thus
is the following:
\begin{claim}
\label{claim:convexity}
Under the hypotheses of Theorem~\ref{thm:Lower bound},
\begin{equation*}
\exp (\lambda ( f(x) - f(y) )) \le \cosh (\lambda \diam (X)) + \frac{f(x) - f(y)}{\diam (X)} \sinh (\lambda \diam (X)).
\end{equation*}
\end{claim}
\begin{proof}
From convexity of the exponential function, it follows 
for all $\tau \in \R$ 
and 
for all $x \in \R$ 
such that $|x| \le 1$ that
\begin{align*}
\exp (\tau x) 
&\le \frac{1 + x}{2} \exp (\tau) + \frac{1 - x}{2} \exp (-\tau)
\\
&= \frac{1 + x}{2} \left( \Bigl( \frac{\exp (\tau) + \exp (-\tau)}{2} \Bigr) + \Bigl( \frac{\exp (\tau) - \exp (-\tau)}{2} \Bigr) \right) 
\\
&\qquad + \frac{1 - x}{2} \left( \Bigl( \frac{\exp (\tau) + \exp (-\tau)}{2} \Bigr) - \Bigl( \frac{\exp (\tau) - \exp (-\tau)}{2} \Bigr) \right)
\\
&= \frac{1 + x}{2} \left( \cosh (\tau) + \sinh (\tau) \right) + \frac{1 - x}{2} \left( \cosh (\tau) - \sinh (\tau) \right) 
\\
&= \cosh (\tau) + x \sinh (\tau),
\end{align*}
namely
\begin{equation}
\exp (\tau x) \le \cosh (\tau) + x \sinh (\tau).
\label{eq:covexity}
\end{equation}
Since $f$ is 1-Lipschitz, we readily see that
\begin{equation}
\frac{f(x) - f(y)}{\diam (X)} \le \| f \|_{\text{Lip}} \le 1.
\label{eq:1-Lipschitz constant for convex}
\end{equation}
From \eqref{eq:1-Lipschitz constant for convex}, we see that the current argument is in agreement with \eqref{eq:covexity}. Therefore, Claim~\ref{claim:convexity} holds.
\end{proof}
On account of Claim~\ref{claim:convexity}, we can now continue estimating \eqref{eq:by Fubini} as follows:
\begin{align*}
&\iint_{X \times X} \exp \left( \lambda \left( f(x) - f(y) \right) \right)\,d\mu_{X} d\mu_{X}
\\
&\le 
\iint_{X \times X} \cosh \left( \lambda \diam(X) \right)\,d\mu_{X} d\mu_{X} 
\\
&\phantom{=}
+ \frac{\sinh ( \lambda \diam (X))}{\diam (X)} \iint_{X \times X} \left( f(x) - f(y) \right)\,d\mu_{X} d\mu_{X}
\\
&= \iint_{X \times X} \cosh \left( \lambda \diam(X) \right)\,d\mu_{X} d\mu_{X} 
\\
&= \cosh (\lambda \diam(X))
\\
&\le \sum_{i = 0}^{\infty} \frac{(\lambda \diam (X))^{2i}}{2^i i !} 
\\
&= \sum_{i = 0}^{\infty} \frac{((\lambda \diam (X))^2 / 2)^i}{i!}
\\
&= \exp \left((\lambda \diam (X))^2 / 2\right),
\end{align*}
consequently,
\begin{equation}
\iint_{X \times X} \exp \left( \lambda \left( f(x) - f(y) \right) \right)\,d\mu_{X} d\mu_{X} \le \exp \left((\lambda \diam (X))^2 / 2\right).
\label{eq:upper bound for Bobkov:Ledoux}
\end{equation}
For this reason, we conclude by adding \eqref{eq:evaluation by Jensen} and \eqref{eq:upper bound for Bobkov:Ledoux} that 
\begin{equation*}
\int_{X} \exp \left( \lambda f(x) \right)\,d\mu_{X} \le \exp \left((\lambda \diam (X))^2 / 2\right),
\end{equation*}
from which \eqref{eq:lower bound for the diamete under 1-Lipschitz} follows immediately.
By \eqref{eq:lower bound for the diamete under 1-Lipschitz} and \eqref{eq:lower bound for the diamete under 1-Lipschitz mean 0}, one can arrive at \eqref{eq:result on the upper bound for the diameter}. This therefore proves the theorem. 
\end{proof}

\section{Conclusions}
\label{sec:concluding remarks and discussion}
We 
will
conclude the body of the paper 
by mentioning 
the
work in progress.
The overall aim of 
this
advanced study is to evaluate Gromov's and Ledoux's expansion coefficients in terms of distances and measures on a metric measure space. The study has been motivated by the work 
of
\cite{RefBobkov:Ledoux1997} and \cite{RefChung:Grigoryan:Yau1996}, \cite{RefChung:Grigoryan:Yau1997} and \cite{RefFriedman:Tillich2000}, who have actually derived some bounds for the spectrum 
of the Laplacian on a compact connected Riemannian manifold (as a continuous space) and a graph (as a discrete space). 

The above-mentioned concentration inequality 
stated in Propositions~\ref{prop:concentration inequality} and \ref{prop:seaquential concentration inequality} 
and the Laplace functional
stated in Definition~\ref{def:LaplaceFunctional} 
will play a pivotal role in our advanced work, which will be discussed elsewhere.
\appendix
\section{Sufficient condition for Ledoux's expansion coefficient to be 
$\infty > \Expansion_{\Ledoux} > 1$}
\label{appendix:Appendix A}
Up to Subsection~\ref{subsection:upper bound for the diameter} insomuch as
we have concerned ourselves with 
Ledoux's expansion coefficient to be $\infty > \Expansion_{\Ledoux} ( X; 1 - \varepsilon, \rho ) > 1$, in the present section, we shall address the sufficient condition for the quantity to be so. 

Before the discussion, let us set up the doubling measure, which is often assumed in geometry and analysis on metric measure spaces. We will denote by $B(x, r)$ a ball in $X$ with 
centre
$x \in X$ and radius $r > 0$. 
\begin{defn}[Definition~5.2.1 of \cite{RefAmbrosio:Tilli2004}]
Let $\mathcal{B} (X)$ denote a $\sigma$-algebra of all Borel subsets of $X$. A measure $\mu_X: \mathcal{B} (X) \to [0, + \infty]$ is said to be \textit{doubling} if $\mu_X$ is finite on bounded sets and there exits a constant $C_{\mu_X}$ with respect to $\mu_X$ such that
\begin{equation}
\mu_X ( B (x, 2 r) ) \le C_{\mu_X} \mu_X ( B (x, r) ) 
\quad \mbox{for all $x \in X$ and $r > 0$}.
\label{eq:doubling}
\end{equation}
The best constant $C_{\mu_X} (\ge 1)$ in \eqref{eq:doubling} is called a \textit{doubling constant}, which will be briefly written by $C$. 
\end{defn}
It follows from 
iteration of
\eqref{eq:doubling} that for an arbitrary integer $k \ge 0$
\begin{equation*}
\mu_X ( B ( x, 2^k r ) ) \le C^k \mu_X ( B ( x, r ) ).
\end{equation*}

\begin{example}
A typical example of the doubling measure is Lebesgue measure on a Euclidean space.
\end{example}

The following theorem characterizes the doubling measure $\mu_X$ on metric space $( X, d_X )$ by providing a lower bound for the decay of $r \mapsto \mu_X (B (x, r))$ for $\mu_X$. The characterization will play a 
crucial
role in the estimate for the diameter of $(X, d_X, \mu_X)$ with $\infty > \Expansion_{\Ledoux} ( X; 1 - \varepsilon, \rho ) > 1$, see Theorem~\ref{thm:Estimate for the diameter of mm-sp}.
\begin{thm}[Theorem~5.2.2 of \cite{RefAmbrosio:Tilli2004}]
\label{thm:characterization of the doubling measure}
Let a measure $\mu_X: \mathcal{B} (X) \to [0, + \infty]$ be finite on bounded sets. Then, $\mu_X$ is doubling if and only if there exists a constant $C > 0$ such that
\begin{equation*}
\frac{\mu_X ( B (y, r_2) )}{\mu_X ( B (x, r_1) )} \le C^2 \left( \frac{r_2}{r_1} \right)^{\ln C / \ln 2}
\end{equation*}
for each $r_i$, $i = 1, 2$ such that $0 < r_1 \le r_2$ and all $x, y \in X$ such that $x \in B (y, r_2)$.
\end{thm}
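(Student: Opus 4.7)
The plan is to prove the equivalence by establishing each direction separately, with the forward implication being the substantive one.

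For the reverse direction, I would start with the easier implication: assuming the inequality holds with some constant $C > 0$, specialize to $y = x$, $r_1 = r$, $r_2 = 2r$ (note $x \in B(x, 2r)$ trivially). This immediately yields
\begin{equation*}
\mu_X(B(x, 2r)) \le C^2 \cdot 2^{\ln C / \ln 2} \mu_X(B(x, r)) = C^3 \mu_X(B(x, r)),
\end{equation*}
so $\mu_X$ is doubling with doubling constant at most $C^3$.

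For the forward direction, assume $\mu_X$ is doubling with constant $C$ and fix $x, y$ with $x \in B(y, r_2)$ and $0 < r_1 \le r_2$. The key geometric observation is the inclusion $B(y, r_2) \subset B(x, 2 r_2)$: indeed, for $z \in B(y, r_2)$, the triangle inequality gives $d_X(z, x) \le d_X(z, y) + d_X(y, x) < 2 r_2$ since $x \in B(y, r_2)$. Therefore it suffices to bound $\mu_X(B(x, 2 r_2))/\mu_X(B(x, r_1))$.

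Next, choose the integer $k \ge 1$ with $2^{k-1} r_1 < 2 r_2 \le 2^k r_1$, so that $B(x, 2 r_2) \subset B(x, 2^k r_1)$. Iterating the doubling condition $k$ times yields $\mu_X(B(x, 2^k r_1)) \le C^k \mu_X(B(x, r_1))$. The choice of $k$ forces $k \le 2 + \log_2(r_2/r_1)$, hence
\begin{equation*}
C^k \le C^{2 + \log_2(r_2/r_1)} = C^2 \cdot (r_2/r_1)^{\log_2 C} = C^2 \left(\frac{r_2}{r_1}\right)^{\ln C/\ln 2},
\end{equation*}
which combined with the containment above gives the claimed inequality.

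The main obstacle, such as it is, lies in packaging the two constants correctly: the ``$C$'' appearing in the statement of the inequality does double duty as both the doubling constant and the base of the exponent, so one must track that the same $C$ handles the multiplicative prefactor (via the iteration count $k$) and the growth rate (via $\log_2 C$). Beyond this bookkeeping, no nontrivial geometric or measure-theoretic input is needed — the proof rests solely on the triangle inequality and iterated doubling.
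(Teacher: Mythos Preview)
Your proof is correct and is essentially the standard argument (the one found in Ambrosio--Tilli). Note, however, that the paper itself does not supply a proof of this theorem: it is stated with attribution to \cite{RefAmbrosio:Tilli2004} and used as a black box, so there is no in-paper argument to compare against.
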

%
\begin{prop}
\label{prop:Expansion coefficient and doubling const}
We first observe the assumption 
that
$\Expansion_{\Ledoux} ( X; 1 - \varepsilon, \rho ) < \infty$.
We will verify that the doubling measure makes $\Expansion_{\Ledoux} ( X; 1 - \varepsilon, \rho )$ finite. Let $\mu_X$ be a doubling measure on $( X, d_X )$, whose doubling constant is denoted by $C$. Fix a positive numerical parameter $\varepsilon < 1$ arbitrarily. 
For some $\rho > 0$ such that $\mu_X ( B ( x, 2 \rho ) ) \le 1 - \varepsilon$ for all $x \in X$,
combining the context regarding the doubling constant with \eqref{eq:expansion coefficient by Ledoux property}, we deduce that
\begin{equation}
\Expansion_{\Ledoux} ( X; 1 - \varepsilon, \rho ) 
\le 
\frac{\mu_X ( B ( x, 2 \rho ) )}{\mu_X ( B ( x, \rho ) )}
\le 
C,
\label{eq:sufficient condition for finiteness of Ledoux}
\end{equation}
as claimed.
\end{prop}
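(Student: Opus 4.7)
My plan is to prove the proposition by producing an explicit admissible test set in Ledoux's variational definition of $\Expansion_{\Ledoux}(X; 1-\varepsilon, \rho)$, and then applying the doubling inequality to that test set. The guiding observation is that property \eqref{eq:expansion coefficient by Ledoux property} tells us $\Expansion_{\Ledoux}(X; 1-\varepsilon, \rho) \le \mu_X(B_\rho)/\mu_X(B)$ for every Borel $B$ with $\mu_X(B_\rho) \le 1-\varepsilon$; hence any single admissible $B$ gives an upper bound on $\Expansion_{\Ledoux}$.

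The first step is to fix any $x \in X$ and take the ball $B := B(x, \rho)$ as the test set. I need to check it is admissible, i.e.\ $\mu_X(B_\rho) \le 1-\varepsilon$. For this I would use the triangle inequality to establish the set inclusion
\begin{equation*}
\bigl(B(x,\rho)\bigr)_\rho \subset B(x, 2\rho),
\end{equation*}
since any $y$ within distance $\rho$ of some $z \in B(x,\rho)$ satisfies $d_X(y,x) \le d_X(y,z) + d_X(z,x) \le 2\rho$. Combined with the standing hypothesis $\mu_X(B(x, 2\rho)) \le 1-\varepsilon$ for all $x$, this yields $\mu_X(B_\rho) \le \mu_X(B(x,2\rho)) \le 1-\varepsilon$, so $B$ is admissible in the defining set of \eqref{eq:expansion coefficient by Ledoux}.

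The second step is to apply the doubling property at this $x$ and this $\rho$: $\mu_X(B(x,2\rho)) \le C\, \mu_X(B(x,\rho))$. Together with the inclusion above, this gives
\begin{equation*}
\frac{\mu_X(B_\rho)}{\mu_X(B)} \le \frac{\mu_X(B(x,2\rho))}{\mu_X(B(x,\rho))} \le C,
\end{equation*}
and since $B$ is admissible we conclude $\Expansion_{\Ledoux}(X; 1-\varepsilon, \rho) \le C$, which is \eqref{eq:sufficient condition for finiteness of Ledoux}.

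I do not expect any serious obstacle here. The one point that deserves care is verifying admissibility of $B(x,\rho)$—i.e.\ the inclusion $(B(x,\rho))_\rho \subset B(x,2\rho)$—because this uses only the triangle inequality and not any stronger structure like geodesic completeness. (Note that the reverse inclusion may fail, for instance in discrete metrics, but it is not needed here.) Beyond this, the argument is a direct substitution of the test ball into \eqref{eq:expansion coefficient by Ledoux property} followed by one application of \eqref{eq:doubling}.
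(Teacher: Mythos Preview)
Your argument is correct and follows exactly the route the paper indicates: take $B=B(x,\rho)$ as a test set, use the inclusion $(B(x,\rho))_\rho\subset B(x,2\rho)$ to verify admissibility from the hypothesis $\mu_X(B(x,2\rho))\le 1-\varepsilon$, and then apply the doubling inequality together with \eqref{eq:expansion coefficient by Ledoux property} to obtain \eqref{eq:sufficient condition for finiteness of Ledoux}. You simply make explicit the triangle-inequality inclusion that the paper leaves implicit; otherwise the approaches coincide.
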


Our next claim is to discuss the sufficient condition on $\Expansion_{\Ledoux} ( X; 1 - \varepsilon, \rho ) > 1$, for which the \textit{Poincar{\'e} inequality} (see \eqref{eq:Poincare Inequality} below) on metric measure spaces plays a crucial role.
%
%
Before stating the condition to be observed, we give the following two quantities: For all local Lipschitz real-valued functions $f$ on $( X, d_X )$,
\begin{equation*}
\mathrm{Var}_{\mu_{X}} (f) := \int_X f^2\,d \mu_{X} - \left( \int_X f\,d \mu_{X} \right)^2
\end{equation*}
and
\begin{equation*}
|\nabla f| (x) := \limsup_{y \to x} \frac{|f(x) - f(y)|}{d_{X} (x, y)},
\end{equation*}
which we call the \textit{variance} of $f$ with respect to $\mu_X$ and the \textit{length of the gradient} of $f$ at the point $x \in X$, respectively.

The Poincar{\'e} inequality to be stated ensures that $\Expansion_{\Ledoux} ( X; 1 - \varepsilon, \rho ) > 1$:
\begin{thm}[Corollary~3.2 of \cite{RefLedoux2001}]
\label{thm:Poincare}
Let $( X, d_X, \mu_X )$ satisfy the Poincar{\'e} inequality with respect to the generalized length of gradient $|\nabla f|$ for all locally Lipschitz real-valued functions $f$ on $(X, d_X)$:
\begin{equation}
\Var_{\mu_{X}} (f) \le C \int_X |\nabla f|^2\,d \mu_{X}
\label{eq:Poincare Inequality}
\end{equation}
for some universal numerical constant $C > 0$.
Then we have $\Expansion_{\Ledoux} ( X; 1 - \varepsilon, \rho ) > 1$. 
\end{thm}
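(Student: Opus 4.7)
The plan is to feed a natural Lipschitz cut-off associated to any candidate Borel set $B$ into the Poincar\'e inequality \eqref{eq:Poincare Inequality} and read off a strict lower bound, uniform in $B$, on the ratio $\mu_X(B_\rho)/\mu_X(B)$. By Definition~\ref{def:Expansion coefficient by Ledoux} this bound is exactly what one needs to conclude $\Expansion_{\Ledoux}(X; 1-\varepsilon, \rho) > 1$.

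Concretely, fix a Borel set $B\subset X$ with $a := \mu_X(B) > 0$ and $b := \mu_X(B_\rho) \le 1-\varepsilon$ (sets with $\mu_X(B) = 0$ may be ignored, as they impose no constraint on the supremum in \eqref{eq:expansion coefficient by Ledoux}). I would set
\begin{equation*}
f(x) := \max\!\left\{\, 0,\; 1 - d_X(x, B)/\rho \,\right\},
\end{equation*}
so that $f\equiv 1$ on $B$, $f\equiv 0$ off $B_\rho$, and $f$ is globally $(1/\rho)$-Lipschitz, hence locally Lipschitz. From $\mathbf{1}_B \le f \le \mathbf{1}_{B_\rho}$ one gets $\int_X f^2\,d\mu_X \ge a$ and $(\int_X f\,d\mu_X)^2 \le b^2$, so $\Var_{\mu_X}(f) \ge a - b^2$. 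On the gradient side, $|\nabla f|$ vanishes on the interior of $B$ and off $B_\rho$ and is pointwise bounded by $1/\rho$ elsewhere, which yields
\begin{equation*}
\int_X |\nabla f|^2\,d\mu_X \;\le\; \frac{b-a}{\rho^2}.
\end{equation*}
Plugging these two estimates into \eqref{eq:Poincare Inequality} gives the algebraic inequality $(a - b^2)\rho^2 \le C(b-a)$, which rearranges into
\begin{equation*}
\frac{b}{a} \;\ge\; \frac{\rho^2 + C}{C + b\rho^2} \;\ge\; \frac{\rho^2 + C}{C + (1-\varepsilon)\rho^2} \;=\; 1 + \frac{\varepsilon \rho^2}{C + (1-\varepsilon)\rho^2} \;>\; 1,
\end{equation*}
where the middle step uses the standing hypothesis $b \le 1-\varepsilon$. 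Since this lower bound is independent of the chosen set $B$, Definition~\ref{def:Expansion coefficient by Ledoux} forces $\Expansion_{\Ledoux}(X; 1-\varepsilon, \rho) \ge 1 + \varepsilon\rho^2/(C + (1-\varepsilon)\rho^2) > 1$, as desired.

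The main obstacle I anticipate is justifying the bound $\int_X |\nabla f|^2\,d\mu_X \le (b-a)/\rho^2$ for a genuinely arbitrary Borel set $B$, because when $\mu_X(\partial B) > 0$ the support of the upper gradient can meet $B$ itself in a boundary layer of positive measure. I would handle this by an outer-regularity argument: replace $B$ by a slightly larger open set $U\supset B$ with $\mu_X(U) \le \mu_X(B) + \delta$, observe that $U_\rho \supset B_\rho$ and apply the above argument to $U$, then send $\delta\to 0$. The other delicate step, that the strict inequality $b/a > 1$ is retained in the limit, is baked into the explicit remainder $\varepsilon\rho^2/(C + (1-\varepsilon)\rho^2)$ which depends only on $\varepsilon$, $\rho$ and the Poincar\'e constant $C$, and in particular is bounded away from $1$ uniformly in $B$.
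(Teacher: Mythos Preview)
Your approach---feeding the distance cut-off $f=\max(0,1-d_X(\cdot,B)/\rho)$ into the Poincar\'e inequality and reading off a uniform lower bound on $\mu_X(B_\rho)/\mu_X(B)$---is precisely the argument the paper has in mind: it omits the proof and points to Ledoux's Theorem~3.1, which proceeds in exactly this way.

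The one point that needs repair is your outer-regularity patch for the gradient bound. If you choose $U\supset B$ open with $\mu_X(U)\le\mu_X(B)+\delta$ via abstract outer regularity, you lose all control of $\mu_X(U_\rho)$. In particular you can no longer assert $\mu_X(U_\rho)\le 1-\varepsilon$, and since the argument applied to $U$ only produces a lower bound on $\mu_X(U_\rho)/\mu_X(U)$ while $\mu_X(U_\rho)\ge\mu_X(B_\rho)$, there is no way to pass this back to a lower bound on $\mu_X(B_\rho)/\mu_X(B)$. The fix is simple: first replace $B$ by its closure (harmless, since $\overline{B}_\rho=B_\rho$ and $\mu_X(\overline{B})\ge\mu_X(B)$, so the ratio only gets smaller), and then take the specific open neighbourhoods $U_n=\{d_X(\cdot,B)<1/n\}$. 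For these one has both $\mu_X(U_n)\downarrow\mu_X(B)$ (because $B$ is now closed) and $(U_n)_\rho\subset B_{\rho+1/n}$, whence $\mu_X((U_n)_\rho)\to\mu_X(B_\rho)$ as well. Passing to the limit in the inequality $(a_n-b_n^2)\rho^2\le C(b_n-a_n)$, valid for each open $U_n$, gives exactly $(a-b^2)\rho^2\le C(b-a)$, and your remaining algebra goes through unchanged.
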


\begin{proof}
We omit the proof because its scenario runs almost parallel to that of Theorem~3.1 of \cite{RefLedoux2001}. 
\end{proof}




\begin{thebibliography}{99}

\bibitem{RefAmbrosio:Tilli2004}
L.~Ambrosio and P.~Tilli, \emph{Topics on analysis in metric spaces}. Oxford Lecture Series in Mathematics and its Applications. 25, Oxford University Press, Oxford, 2004. MR2039660

\bibitem{RefAmir:Milman1980}
D.~Amir and V.D.~Milman, Unconditional and symmetric sets in $n$-dimensional normed spaces, Israel J. Math. \textbf{37} (1980), no. 1-2, 3--20. MR0599298

\bibitem{RefBakry:Ledoux1996}
D.~Bakry and M.~Ledoux, Sobolev inequalities and Myers's diameter theorem for an abstract Markov generator, 
Duke Math. J. \textbf{85} 
(1996), no. 1, 253--270. MR1412446

\bibitem{RefBaudoin:Garofalo2011}
F.~Baudoin and N.~Garofalo, Perelman's entropy and doubling property on Riemannian manifolds, J. Geom. Anal. \textbf{21} (2011), no. 4, 1119--1131. MR2836593

\bibitem{RefBerger2000}
M.~Berger, Encounter with a geometer. I\hspace{-1pt}I. 
Notices Amer: Math. Soc. \textbf{47} (2000), no. 3, 326-–340. MR1740389

\bibitem{RefBobkov:Ledoux1997}
S.~Bobkov and M.~Ledoux, Poincar\'{e}'s inequalities and Talagrand's concentration phenomenon for the exponential distribution, 
Probab. Theory Related Fields. \textbf{107} (1997), no. 3, 383--400. MR1440138

\bibitem{RefCheeger1970}
J.~Cheeger, A lower bound for the smallest eigenvalues of the Laplacian, 
Problems in analysis (Papers dedicated to Salomon Bochner, 1969),
Princeton Univ. Press, Princeton, N.J. (1970), 195--199. MR0402831

\bibitem{RefCheng1975}
S.Y.~Cheng, Eigenvalue comparison theorems and its geometric applications, 
Math. Z. \textbf{143} (1975), no. 3, 289--297. MR0378001

\bibitem{RefChung:Grigoryan:Yau1996}
F.R.K.~Chung, A.~Grigor'yan and S.-T.~Yau, Upper bounds for eigenvalues of the discrete and continuous Laplace operators, 
Adv. Math. \textbf{117} (1996), no. 2, 165--178. MR1371647

\bibitem{RefChung:Grigoryan:Yau1997}
F.R.K.~Chung, A.~Grigor'yan and S.-T.~Yau, Eigenvalues and diameters for manifolds and graphs, Tsing Hua lectures on geometry \& analysis (Hsinchu, 1990--1991), Int. Press, Cambridge, MA.  (1997), 79--105. MR1482032

\bibitem{RefFriedman:Tillich2000}
J.~Friedman and J.P.~Tillich, Laplacian eigenvalues and distances between subsets of a manifold, 
J. Differential Geom. \textbf{56} (2000), no. 2, 285--299. MR1863018 

\bibitem{RefFunano2007}
K.~Funano, Observable concentration of mm-spaces into spaces with doubling measures, 
Geom. Dedicata \textbf{127} (2007), 49--56. MR2338515

\bibitem{RefFunano2010}
K.~Funano, Concentration of maps and group actions, Geom. Dedicata \textbf{149} (2010), 103--119. MR2737682 

\bibitem{RefGromov1980}
M.~Gromov, \emph{Paul L\'{e}vy's isoperimetric inequality}, 
Publ. Math. de l'IHES. (1980).

\bibitem{RefGromov2001}
M.~Gromov, \emph{Metric structures for Riemannian and non-Riemannian spaces}.
Based on the 1981 French original MR0682063. With appendices by M.~Katz, P.~Pansu and S.~Semmes. Translated from the French by Sean Michael Bates. Progress in Mathematics, 152. Birkh\"auser Boston, Inc., Boston, MA, 2001. MR1699320

\bibitem{RefGromov:Milman1983}
M.~Gromov and V.D.~Milman, A topological application of the isoperimetric inequality, 
Amer. J. Math. \textbf{105} (1983), no. 4, 843--854. MR0708367

\bibitem{RefLedoux1990}
M.~Ledoux, A remark on hypercontractivity and the concentration of measure phenomenon in a compact Riemannian manifold, 
Israel J. Math. \textbf{69} 
(1990), no. 3, 361--370. MR1049293

\bibitem{RefLedoux1992}
M.~Ledoux, A heat semigroup approach to concentration on the sphere and on a compact Riemannian manifold, 
Geom. Funct. Anal. \textbf{2} 
(1992), no. 2, 221--224. MR1159831

\bibitem{RefLedoux1996}
M.~Ledoux, \emph{Isoperimetry and Gaussian analysis}, Lecture Notes in Math. 1648, Springer, Berlin, 1996, 165--294. MR1600888

\bibitem{RefLedoux1999}
M.~Ledoux, \emph{Concentration of measure and logarithmic Sobolev inequalities}, Lecture Notes in Math. 1709, Springer, Berlin, 1999, 120--216. MR1767995

\bibitem{RefLedoux2001}
M.~Ledoux, \emph{The concentration of measure phenomenon}, Math. Surveys Monogr., 89, Amer. Math. Soc.,
2001. MR1849347

\bibitem{RefLedoux2004}
M.~Ledoux, \emph{Spectral gap, logarithmic Sobolev constant, and geometric bounds}, 
Surv. Differ. Geom. 
\textbf{IX}, 
Int. Press, Somerville, MA, 2004, 219--240. MR2195409 

\bibitem{RefLedoux:Talagrand1991}
M.~Ledoux and M.~Talagrand, \emph{Probability in Banach spaces: Isoperimetry and processes}, Springer-Verlag, Berlin, 1991. MR1102015

\bibitem{RefLevy1951}
P.~L\'{e}vy, \emph{Probl\`{e}mes concrets d'analyse fonctionnelle}, Gauthier-Villars, Paris, 1951. MR0041346

\bibitem{RefLott:Villani2007}
J.~Lott and C.~Villani, Weak curvature conditions and functional inequalities, J. Funct. Anal. \textbf{245} (2007), no. 1, 311--333. MR2311627

\bibitem{RefLott:Villani2009}
J.~Lott and C.~Villani, Ricci curvature for metric-measure spaces via optimal transport, Ann. of Math. (2) \textbf{169} (2009), no. 3, 903--991. MR2480619

\bibitem{RefMilman2009}
E.~Milman, On the role of convexity in isoperimetry, spectral gap and concentration, Invent. Math. \textbf{177} (2009), no. 1, 1--43. MR2507637

\bibitem{RefMilman1971}
V.D.~Milman, New proof of the theorem of A. Dvoretzky on intersections of convex bodies, Funct. Anal. Appl. \textbf{5} (1971), 288--295. MR0293374
\bibitem{RefMilman1988}
V.D.~Milman, The heritage of P.~L\'{e}vy in geometrical functional analysis.
Colloque Paul L\'{e}vy sur les processus stochastiques (Palaiseau, 1987),
Ast\'erisque \textbf{157--158}
(1988), 273--301. MR0976223

\bibitem{RefMilman:Schechtman1986}
V.D.~Milman and G.~Schechtman, \emph{Asymptotic theory of finite dimensional normed spaces}. With an appendix by M.~Gromov. Lecture Notes in Math. 1200, Springer-Verlag, Berlin, 1986. MR0856576

\bibitem{RefMyers1935}
S.B.~Myers, Connections between differential geometry and topology. I. Simply connected surfaces, Duke Math. J. \textbf{1} (1935), no. 3, 376--391. MR1545884

\bibitem{RefMyers1941}
S.B.~Myers, Riemannian manifolds with positive mean curvature, Duke Math. J. \textbf{8} (1941) 401--404. MR0004518 

\bibitem{RefNaor2005}
A.~Naor, Y.~Rabani and A.~Sinclair, Quasisymmetric embeddings, the observable diameter, and expansion properties of graphs, J. Funct. Anal. \textbf{227} (2005), no. 2, 273--303. MR2168076

\bibitem{RefOhta2007}
S.~Ohta, On the measure contraction property of metric measure spaces, 
Comment. Math. Helv. \textbf{82} (2007), no. 4, 805--828. MR2341840

\bibitem{RefPoincare1912}
H.~Poincar\'{e}, \emph{Calcul des probabilit\'{e}s}, Grands Class. Gauthier-Villars, Paris, 1912. MR1190693

\bibitem{RefShioya2016}
T.~Shioya, \emph{Metric measure geometry: Gromov's theory of convergence and concentration of metrics and measures}, IRMA Lectures in Mathematics and Theoretical Physics 25, EMS Publishing House, Z\"{u}rich, 2016. MR3445278

\bibitem{RefSturm2006_1}
K-T.~Sturm, On the geometry of metric measure spaces. I, Acta Math. \textbf{196} (2006), no. 1, 65--131. MR2237206

\bibitem{RefSturm2006_2}
K-T.~Sturm, On the geometry of metric measure spaces. I\hspace{-.1em}I, Acta Math. \textbf{196} (2006), no. 1, 133--177. MR2237207

\bibitem{RefVillani2009}
C.~Villani, \emph{Optimal transport: Old and new} Grundlehren der Mathematischen Wissenschaften 338, Springer-Verlag Berlin Heidelberg, 2009. MR2459454
%
%
%
%
%
%
%
%
%
%
%
%

\end{thebibliography}
\end{document}